\let\mathcal \undefined
\def\mathcal{\mathscr}
\let\emptyset \undefined
\let\ge       \undefined
\let\le       \undefined
\let\leq\le
\let\geq\ge
\theoremstyle{plain}
\newtheorem{theorem}{Theorem}[section]
\theoremstyle{remark}
\newtheorem{remark}[theorem]{Remark}
\newtheorem{example}[theorem]{Example}
\theoremstyle{plain}
\newtheorem{corollary}[theorem]{Corollary}
\newtheorem{lemma}[theorem]{Lemma}
\newtheorem{proposition}[theorem]{Proposition}
\newtheorem{definition}[theorem]{Definition}
\numberwithin{equation}{section}
\def\N{{\mathbb N}}
\def\Z{{\mathbb Z}}
\def\R{{\mathbb R}}
\def\C{{\mathbb C}}
\newcommand{\E}{{\mathbb E}}
\renewcommand{\P}{{\mathbb P}}
\def\om{\omega}
\newcommand{\eps}{\varepsilon}
\newcommand{\calL}{\mathscr{L}}
\newcommand{\n}{\Vert}
\newcommand{\one}{{{\bf 1}}}
\newcommand{\s}{^*}
\newcommand{\lb}{\langle}
\newcommand{\rb}{\rangle}
\newcommand{\limn}{\lim_{n\to\infty}}
\newcommand{\limk}{\lim_{k\to\infty}}
\newcommand{\sumnN}{\sum_{n=1}^N}
\newcommand{\Dom}{{\mathsf{D}}}
\newcommand{\Ran}{{\mathsf{R}}}
\newcommand{\Ker}{{\mathsf{N}}}
\newcommand{\wt}{\widetilde}
\newcommand{\wh}{\widehat}
\newcommand{\dd}{{\rm d}}
\newcommand{\ud}{\,{\rm d}}
\newcommand{\D}{\Dom}
\begin{document}

 \title[The Weyl calculus for group generators satisfying CCR]
 {The Weyl calculus for group generators satisfying the canonical commutation relations}

\author{Jan van Neerven}

\address{Delft Institute of Applied Mathematics\\
Delft University of Technology\\P.O. Box 5031\\2600 GA Delft\\The Netherlands}
\email{J.M.A.M.vanNeerven@tudelft.nl}

\author{Pierre Portal}
\address{The Australian National University, Mathematical Sciences Institute, John Dedman Building, 
Acton ACT 0200, Australia.}
\email{Pierre.Portal@anu.edu.au}     

\date{\today}
\keywords{Weyl pairs, canonical commutation relations, pseudo-differential calculus, twisted convolution, $C_0$-groups, transference, UMD spaces, $H^\infty$-functional calculus, spectral multipliers}

\subjclass[2000]{Primary: 47A60; Secondary: 35S05, 47A13, 47D03, 47G30, 81S05}

\thanks{The authors gratefully acknowledge financial support by the ARC Discovery Project DP160100941.}

\begin{abstract}
Classical pseudo-differential calculus on $\R^{d}$ can be viewed as a (non-commutative) functional calculus for the standard position and momentum operators $(Q_{1}, \dots , Q_{d})$ and $(P_{1}, \dots , P_{d})$. We generalise this calculus to the setting of two $d$-tuples of operators $A=(A_{1}, \dots , A_{d})$ and $B=(B_{1}, \dots , B_{d})$ acting on a Banach space $X$ such that $iA_{1}, \dots , iA_{d}$ and $iB_{1}, \dots , iB_{d}$ generate bounded $C_0$-groups 
satisfying the Weyl canonical commutation relations:
\begin{equation*}
\begin{aligned}
e^{isA_j}e^{itA_k} &= e^{itA_k}e^{isA_j},\quad e^{isB_j}e^{itB_k} = e^{itB_k}e^{isB_j}\\
      e^{isA_j}e^{itB_k} & = e^{-ist \delta_{jk}} e^{itB_k}e^{isA_j}.
\end{aligned}
\end{equation*}
We show that the resulting calculus $a\mapsto a(A,B) \in \calL(X)$, initially defined for Schwartz functions $a\in \mathscr{S}(\R^{2d})$, extends to symbols in the standard symbol class $S^{0}$ of pseudo-differential calculus provided appropriate bounds can be established.
We also prove a transference result that bounds the operators $a(A,B)$ in terms of the twisted convolution operators $C_{\wh a}$ acting on $L^{2}(\R^{2d};X)$. We apply these results to
obtain $R$-sectoriality and boundedness of the $H^{\infty}$-functional calculus (and even the H\"ormander calculus), for the abstract harmonic oscillator $L = \frac12\sum_{j=1}^d (A_j^2+B_j^2)-\frac12d$.
\bigskip

\begin{center}
{\em In memory of Alan McIntosh (1942-2016), celebrating his friendship with Jos\'e Enrique Moyal (1910-1998).}
\end{center}
\end{abstract}

\maketitle

\section{Introduction}

In the early 1980's, Alan McIntosh introduced the $H^{\infty}$-functional calculus as a refined version of the Dunford holomorphic functional calculus for unbounded sectorial operators (see the original paper \cite{McI} in the Hilbert space setting, their extensions to Banach spaces \cite{cdmy,KalWei}, and the monographs \cite{Haase, HNVW2}). This calculus is meant to be an operator-theoretic abstraction of the calculus of Fourier multipliers, which it recovers when applied to constant coefficient differential operators such as the Laplacian on $L^{2}(\R^{d})$. One of its roles is to provide a framework for perturbation theory: deriving properties of the functional calculus of differential operators with varying coefficients from its constant coefficient counterpart. The quintessential example of such an application is given in \cite{AKMc}, where perturbation is first understood in the operator-theoretic sense, then in the harmonic analytic sense of (an extension of) Calder\'on--Zygmund theory.
The combination of both perspectives leads to a striking boundedness result for the $H^{\infty}$-functional calculus of Dirac operators that includes the solution of the celebrated Kato's square root problem (originally obtained in \cite{AHLMcT}).\\

In the present paper, we introduce an operator-theoretic framework which aims to generalise pseudo-differential calculus in the same way that the McIntosh $H^{\infty}$-functional calculus generalises Fourier multiplier calculus. Our starting point is the Weyl calculus of standard position and momentum operators $Q_{j}f(x)=x_{j}f(x)$ and $P_{j}f(x) = i\partial_{j}f(x)$, $j=1,\dots,d$, acting on their natural domains in $L^{2}(\R^{d})$. For Schwartz functions $a \in \mathcal{S}(\R^{2d})$ one can define
a bounded operator $a(Q,P)$ acting on $L^2(\R^d)$ by  
\begin{equation}
\label{intro:def}
a(Q,P)f = \frac{1}{(2\pi)^{d}}
\int _{\R^{2d}} \widehat{a}(u,v) e^{i(uQ+vP)}f\ud u\ud v, \quad f\in L^2(\R^d).
\end{equation}
Here, $ e^{i(uQ+vP)}$ is understood as the Schr\"odinger representation 
\begin{equation}\label{intro:Schr} e^{i(uQ+vP)}f(x) := e^{\frac12iuv}e^{iuQ}e^{ivP}f(x) =
e^{\frac{1}{2}iuv + iux} f(x+v)
\end{equation}
which unitarily represents the canonical commutation relations for the position and momentum operators on $L^2(\R^d)$; the first identity is suggested by the Baker--Campbell-Hausdorff formula, noting that all higher commutators of $P$ and $Q$ vanish.
As shown in \cite[Proposition 1, page 554]{Stein}, \eqref{intro:def} encodes the standard pseudo-differential calculus, in the sense that for every $a\in \mathcal{S}(\R^{2d})$ there exists a unique $b \in \mathcal{S}(\R^{2d})$ such that
\begin{equation}
\label{intro:pseudodef}
a(Q,P)f(x) = \frac1{(2\pi)^{d/2}}\int _{\R^{d}} b(x,\xi)\widehat{f}(\xi)e^{i x\xi}\ud\xi,
\end{equation}
the map $a \mapsto b$ being continuous with respect to various relevant topologies. The advantage of \eqref{intro:def} over \eqref{intro:pseudodef} is that the former makes sense for generators of bounded groups on an arbitrary Banach spaces, whereas a representation such as \eqref{intro:pseudodef} is restricted to function spaces on which an appropriate Fourier transform can be defined.
We thus take \eqref{intro:def} as our starting point for the development of a general theory.\\

We work with general Weyl pairs (see Section \ref{sec:Weyl} for precise definition), i.e., two $d$-tuples 
$A=(A_{1}, \dots , A_{d})$ and $B=(B_{1}, \dots , B_{d})$ acting on a Banach space $X$ such that $iA_{1}, \dots , iA_{d}$ and $iB_{1}, \dots , iB_{d}$ generate bounded $C_0$-groups  satisfying the canonical (integrated) commutation relations
\begin{equation}
\begin{aligned}
e^{isA_j}e^{itA_k} &= e^{itA_k}e^{isA_j},\quad e^{isB_j}e^{itB_k} = e^{itB_k}e^{isB_j}\\
      e^{isA_j}e^{itB_k} & = e^{-ist \delta_{jk}} e^{itB_k}e^{isA_j}.
\end{aligned}
\end{equation}
In this context, \eqref{intro:Schr} is replaced by 
$$  e^{i(uA+vB)} := e^{\frac12iuv}e^{iuA}e^{ivB} := e^{\frac12iuv}\prod_{j=1}^d e^{iu_jA_j}\prod_{k=1}^d e^{iv_kB_k}.$$ 
The analogue of \eqref{intro:def},
\begin{equation}
\label{intro:def2}
a(A,B)f = \frac{1}{(2\pi)^{d}}
\int _{\R^{2d}} \widehat{a}(u,v) e^{i(uA+vB)}f\ud u\ud v, \quad f\in X,
\end{equation}
defines an algebra homomorphism between $\mathcal{S}(\R^{2d})$ endowed with the (non-comm\-ut\-ative) Moyal product
\begin{equation*}
\begin{aligned} \ & (a\,\#\,b)(x,\xi) \\ & \ \ = 
\frac{1}{\pi^{2d}} \int_{\R^{2d}}\int_{\R^{2d}} a(x+u, \xi+u') b(x+v, \xi+v') e^{-2i(vu'-uv')}\ud u\ud u'\ud v\ud v'
\end{aligned}
\end{equation*}
into the space of bounded linear operators $\calL(X)$. The Moyal product is used in pseudo-differential operator theory to deal with composition of symbols. In Section \ref{sec:Weyl}, we show that if this algebra homomorphism is continuous from $\mathcal{S}(\R^{2d})$ endowed with the topology of the standard pseudo-differential class of symbols $S^{0}$ to $\calL(X)$, then the calculus can be meaningfully extended from $\mathcal{S}(\R^{2d})$ to $S^{0}$. This is an analogue of the fundamental convergence lemma in the theory of $H^{\infty}$-functional calculus (see, e.g. \cite[Proposition 10.2.11]{HNVW2}), and is proved using asymptotic expansions of the Moyal product, typical of pseudo-differential calculus. Having such a convergence lemma shows that a pseudo-differential calculus for $(A,B)$ can be defined as soon as appropriate bounds on the operators defined in \eqref{intro:def2} are obtained.\\

One of the applications of pseudo-differential calculus is to study Schr\"odinger operators such as the harmonic oscillator defined by $\frac12\Delta f(x)- \frac12|x|^2f(x)$ on $L^{2}(\R^{d})$. In our abstract situation, we show that it is possible to express, in Section \ref{sec:A2B2}, the semigroup generated by 
\begin{equation}\label{eq:defL} -L:= \frac12d - \frac12\sum_{j=1}^d (A_j^{2}+B_j^{2})
\end{equation} 
in terms of the Weyl calculus as  
\begin{equation}\label{eq:exptL}e^{-tL} = a_t(A,B),
\end{equation}
 where $a_t\in \mathscr{S}(\R^{2d})$ is the function
\begin{align*} a_{t}(x,\xi) := \Big(1+\frac{1-e^{-t}}{1+e^{-t}}\Big)^{d}\exp\Bigl(-\frac{1-e^{-t}}{1+e^{-t}}(|x|^{2}+|\xi|^{2})\Bigr).
\end{align*}
For the pair of position and momentum operators associated with the Ornstein--Uhlenbeck operator (see Example \ref{ex:OU}), \eqref{eq:exptL} is a well known formula for the Orn\-stein--Uhlenbeck semigroup which goes back, at least, to \cite{unter}; see also \cite{NP}, where this formula was rediscovered by a reduction to Mehler's formula.
Here we show, with a different proof, that it generally holds for the operators $L$ associated with Weyl pairs through \eqref{eq:defL}. As such, \eqref{eq:exptL} can be thought of as an abstract analogue of Mehler's formula for Weyl pairs.

To obtain useful bounds for various functions of $L$ we use, in Section \ref{sec:transf}, the idea of transference to derive bounds for $a(A,B)$ acting on $X$ from corresponding bounds on the twisted convolution with $\wh a$, viewed as an operator acting on 
$L^{p}(\R^{2d};X)$. This idea can be traced back to Coifman and Weiss \cite{CW} and the form used here is inspired by the work of Hieber and Pr\"uss \cite{HiePru}, Haase \cite{H},
and Haase and Rozendaal \cite{HR}.
They have shown that bounds on the Phillips functional calculus defined, for a generator $iG$ of a bounded $C_0$-group acting on a Banach space $X$, by
$$
a(G)f = \frac{1}{\sqrt{2\pi}} 
\int _{\R} \widehat{a}(u)e^{iuG} f\ud u,
$$
can be obtained from bounds on convolution operators acting on $L^{2}(\R;X)$. The latter can then be proven using, for instance, Bourgain's UMD-valued Fourier multiplier theorem \cite{Bour86}, or its analogue for operator-valued kernels proven by Weis in \cite{Wei}.

For twisted convolutions, however, no UMD-valued theory is yet available. Developing such a theory 
is bound to be difficult, given that the (scalar-valued) $L^p$-theory of twisted convolutions, as developed by Mauceri in \cite{mauceri80}, is already subtle (see also \cite{MPR}). For applications to spectral multipliers theorems for $L$, fortunately, we only need to handle highly specific twisted convolutions that can effectively be ``untwisted''. This is shown in Section \ref{sec:untwist}, 
where we prove $R$-sectoriality 
for the operator $L$ defined by \eqref{eq:defL}   
in UMD lattices $X$. In Section \ref{sec:Hinfty},
we use this result to deduce the boundedness of the $H^\infty$-calculus of $L$ 
on UMD lattices $X$ from the boundedness of the Weyl calculus of $(A,B)$. We also show that the angle of this calculus is best possible (namely $0$). Going even further, we apply the recent Kriegler--Weis approach to spectral multipliers developed in \cite{Krieg,KriegW} to show that this $H^{\infty}$-calculus can in fact be extended to a H\"ormander class of sufficiently smooth but not necessarily analytic functions. This is possible because the estimates obtained in Section \ref{sec:untwist} are precise enough for us to check the assumption of \cite{KriegW}. \\

The present paper provides a foundation for a generalised pseudo-differential operator theory in at least three directions: Witten pseudo-differential calculus, global pseudo-differential calculus on Lie groups, and rough pseudo-differential calculus. 
In the Witten pseudo-differential calculus, one is interested in pairs $(A,B)$ acting on $L^{p}(\R^d,e^{-\phi(x)}\ud x)$, such that, informally, the ``Witten Laplacian'' $L$ is of the form $h(A,B)$ for an appropriate ``Hamiltonian''  $h$ which is chosen so that the measure $e^{-\phi(x)}\ud x$ is an invariant measure for $L$. 
We started such a theory in \cite{NP} in the most classical case where the choice $\phi(x)=\frac12|x|^{2}$ brings us back to the Gaussian setting and $L$ reduces to the Ornstein--Uhlenbeck operator. In work in progress, some of the results proven in the present paper 
are applied to extend the functional calculus theory of the Ornstein--Uhlenbeck operator in \cite{GMMST}. 

From the Lie group point of view, the present paper can be seen as an approach to (sub)pseudo-differential calculus on $L^p(H)$, where $H$ is the Heisenberg group. The prefix ``sub" here indicates that we consider a pseudo-differential calculus that extends the Fourier multiplier calculus given by the functional calculus of the sub-Laplacian (removing this prefix by extending the present paper to add $\partial_{t}$ to the joint functional calculus of the Weyl pair $(X,Y)$, in the spirit of \cite{stric}, would be interesting).
In the setting where $X$ is an $L^p$-space, a Lie group representation approach to some of the results in Section \ref{sec:A2B2} 
has already been pursued in \cite{DG,DGE} for more general higher-order commutator relations; see also 
\cite{ElstRob94, ElstRob98}. Building on earlier work in \cite{ElstRob94}, in the setting of $L^p$-spaces 
the boundedness of the $H^\infty$-calculus of $\eps+L$ for $\eps>0$ has been proved in \cite{Smul} by more direct transference arguments.
The present operator-theoretic perspective could help construct global pseudo-differential calculi on nilpotent Lie groups. 
Such a theory is currently being developed by Ruzhansky, Fischer, and their collaborators (see, in particular, \cite{FR}). The theory of pseudo-differential operators on the Heisenberg group is developed 
in \cite{BFKG}.

Last but not least, we aim to perturb the Weyl calculus, both from an operator-theoretic and a harmonic analytic perspective, to eventually treat pairs of the form
\begin{align*}
Q_{B,j}f(x) &= \frac{1}{2}\Bigl(\partial_{j}f(x)+x_{j}f(x) - \sum _{k=1} ^{d}\beta_{kj}(x)(\partial_{k}f(x)-x_{k}f(x))\Bigr), \\
P_{B,j}f(x) &= \frac{1}{2i}\Bigl(\partial_{j}f(x)+x_{j}f(x) + \sum _{k=1} ^{d}\beta_{kj}(x)(\partial_{k}f(x)-x_{k}f(x))\Bigr),
 \end{align*}
 where both the matrix $B=(\beta_{kj})_{k,j=1}^d$ and its inverse have bounded measurable coefficients. Notice that we recover the standard pair with $B=I$.
These are analogues of the perturbations of Dirac operators considered in \cite{AKMc}. Since the latter can be interpreted as a
rough Fourier multiplier theory, a corresponding theory for $(Q_{B},P_{B})$ could be interpreted as a rough pseudo-differential calculus.\\

\noindent {\em Acknowledgment} 
We thank Tom ter Elst, Markus Haase, Sean Harris, and Javier Parcet for interesting discussions. We dedicate this paper to the memory of Alan McIntosh (1942-2016). His philosophy of using operator theory as a mean to extend harmonic analysis towards rougher settings, very much underpins the present research. 
Alan McIntosh was a close friend of Joe Moyal, whose phase space perspective on quantum mechanics gives the non-commutative structure on appropriate algebra of functions that we use here to extend McIntosh's (commutative) functional calculus. 
We thus like to think of the present paper as establishing a posthumous connection between the works of these two friends.\\

{\em Notation and conventions.} All vector spaces are complex unless the contrary is stated. 
To be in line with standard notation in pseudo-differential calculus, we reserve the notation $(x,\xi)$ for the general point in $\R^{2d} = \R^d\times\R^d$. 
Because most applications are concerned with function spaces anyway, general elements in a Banach space $X$ will be denoted by $f,g,\dots$.
For $\xi\in \R^d$ we write $\lb \xi\rb = (1+|\xi|^2)^{1/2}$. 
Standard multi-index notation is used. We let $\N = \{0,1,2,\dots\}$.

When $A=(A_1,\dots,A_d)$ and $B = (B_1,\dots,B_d)$ 
are $d$-tuples of linear operators with domains $\Dom(A_j)$ and $\Dom(B_j)$ respectively, we set $\D(A) = \bigcap_{j=1}^d \D(A_j)$ and $\D(B)=\bigcap_{j=1}^d \D(B_j).$
For $u,v\in \R^d$ we write
$uv:= \sum_{j=1}^d u_jv_j$ and define the operators $uA$ and $vB$,  with domains $\Dom(A)$ and $\Dom(B)$ respectively, by
$$uA= \sum_{j=1}^d u_jA_j, \quad vB= \sum_{j=1}^d v_jB_j. $$

We write $a\lesssim_{p_1,p_2,\dots} b$ to express that there exists a constant $C$, 
depending on the data $p_1,p_2,\dots$, but not on any other relevant data,
such that $a\le Cb$. If the  constant is independent of all relevant data we write $a\lesssim b$,

\section{Preliminaries}

We assume familiarity with the basic theory of pseudo-differential operators and semigroup theory. Good sources for our purposes are \cite{abels, Stein} and \cite{EngNag}.
Here we collect some terminology and results concerning UMD spaces, 
$R$-boundedness, and the $H^\infty$-calculus of sectorial operators. Our main references are \cite{HNVW1, HNVW2}; 
other sources for these notions are, respectively, \cite{Pisier}, \cite{DHP, KunWei}, \cite{Haase, Haase-ISEM, KunWei}. 

\subsection{UMD spaces}

A Banach space $X$ is said to have the {\em UMD$_p$ property}, where $1<p<\infty$, if there exists a finite constant $C\ge 0$ such that whenever $(m_n)_{n=1}^N$ is a finite $X$-valued martingale (defined on a measure space which may vary from case to case and whose length $N$ may vary as well) and $(\epsilon_n)_{n=1}^N$ is a sequence of scalars of modulus one, we have 
$$ \E \Big\n \sumnN \epsilon_n m_n \Big\n^p \le C^p  \E \Big\n \sumnN m_n \Big\n^p.$$ 
It can be shown that if $X$ has the UMD$_p$ property for some $1<p<\infty$, then it has this property for all $1<p<\infty$. Accordingly it makes sense to call a Banach space a {\em UMD space} if it has the UMD$_p$ property for some (equivalently, for all) $1<p<\infty$.

In some treatments only scalars $\epsilon_n = \pm 1$ are used. This leads to an equivalent 
definition, the only difference being that the numerical value of the constant may change (see \cite[Proposition 4.2.10]{HNVW1}). 

The importance of the class of UMD spaces derives from a celebrated theorem due Burk\-holder and Bourgain 
\cite{Bour83, Burk83} which characterises it as precisely the class of Banach spaces $X$ for which the Hilbert transform extends to a bounded operator on $L^p(\R;X)$ for some (equivalently, for all) $1<p<\infty$. This, in turn, allows one to prove the boundedness in $L^p(\R^d;X)$ 
of very general classes of singular integral operators. For some of the sharpest results presently available see \cite{Hyt-vvTb}. In particular 
every Calder\'on--Zygmund operator with a kernel satisfying the so-called ``standard estimates'' is bounded on $L^p(\R^d;X)$ for all UMD spaces $X$ and exponents $1 < p < \infty$.

Examples of UMD spaces include Hilbert spaces, the $L^p$-spaces with $1<p<\infty$, and the Schatten classes $\mathscr{C}_p$ with $1<p<\infty$. The class of UMD spaces is stable under passing to equivalent norms and taking closed subspaces, quotients, and $\ell^p$-direct sums. If $X$ is UMD and $1<p<\infty$, then also $L^p(M,\mu;X)$ 
is UMD, for any measure space $(M,\mu)$. As a consequence, all ``classical'' function spaces used in Analysis such as Sobolev spaces, Besov spaces, and Triebel--Lizorkin spaces are UMD as long as the exponents in their definitions are within the reflexive range. UMD spaces are reflexive, and therefore spaces such as $c_0$, $\ell^1$, $\ell^\infty$, $C(K)$, $L^1(M,\mu)$, $L^\infty(M,\mu)$
are not UMD (with exception of the trivial cases when the latter three are finite-dimensional).

\subsection{$R$-boundedness}
A {\em Rademacher sequence} is a sequence of independent random variables $(\eps_n)_{n=1}^\infty$, defined on some probability space, the values of which are uniformly distributed in the set of scalars of modulus one. Thus if the scalar field is real, Rademacher variables take values $\pm 1$ with
with equal probability $\frac12$, and if the scalar field is complex their values are
uniformly distributed in the unit circle in the complex plane.

Let $X$ and $Y$ be Banach spaces and let $\calL(X,Y)$ denote the space of all bounded linear operators from $X$ into $Y$. A subset $\mathscr{T}$ of $\calL(X,Y)$ is said to be {\em $R_p$-bounded}, where $0<p<\infty$, if there exists a finite constant $C\ge 0$ such that for all finite sequences
$T_1,\dots,T_N\in \mathscr{T}$ and $x_1,\dots,x_N\in X$ (where $N$ may vary) one has
$$ \E \Big\n \sumnN \eps_n T_n x_n\Big\n^p \le C^p \E \Big\n \sumnN \eps_n x_n\Big\n^p.$$ 
The least admissible constant $C$ is called the {\em $R_p$-bound} of $\mathscr{T}$ and is denoted by $\mathscr{R}_p(\mathscr{T})$. 

By the Kahane--Khintchine inequality (see \cite[Theorem 6.2.4]{HNVW2}), if $\mathscr{T}$ is $R_p$-bounded for some
$0<p<\infty$, then it is $R_p$-bounded for all $0<p<\infty$, and for all $0<p<\infty$ we have
$$ \mathscr{R}_p(\mathscr{T})\eqsim_{p} \mathscr{R}(\mathscr{T}),$$
where by default we write $\mathscr{R}(\mathscr{T}):= \mathscr{R}_2(\mathscr{T})$.
Accordingly it makes sense to call $\mathscr{T}$ {\em $R$-bounded} if it is $R$-bounded for some (equivalently, for all) $0<p<\infty$. 

In some treatments real-valued Rademacher variables (random variables taking the values $\pm 1$ with equal probability) are used. This leads to an equivalent 
definition, the only difference being that the numerical value of the $R$-bounds 
may change (see \cite[Proposition 6.1.9]{HNVW2}). Upon replacing the role of Rademacher variables by Gaussian variables,
one arrives at the notion of {\em $\gamma$-boundedness}. 
Every $R$-bounded set of operators is $\gamma$-bounded (by a simple randomisation argument, see \cite[Theorem 8.1.3]{HNVW2}), and every $\gamma$-bounded set is uniformly bounded (take $N=1$). If $X$ has finite cotype,
every $\gamma$-bounded family in $\calL(X,Y)$ is $R$-bounded, 
and if $X$ has cotype $2$ and $Y$ has type $2$ (in particular, if $X$ and $Y$ are isomorphic to Hilbert spaces),
then every uniformly bounded family in $\calL(X,Y)$ is $R$-bounded (see \cite[Theorem 8.1.3]{HNVW2}).
The Kahane contraction principle (see \cite[Theorem 6.1.13]{HNVW2}) implies that 
bounded subsets of the scalar field, viewed as bounded operators on a Banach space $X$ though scalar multiplication, are $R$-bounded. $R$-Bounded sets enjoy many permanence properties; in particular they are closed under taking convex hulls and weak operator closure (see \cite[Sections 8.1.e, 8.4.a, 8.5.a]{HNVW2}).

The notion of $R$-boundedness originates from Harmonic Analysis, where it captures the essence of so-called ``square function estimates''. As such it goes back to the works 
\cite{BG, Bour86}; its first systematic study is \cite{CPSW}.
Rather than explaining this aspect in full detail (for this we refer to \cite[Chapter 8]{HNVW2}) we mention (see \cite[Proposition 6.3.3]{HNVW2}) that if $X = L^q(M,\mu)$ with $1\le q<\infty$, then for all $0<p<\infty$
one has the equivalence of norms 
$$ \Big(\E \Big\n \sumnN \eps_n f_n\Big\n_{L^q(M,\mu)}^p\Big)^{1/p}
\eqsim_{p,q} \Big\n \Big( \sumnN |f_n|^2\Big)^{1/2}\Big\n_{L^q(M,\mu)}
$$
with implied constants that depend only on $p$ and $q$. Thus, in the context of $L^q$-spaces,
$R$-boundedness reduces to a square function estimate.

\subsection{$H^\infty$-calculus}\label{subsec:Hinfty}

Let $X$ be a Banach space and let $0<\sigma<\pi$. 
A closed operator $L: \Dom(L)\supseteq X\to X$ (with $\Dom(L)$ the {\em domain} of $L$)
is said to be {\em $\sigma$-sectorial} if its spectrum is contained in the closure of the sector
$$\Sigma_\sigma = \{z\in \C:\ z\not=0, \ |\arg z| < \sigma\}$$
(arguments are taken in $(-\pi,-\pi)$) and satisfies
\begin{align}\label{eq:sectorial}
\n R(z,L)\n \le \frac{M}{|z|}
\end{align}
on the complement of $\overline{\Sigma_\sigma}$, for some finite constant $M\ge 0$. Here, $R(z,L) := (z-L)^{-1}$
is the resolvent operator. An operator is said to be {\em sectorial} if it is $\sigma$-sectorial for some $0<\sigma<\pi$. The number
$$\om(L):= \inf\bigl\{\sigma\in (0,\pi): \ \hbox{$L$ is $\sigma$-sectorial}\big\}$$
is called the {\em angle of sectoriality of $L$}.

For $0<\theta<\pi$ let $H^1(\Sigma_\theta)$ be the Banach space of all holomorphic functions $\phi:\Sigma_\theta\to \C$ satisfying
$$ \n \phi\n_{H^1(\Sigma_\theta)}:= \sup_{0<\nu<\theta} \frac1{2\pi} \int_{\partial \Sigma_\nu} |\phi(z)|\,\frac{|{\rm d}z|}{|z|} <\infty.$$
If $L$ is $\sigma$-sectorial, then for any $\phi\in H^1(\Sigma_\theta)$ with $\sigma<\theta<\pi$ we may define 
\begin{equation}\label{eq:Dunford} \phi(L) :=  \frac1{2\pi i} \int_{\partial \Sigma_\nu} \phi(z) R(z,L)\, {\rm d}z,
\end{equation}
taking $\sigma<\nu<\theta$ with the understanding that $\partial \Sigma_\nu$ is downwards oriented. This integral converges absolutely and defines 
a bounded operator of norm at most $M\n \phi\n_{H^1(\Sigma_\theta)}$, where $M$ is the constant of \eqref{eq:sectorial}. It is a consequence of Cauchy's theoremand \cite[Proposition H.2.5]{HNVW2} that 
the definition of $\phi(L)$ is independent of the choice of the angle $\nu$.

If we were to replace the role of $H^1(\Sigma_\theta)$ by 
the space $H^\infty(\Sigma_\theta)$ of all bounded holomorphic functions on $\Sigma_\theta$, 
we would run into the difficulty that the corresponding Dunford integral in \eqref{eq:Dunford} becomes singular at both the origin and at infinity. To handle this situation\, a sectorial
operator $L$ is said to have a {\em bounded $H^\infty(\Sigma_\sigma)$-calculus}, where $\omega(L)<\sigma<\pi$, if there exists a finite constant $K\ge 0$
such that 
$$ \n \phi(L)\n \le K \n \phi\n_{H^\infty(\Sigma_\sigma)}$$
for all $\phi\in H^1(\Sigma_\sigma)\cap H^\infty(\Sigma_\sigma)$.
A sectorial operator $L$ is said to have a {\em bounded $H^\infty$-calculus} if it has a bounded $H^\infty(\Sigma_\sigma)$-calculus for some $\omega(L)<\sigma<\pi$.
The number
$$\om_{H^\infty}(L):= \inf\bigl\{\sigma\in (\omega(L),\pi): \ \hbox{$L$ has a bounded $H^\infty(\Sigma_\sigma)$-calculus}\bigr\}
$$
is called the {\em angle of the $H^\infty$-calculus} of $L$.

If $L$ is densely defined, has dense range, and has a bounded $H^\infty(\Sigma_\sigma)$-calculus, the {\em McIntosh convergence lemma} \cite{McI} (see also \cite[Theorem 10.2.13]{HNVW2}) allows one to uniquely define, for every $\phi\in H^\infty(\Sigma_\sigma)$, a bounded operator $\phi(L)$ by
$$\phi(L)f:= \limn \phi_n(L)f, \quad f\in X,$$
where $(\phi_n)_{n\ge 1}$ is any sequence in $H^1(\Sigma_\sigma)\cap H^\infty(\Sigma_\sigma)$
that is uniformly bounded and converges to $\phi$ pointwise on $\Sigma_\sigma$.

The prime example of a sectorial operator with a bounded $H^\infty$-calculus (of angle $0$)
is the negative Laplacian $L = -\Delta$ on $L^p(\R^d;X)$ for any UMD space $X$ and $1<p<\infty$. More generally, under minor regularity assumptions on the coefficients, uniformly elliptic operators on sufficiently regular domains $D$ in $\R^d$ have bounded $H^\infty$-calculi on $L^p(D;X)$ under various boundary conditions. Examples with precise formulations are reviewed in \cite{DHP, HNVW2, KunWei}.

There is an interesting interplay between $R$-boundedness and $H^\infty$-calculi.
Let us say that a closed operator $L$ is {\em $\sigma$-$R$-sectorial}
if $\sigma(L)$ is contained in $\overline{\Sigma_\sigma}$ and
the set $$\{z R(z,L): \ z\in \complement\overline{\Sigma_\sigma}\big\}$$ is $R$-bounded.
Since $R$-boundedness implies boundedness, every $\sigma$-$R$-sectorial is $\sigma$-sectorial.
The operator $L$ is said to be {\em $R$-sectorial} if it is $\sigma$-$R$-sectorial
for some $0<\sigma<\pi$. The infimum
$$ \omega_R(L) := \inf\big\{\sigma\in (\omega(L),\pi): \ \hbox{$L$ is $\sigma$-$R$-sectorial}\big\}$$
is called the {\em angle of $R$-sectoriality} of $L$.
It was shown by Kalton and Weis \cite{KalWei} (see also \cite[Corollary 10.4.10]{HNVW2}) that if $L$ is a sectorial operator with a bounded $H^\infty$-calculus
on a UMD Banach space $X$ (actually a slightly weaker assumption will do for this purpose, but this is not relevant to us here), then $L$ is $R$-sectorial and we have 
\begin{equation}\label{eq:KW-Hinfty} \om_{R}(L) = \om_{H^\infty}(L).
\end{equation}
In this context it is interesting to observe that for $R$-sectorial operators $L$ it may happen that $\omega_R(L)> \omega(L)$; see \cite{KLW}.

\section{Weyl pairs}\label{sec:Weyl}

Let $A=(A_1,\dots,A_d)$ and $B = (B_1,\dots,B_d)$ 
be two $d$-tuples of closed and densely defined operators acting in a complex
Banach space $X$. We assume that each of the operators $iA_j$ and $iB_j$ generates a uniformly 
bounded $C_0$-group on $X$. We denote these groups by  $(e^{itA_j})_{t\in \R}$ 
and $(e^{itB_j})_{t\in \R}$, respectively. 

\begin{definition}
Under the above assumptions, the pair $(A,B)$ will be called a {\em Weyl pair} of dimension $d$ if the (integrated) {\em canonical commutation relations} hold for all $s,t\in \R$ and $1\le j,k\le d$:
\begin{equation}\label{eq:CCR}
\begin{aligned}
e^{isA_j}e^{itA_k} &= e^{itA_k}e^{isA_j}\\
e^{isB_j}e^{itB_k} &= e^{itB_k}e^{isB_j}\\
      e^{isA_j}e^{itB_k} & = e^{-ist \delta_{jk}} e^{itB_k}e^{isA_j} 
\end{aligned}
\end{equation}
where $\delta_{jk}$ is the usual Kronecker symbol.
\end{definition}

Being a Weyl pair is an isomorphic notion, in that it is insensitive to changing to an equivalent norm. More generally, if $(A,B)$ is a Weyl pair on $X$ and $T:X\to Y$ is an isomorphism of Banach spaces,  then $(TAT^{-1}, TBT^{-1})$ is a Weyl pair on $Y$. 
This is of course trivial, but it is of some interest in connection with the next example, for on Hilbert spaces it easily provides examples of non-selfadjoint Weyl pairs. 
 
\begin{example}[Standard position/momentum pair]\label{ex:HO} On $L^p(\R^d)$, $1\le p<\infty$, the position and momentum operators 
$Q_j$ and $P_j$, $1\le j\le d$, are defined by 
\begin{align*}
Q_jf(x) = x_jf(x), \quad P_jf(x) = \frac1i \partial_j f(x), \quad x\in \R^d.
\end{align*}
With their natural domains, it is easily checked that they define a Weyl pair
$(Q,P)$. Indeed, $iQ_j$ generates the multiplication group on $L^p(\R^d)$ given by $$e^{itQ_j}g(x) =  e^{itx}g(x), \quad x\in \R^d, \ t\in \R,$$
and
$iP_j$ generates the translation group on $L^{p}(\R^d)$ given by
$$e^{itP_j}g(x) = g(x+te_j), \quad x\in\R^d, \ t\in \R,$$ with $e_j$ the $j$-th unit vector of $\R^d$.
The commutation relations are easily checked.

The position/momentum pair is sometimes referred to as the {\em standard pair} and provides the main example of a Weyl pair. A well-known uniqueness result of Stone and von Neumann (see, e.g., \cite[Chapter 14]{Hall} or \cite[Section 4.3]{Put}) asserts that every Weyl pair of dimension $d$ of self-adjoint operators in a Hilbert space is unitarily equivalent to a direct sum of copies of standard pairs on $L^2(\R^d)$. 
\end{example}

\begin{example}[Gaussian position/momentum pair]\label{ex:OU} 
Let us denote by $\gamma$ the standard Gaussian measure
on $\R^d$. On $L^p(\R^d,\gamma)$, $1\le p<\infty$, we consider 
the position and momentum pair $(Q^\gamma,P^\gamma)$ given by $Q^\gamma = (Q^\gamma_1,\dots,Q^\gamma_d)$ and $P^\gamma = (P^\gamma_1,\dots,P^\gamma_d)$
defined by 
$$ Q^\gamma_j :=\frac1{\sqrt2}(a_j + a^\dagger_j), \quad P^\gamma_j :=\frac1{i\sqrt{2}}(a_j - a_j^\dagger),$$
where the annihilation and creation operators $a_j$ and $a_j^\dagger$ are defined by 
$$ a_j = \partial_j, \quad a_j^\dagger = -\partial_j+x_j.$$
Thus, for $f\in C_{\rm c}^1(\R^d)$,
$$Q^\gamma_jf(x) = \frac1{\sqrt 2} x_j f(x), \quad P^\gamma_jf(x) = \frac1{i\sqrt 2}(2\partial_j - x_j) f(x).$$
It is readily verified that the pair $(Q^\gamma,P^\gamma)$ satisfies the canonical commutation relations.
As we will explain in a moment, for $p=2$ this pair is unitarily equivalent to the standard pair.

It is clear that the operators $iQ^\gamma_{j}$ generate 
$C_0$-contraction groups of multiplication operators on $L^p(\R^d,\gamma)$ for all $1\le p<\infty$. 
On the other hand, the operators $iP^\gamma_{j}$ generate bounded $C_0$-groups on $L^p(\R^d,\gamma)$ if and only if $p=2$. 
Thus $(Q^\gamma,P^\gamma)$ is a Weyl pair on  $L^p(\R^d,\gamma)$ if and only if $p=2$.
This can be deduced from Theorem \ref{thm:A2B2} below as follows. By a result of \cite{NP}, in $L^2(\R^d,\gamma)$ the operator $\frac12((Q^\gamma)^2+(P^\gamma)^2)-\frac12d$ considered in Theorem \ref{thm:A2B2} 
is the Ornstein--Uhlenbeck operator. If $(Q^\gamma,P^\gamma)$ were to be a Weyl pair in 
$L^p(\R^d,\gamma)$ for certain $p\in (1,\infty)\setminus\{2\}$, the theorem would imply that the Ornstein--Uhlenbeck semigroup extends 
holomorphically to the right half-plane $\{\Re z>0\}$, and this is well known to be false.
In fact the optimal angle $\theta_p$ of holomorphy for the Ornstein--Uhlenbeck semigroup 
on $L^p(\R^d,\gamma)$ is known to be
$\cos\theta_p = \frac{|p-2|}{2\sqrt{p-1}}$ (see \cite{GMMST}).

The failure of $iP^\gamma_j$ to generate a bounded $C_0$-group on $L^p(\R^d,\gamma)$ for $p\not=2$ can also be easily checked by hand.
Let $m(\dd x) = \dd x/(2\pi)^{d/2}$ denote the normalised Lebesgue measure on $\R^d$. 
On $L^2(\R^d,\gamma)$ the group generated by $iP^\gamma_j$ is given by 
$e^{itP^\gamma_j} = U^{-1}T_j(t)U$, where $T_j(t)$ is the translation group on $L^2(\R^d,m)$
in the $j$-th direction
and $U : L^2(\R^d,\gamma) \to L^2(\R^d,m)$ is the unitary mapping given by
$U = \delta\circ E$ with $$ Ef(x) = e^{-\frac14|x|^2}f(x), \quad \delta f(x) := (\sqrt 2)^d f\bigl({\sqrt 2}x\bigr).$$
An easy computation shows that, in $L^2(\R^d,\gamma)$, the operators  $e^{itP^\gamma_j}$ are given by 
\begin{align*}
  e^{itP^\gamma_j}f(x) = e^{\frac14|x|^2 - \frac12(\frac{x}{\sqrt 2}-t)^2}f(x+t\sqrt 2). 
\end{align*}
Then, after an integration and change of variable,
$$ \n e^{itP^\gamma_j}f\n_p^p = \frac{1}{(2\pi)^{d/2}}\int_{\R^d} e^{(\frac12-\frac{p}{4})(2{\sqrt 2}xt -2t^2)}|f(x)|^p\ud \gamma(x).$$
For $p\in [1,2)$ it follows that $e^{itP^\gamma_j}$ fails to extend to a bounded operator on $L^p(\R^d,\gamma)$ for all $t>0$,
and for $p\in (2,\infty)$ the operators $e^{itP^\gamma_j}$ are bounded on $L^p(\R^d,\gamma)$, but not uniformly bounded as a function of $t>0$. 
\end{example}

\begin{example}[Modified Gaussian position/momentum pair]
It is of some interest to note that the pair $(Q^\gamma,P^\gamma)$ of the previous example does form a Weyl pair on $L^p(\R^d,\gamma_{2/p})$ for all $p\in [1,2]$, where
$\gamma_\tau(\dd x) = (2\pi\tau)^{-d/2}e^{-|x|^2/2\tau}\ud x$. 
This is simply because with this scaling of the measure the mapping $U$ considered above defines an isomorphism from $L^p(\R^d,\gamma_{2/p})$ onto $L^p(\R^d,m)$.
Then each $iP^\gamma_j$ generates a bounded $C_0$-group on $L^p(\R^d,\gamma_{2/p})$ which, 
under $U$, conjugates with the translation group in the $j$-th direction on $L^p(\R^d,m)$.
\end{example}

\begin{example}[Duality]
If $(A,B)$ is a Weyl pair in $X$, then the pair of adjoint operators $(B^*,A^*)$ 
is a Weyl pair in $X^{*}$ provided the operators $A_j^*$ and $B_j^*$ are densely defined (by a classical result in semigroup theory (see \cite[Proposition I.5.14]{EngNag}) this is always the case if $X$ is reflexive).
\end{example}

\begin{example}[Additive commuting perturbations]
 If $(A,B)$ is a Weyl pair and 
 $C$ is a bounded operator resolvent commuting with $A$, then 
 $(A,B+C)$ is a Weyl pair whenever the group generated by $i(B+C)$ is bounded. Indeed, the assumption implies that $C$ commutes with the operators $e^{itA}$, and the commutation relations \eqref{eq:CCR} follow from this by going through the standard proof of the variation of constants formula for perturbed (semi)groups using Picard iteration.
 The simplest example is obtained by taking $C = \omega I$ with $\omega\in\R$. This amounts to frequency modulating the group generated by $iB$. More generally one could take $C$ to 
 be any densely defined closed operator such that $iC$ generates a bounded group commuting with the group generated by $iA$. 
 
 Similarly,  if $(A,B)$ is a Weyl pair and $C$ is a bounded operator commuting with the resolvent of $B$, then $(A+C,B)$ is a Weyl pair whenever the group generated by $i(A+C)$ is bounded.
\end{example}

\begin{example}[Skew transforms] If $(A,B)$ is a Weyl pair, then for every $\lambda\in \R$ the pair
 $(A,\lambda A+B)$ is a Weyl pair. Some care has to be taken with the interpretation of 
 $\lambda A+B$; we interpret it as the generator of the $C_0$-group given by
 $$ e^{it(\lambda A+B)}:= e^{\frac12 i\lambda t^2}e^{i\lambda t A}e^{it B} $$
(this idea will be further developed in a moment). Similarly, if $(A,B)$ is a Weyl pair, then for every $\lambda \in \R$ the pair
 $(A+\lambda B,B)$ is a Weyl pair. 
\end{example}

\begin{example}
Let $((Q_1,Q_2), (P_1,P_2))$ be the standard pair of dimension $2d$ on $L^2(\R^{2d}),$ i.e.,
\begin{align*}
Q_{1,j}f(x,\xi) = x_jf(x,\xi), \quad & Q_{2,j}f(x,\xi) = \xi_jf(x,\xi), \\ 
P_{1,j}f(x,\xi) = \frac1i \frac{\partial f}{\partial x_j}(x,\xi),\quad   &P_{2,j}f(x,\xi) = \frac1i \frac{\partial f}{\partial \xi_j}(x,\xi),
\end{align*}
for $1\le j\le d$. Reasoning as in the preceding examples, we see that $(-\frac12 Q_2-P_1,\frac12Q_1-P_2)$ is a Weyl pair of dimension $d$ on $L^2(\R^{2d})$. As we show in Lemma \ref{lem:twiststand}, the Weyl calculus of this pair encodes twisted convolutions. 
Many variations on twisted convolutions can be considered through the Weyl calculus of twisted standard pairs obtained from different twists than the one above.

\end{example}

\begin{example}[Quantum variables]
In \cite{qes}, Gonz\'alez-P\'erez, Junge, and Parcet, introduce a (non-commutative) Fourier transform, as well as position and momentum operators,
associated with certain von Neumann algebras called quantum euclidean spaces (or Moyal deformations, or CCR algebras). Their construction 
allows 
them to define non-commutative analogues of the key notions of Calder\'on-Zygmund theory, including off-diagonal kernel estimates and H\"ormander symbol classes, and then to prove analogues of the main theorems in singular integral operator theory. We cannot describe their construction in detail here, but note that their quantum variables $(x_{\Theta,j})_{j=1,...,2d}$ are Weyl pairs (for the appropriate choice of $\Theta$) acting on some non-commutative $L^p$-spaces (see \cite[Proposition 1.9]{qes}). 
\end{example}

We now collect some easy properties of Weyl pairs which will be useful later on. 
For $d=1$ they are due to Kato \cite{Kato} (see also \cite[Section 4.9]{Put})
and the proofs given there extend without difficulty to the present case. The main
observation is that, upon taking Laplace transforms, the third commutation relation in \eqref{eq:CCR} implies
the identities
\begin{equation}\label{eq:CCR-res}
\begin{aligned} R(\lambda,iA_j)e^{itB_j} & = e^{itB_j}R(\lambda+it,iA_j), \\
R(\lambda,iB_j)  e^{itA_j} &= e^{itA_j}R(\lambda-it,iB_j),
\end{aligned}
\end{equation}
for all $t\in \R$, $\Re\lambda\not=0$, and $1\le j\le d$.
It follows that $e^{itB_j}$ leaves $\Dom(A_j)$ invariant,
$e^{itA_j}$ leaves $\Dom(B_j)$ invariant, and 
\begin{equation}\label{eq:CCR-op1}
\begin{aligned}
A_je^{itB_j}f & = e^{itB_j}(A_j-t)f , \quad f\in \Dom(A_j),\\
 B_j e^{itA_j}f & =e^{itA_j}(B_j+t)f, \quad f\in \Dom(B_j).
\end{aligned}
\end{equation}
The same argument applies to the remaining combinations of $A_j$ and $B_k$, but no shifts over $\pm t$ occur when the operators commute. Thus we obtain:

\begin{lemma}\label{lem:Kato} 
Let $(A,B)$ be a Weyl pair. The operators $e^{itA_j}$ and $e^{itB_j}$ leave both $\Dom(A):= \bigcap_{k=1}^d \Dom(A_k)$ and $\Dom(B):= \bigcap_{k=1}^d \Dom(B_k)$ invariant. For $j\not=k$ we have 
\begin{equation}\label{eq:CCR-op2}
\begin{aligned}
 A_je^{itB_k}f  = e^{itB_k}A_jf, \quad f\in \Dom(A_j),\\
 B_k e^{itA_j}f  =e^{itA_j}B_kf, \quad f\in \Dom(B_k),
\end{aligned}
\end{equation}
while for $j=k$ the identities \eqref{eq:CCR-op1} hold. 
\end{lemma}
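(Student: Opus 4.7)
The lemma bundles together three statements: the on-diagonal shift identities \eqref{eq:CCR-op1}, the off-diagonal commutation identities \eqref{eq:CCR-op2}, and invariance of the joint domains $\Dom(A)$ and $\Dom(B)$ under every $e^{itA_j}$ and $e^{itB_j}$. The first of these is already obtained in the paragraph preceding the lemma from the resolvent identities \eqref{eq:CCR-res}, and the strategy is simply to run the same Laplace-transform argument in the remaining cases and then assemble the pieces.

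For the off-diagonal identities, the point is that when $j\neq k$ the three lines of \eqref{eq:CCR} all degenerate to genuine commutation of the corresponding $C_0$-groups (since $\delta_{jk}=0$ in the third line). Fixing $\Re\lambda>0$ and integrating the identity $e^{isA_j}e^{itB_k}=e^{itB_k}e^{isA_j}$ against $e^{-\lambda s}\ud s$ over $s\ge 0$, and invoking strong continuity of $(e^{isA_j})_{s\ge 0}$ together with the uniform boundedness of $e^{itB_k}$, yields
\begin{equation*}
R(\lambda,iA_j)e^{itB_k} = e^{itB_k}R(\lambda,iA_j),
\end{equation*}
and by the same token $R(\lambda,iA_j)e^{itA_k}=e^{itA_k}R(\lambda,iA_j)$ and the analogue with $A$ and $B$ interchanged. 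Writing any $f\in \Dom(A_j)$ as $f=R(\lambda,iA_j)g$ with $g\in X$ and applying the first identity shows that $e^{itB_k}f\in \Dom(A_j)$ with $A_j e^{itB_k}f=e^{itB_k}A_j f$; the remaining parts of \eqref{eq:CCR-op2} are proved identically. The technical step is the exchange of the closed operator $A_j$ with the Laplace integral; this is justified in the usual way from closedness of $A_j$ and absolute convergence of the resolvent integral, exactly as in Kato's one-dimensional argument \cite{Kato}.

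Invariance of the joint domains is now a formal consequence. For each $\ell\in\{1,\dots,d\}$, the operator $e^{itA_j}$ preserves $\Dom(A_\ell)$: when $\ell=j$ this is the diagonal case already recorded above the lemma, while for $\ell\neq j$ it follows from the off-diagonal commutation just established. Intersecting over $\ell$ gives $e^{itA_j}\Dom(A)\subseteq \Dom(A)$, and the three other combinations are treated identically. Thus the proof reduces to checking the off-diagonal commutation, which is a routine Laplace-transform calculation with no real obstacle beyond bookkeeping; the only subtle point, as already noted, is the interchange of the unbounded generator with the resolvent integral, and this is standard.
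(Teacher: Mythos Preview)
Your proposal is correct and follows essentially the same approach as the paper: Laplace-transform the commutation relations \eqref{eq:CCR} to obtain resolvent identities (with no shift when $j\neq k$), and read off domain invariance and the commutation formulas from these. One minor imprecision: when you verify that $e^{itA_j}$ preserves $\Dom(A_j)$ (the case $\ell=j$), this is not the ``diagonal case recorded above the lemma'' (that paragraph treats the mixed pair $A_j,B_j$), but simply the general fact that a $C_0$-group leaves the domain of its own generator invariant; likewise the ``technical step'' is really the interchange of the bounded operator $e^{itB_k}$ with the resolvent integral, not of $A_j$.
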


Differentiating \eqref{eq:CCR-res} at $t=0$ gives 
\begin{equation}\label{eq:kato}
\begin{aligned}
R(\lambda,iB_j)R(\mu,iA_j) & = R(\mu,iA_j) R(\lambda,iB_j)[I -iR(\lambda,iB_j)R(\mu,iA_j)], \\ 
R(\lambda,iA_j)R(\mu,iB_j) &= R(\mu,iB_j) R(\lambda,iA_j)[I +i R(\lambda,iA_j)R(\mu,iB_j)].
\end{aligned}
\end{equation}
If 
$$g = \prod_{j,j'=1}^d R(\lambda_j,iA_j)R(\lambda_{j'},iA_{j'}) \prod_{k,k'=1}^d R(\mu_k,iB_k) R(\mu_{k'},iB_{k'})f$$ with $f\in X$, then 
 \eqref{eq:kato} may be used to rewrite $g$, for any pair $1\le j,k\le d$, as
\begin{align*} g & =  R(\lambda,iA_j)R(\mu,iB_k) C_{jk}f \\ & = R(\mu,iB_k) R(\lambda,iA_j)D_{jk}f 
\\ & = R(\mu,iB_j) R(\lambda,iB_k)E_{jk}f\end{align*}
for suitable bounded operators $C_{jk}$, $D_{jk}$, $E_{jk}$.
From this we  see that  $g$ belongs to $\bigcap_{1\le j,k\le d}(\Dom(A_jA_k)\cap\Dom(A_jB_k)\cap\Dom(B_kA_j))\cap \Dom(B_jB_k)$. Since
$$\lim_{\lambda,\mu\to\infty} \prod_{j,j',k,k'=1}^d \lambda_j \lambda_{j'}R(\lambda_j,iA_j)R(\lambda_{j'},iA_{j'})\mu_k\mu_{k'} R(\mu_k,iB_k)R(\lambda_{k'},iB_{k'})f = f$$
 for all $f\in X$,
 the limit being taken in any order for $\lambda_1,\dots,\lambda_d$, $\lambda_1',\dots,\lambda_d'$, $\mu_1,\dots,\mu_d$, $\mu_1,'\dots,\mu_d'\to \infty$,
 this subspace is dense in $X$. 
The identity \eqref{eq:kato} also gives the identity
$A_jB_j g - B_jA_j g = ig$ for $g$ of the above form. 
The same argument gives commutation for the remaining combinations of $A_j$ and $B_k$. Thus we obtain:

\begin{lemma}\label{lem:D} Let $(A,B)$ be a Weyl pair of dimension $d$.
The subspace 
$$ \bigcap_{1\le j,k\le d}(\Dom(A_jA_k)\cap \Dom(A_jB_k)\cap \Dom(B_kA_j)\cap\Dom(B_jB_k))$$ 
is dense, 
and on this subspace we have $A_jA_k = A_kA_j$,  $B_jB_k = B_kB_j$,
and $A_jB_k - B_kA_j = \delta_{jk} iI.$ 
\end{lemma}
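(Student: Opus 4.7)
The plan is to follow the outline already suggested in the paragraph preceding the lemma, namely to exhibit an explicit dense subspace built from iterated resolvents of $iA_j$ and $iB_k$, and to read off the commutation identities from the resolvent identity \eqref{eq:kato}.

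First I would take as a candidate dense subspace the linear span of vectors of the form
\begin{equation*}
 g = \prod_{j,j'=1}^d R(\lambda_j, iA_j)R(\lambda_{j'}, iA_{j'}) \prod_{k,k'=1}^d R(\mu_k, iB_k)R(\mu_{k'}, iB_{k'}) f,
\end{equation*}
with $f\in X$ and $\lambda_j,\lambda_{j'},\mu_k,\mu_{k'}$ real and nonzero (say, sufficiently large). Density follows from the standard fact that if $iG$ generates a bounded $C_0$-group then $\lambda R(\lambda,iG)f\to f$ as $\lambda\to\infty$: applying this successively in each factor and using boundedness of the remaining resolvents to pass the limit through, the scaled version of $g$ converges to $f$. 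The two copies of each resolvent are included so that, after one application of \eqref{eq:kato} to swap a pair, one copy of each factor remains to provide the target domain.

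Second, for each prescribed pair $(j,k)$, I would apply the resolvent CCR \eqref{eq:kato} and the obvious commutativity of $R(\lambda,iA_j)$ with $R(\lambda',iA_{j'})$ and of $R(\mu,iB_k)$ with $R(\mu',iB_{k'})$ (which follows from the first two relations of \eqref{eq:CCR}), together with the commutativity of $R(\lambda,iA_j)$ and $R(\mu,iB_k)$ for $j\ne k$ (which follows from the third relation of \eqref{eq:CCR} with $\delta_{jk}=0$), to rewrite $g$ in three different forms: one with $R(\lambda_j,iA_j)R(\lambda_{k'},iA_{k'})$ on the outside, one with $R(\mu_j,iB_j)R(\mu_{k'},iB_{k'})$ on the outside, and one with the mixed factor $R(\lambda_j,iA_j)R(\mu_k,iB_k)$ (and also $R(\mu_k,iB_k)R(\lambda_j,iA_j)$) on the outside, each time absorbing the leftover product into a bounded operator $C_{jk},D_{jk},E_{jk}$ acting on $f$. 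Since the range of each $R(\lambda,iA_j)$ lies in $\Dom(A_j)$ and similarly for $B$, these rearrangements place $g$ in $\Dom(A_jA_k)$, $\Dom(B_jB_k)$, $\Dom(A_jB_k)$ and $\Dom(B_kA_j)$ simultaneously.

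Once $g$ is known to lie in the relevant domains, the commutation identities are read off directly. For $j\ne k$ the identities $A_jA_k g = A_kA_j g$, $B_jB_k g = B_kB_j g$, and $A_jB_k g = B_kA_j g$ follow by multiplying the outermost resolvents by $(\lambda-iA_j)$ (respectively $(\mu-iB_k)$), since those resolvents commute before being inverted. For $j=k$, applying the first line of \eqref{eq:kato} to $g$ written with $R(\lambda_j,iA_j)R(\mu_j,iB_j)$ outside and then multiplying by $(\lambda_j-iA_j)(\mu_j-iB_j)$ (from the left in the appropriate order on each side), the correction term $-iR(\lambda,iB_j)R(\mu,iA_j)$ produces exactly $ig$ after inversion, yielding $A_jB_j g - B_jA_j g = ig$.

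The main obstacle will be the careful bookkeeping in the second step: each application of \eqref{eq:kato} only swaps one adjacent pair of resolvents and introduces a bounded correction, so verifying simultaneous membership of $g$ in the full intersection over all pairs $(j,k)$ requires iterating these swaps in a controlled way and checking that the correction operators are themselves composed of resolvents (hence bounded) and do not disturb the outermost factors needed to supply the target domain.
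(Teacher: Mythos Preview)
Your proposal is correct and follows essentially the same argument as the paper: the paper's proof is precisely the paragraph you refer to, using the same iterated-resolvent vectors $g$, the same density argument via $\lambda R(\lambda,iG)f\to f$, and the same rewriting via \eqref{eq:kato} to place $g$ in the required domains and extract the commutator identity. Your account is in fact slightly more explicit about the bookkeeping than the paper itself.
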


Let $(A,B)$ be a Weyl pair of dimension $d$. Consider, for $t\in \R$ and $u,v\in \R^d$, the bounded operators
$$T_{u,v}(t):=  e^{\frac12 it^2uv}e^{itu A}e^{itv B} = e^{-\frac12 it^2uv} e^{itv B}e^{itu A}.$$

\begin{proposition} \label{prop:core}
 The family $(T_{u,v})_{t\in\R}$ is a bounded $C_0$-group on $X$, 
$\D(A)\cap \D(B)$ is a core for its generator $G_{u,v}$, and, on this core,
the generator is given by 
$$ G_{u,v}f  = iu A f+iv Bf, \quad f\in\D(A)\cap \D(B).$$
\end{proposition}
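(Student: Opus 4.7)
My plan is to establish the three assertions in order: (i) that $(T_{u,v}(t))_{t\in \R}$ is a bounded $C_0$-group; (ii) that $\D(A)\cap \D(B)\subseteq \D(G_{u,v})$ with $G_{u,v}f = iuAf+ivBf$ on this subspace; and (iii) that this subspace is a core.

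For (i), the crux is a commutation computation. Writing $e^{itu A}=\prod_j e^{itu_j A_j}$ and $e^{isv B}=\prod_k e^{isv_k B_k}$ (the factors within each product commuting by the first two CCR in \eqref{eq:CCR}) and applying the third CCR entry-by-entry, swapping $e^{itu_j A_j}$ past $e^{isv_k B_k}$ produces a scalar $e^{istu_jv_k\delta_{jk}}$; accumulating these over all pairs yields
\[
 e^{isv B}e^{itu A} = e^{ist\,uv}\,e^{itu A}e^{isv B}.
\]
Substituting this into $T_{u,v}(s)T_{u,v}(t) = e^{\frac12 is^2 uv}e^{isu A}e^{isv B}\cdot e^{\frac12 it^2 uv}e^{itu A}e^{itv B}$, collecting the scalar $e^{\frac12 i(s^2+t^2)uv + ist\,uv} = e^{\frac12 i(s+t)^2 uv}$, and using $e^{isu A}e^{itu A} = e^{i(s+t)uA}$ and the analogous identity for $B$, gives $T_{u,v}(s+t)$. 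Uniform boundedness in $t\in \R$ follows from
\[
 \n T_{u,v}(t)\n \;\le\; \prod_{j=1}^d \sup_{r\in\R}\n e^{irA_j}\n \cdot \prod_{k=1}^d \sup_{r\in\R}\n e^{irB_k}\n,
\]
which is finite by hypothesis, and strong continuity at $t=0$ follows from strong continuity of each of the three factors together with uniform local boundedness. By the group law this extends to all of $\R$, and we denote the generator of $(T_{u,v}(t))_{t\in \R}$ by $G_{u,v}$.

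For (ii), given $f\in \D(A)\cap \D(B)\subseteq \D(uA)\cap \D(vB)$, I split the difference quotient as
\[
 \frac{T_{u,v}(t)f-f}{t} = \frac{e^{\frac12 it^2 uv}-1}{t}\,e^{itu A}e^{itv B}f + e^{itu A}\,\frac{e^{itv B}f-f}{t} + \frac{e^{itu A}f-f}{t}.
\]
The first summand tends to $0$ since $(e^{\frac12 it^2 uv}-1)/t\to 0$ while $e^{itu A}e^{itv B}f\to f$; the third tends in norm to $iuAf$ by the definition of the generator of the group $(e^{itu A})_{t\in\R}$; and the second tends to $ivBf$ because $(e^{itv B}f-f)/t\to ivBf$ in norm, combined with the uniform boundedness of $e^{itu A}$ and its strong convergence to $I$ (applied to a norm-convergent sequence). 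Hence $f\in \D(G_{u,v})$ and $G_{u,v}f = iuAf+ivBf$.

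For (iii), the density of $\D(A)\cap \D(B)$ follows from Lemma~\ref{lem:D}, which exhibits an even smaller dense subspace. Moreover $\D(A)\cap \D(B)$ is $T_{u,v}$-invariant: the scalar phase is harmless, and each factor $e^{itu_j A_j}$ and $e^{itv_k B_k}$ leaves both $\D(A)$ and $\D(B)$ invariant by Lemma~\ref{lem:Kato}. The standard semigroup-theoretic criterion that a dense $T_{u,v}$-invariant subspace contained in the generator domain is a core (see, e.g., \cite[Proposition II.1.7]{EngNag}) then concludes the proof. The main point of care is the bookkeeping in (i), where the scalar phase $e^{\frac12 it^2 uv}$ must combine exactly with the CCR-induced phase $e^{ist\,uv}$ to yield a genuine group law; the rest is routine.
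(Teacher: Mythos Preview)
Your proof is correct and follows essentially the same approach as the paper: establish the group law via the CCR, differentiate at $t=0$ on $\D(A)\cap\D(B)$ via a product-rule argument (the paper packages this step as a separate lemma, Lemma~\ref{lem:diff}, while you carry it out explicitly), and then invoke density, invariance, and \cite[Proposition~II.1.7]{EngNag} for the core assertion.
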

\begin{proof}
The identity $T_{u,v}(0)=I$ is trivial.
The group property $T_{u,v}(t_0)\circ T_{u,v}(t_1) = T_{u,v}(t_0+t_1)$ follows straightforwardly from the commutation relations \eqref{eq:CCR}.
Strong continuity is also clear.

It follows from the general properties of Weyl pairs mentioned earlier that 
each operator $T_{u,v}(t)$ maps the subspace $\D(A)\cap \D(B)$ into itself. 
Moreover, $\D(A)\cap\D(B)$ is dense in $X$. By Lemma \ref{lem:diff} below,  every $f\in \D(A)\cap\D(B)$ belongs to $\D(G_{u,v})$ and 
differentiation gives $$ G_{u,v}f  = \frac{{\rm d}}{{\rm d}t}\Big|_{t= 0}T_{u,v}(t)f =  iu Af +iv Bf,
\quad f\in \Dom(A)\cap\Dom(B).$$
A general result in semigroup theory (see, e.g., \cite[Proposition II.1.7]{EngNag}) now implies that
$\D(A)\cap \D(B)$ is a core for $G_{u,v}$.
\end{proof}
The proof of Proposition \ref{prop:core} is completed by the following observation, 
which we leave as an easy exercise to the reader.

\begin{lemma}\label{lem:diff}
 Let $(S(t))_{t\in\R}$ and $(T(t))_{t\in\R}$ be strongly continuous families of operators, and let $f\in X$ be fixed.
 If
 \begin{enumerate}
  \item $t\mapsto S(t)f$ is differentiable at $t=0$, with derivative $S'(0)f:= \frac{\rm d}{{\rm d}t}\big|_{t=0}S(t)f$,\vskip2pt
  \item $t\mapsto T(t)S(0)f$ is differentiable at $t=0$, with derivative $T'(0)S(0)f:=\frac{\rm d}{{\rm d}t}\big|_{t=0}T(t)S(0)f$,\vskip2pt
 \end{enumerate}
then $t\mapsto T(t)S(t)f$ is differentiable at $t=0$, with derivative $$\frac{{\rm d}}{{\rm d}t}\Big|_{t=0} T(t)S(t)f = T'(0)S(0)f + T(0)S'(0)f.$$
\end{lemma}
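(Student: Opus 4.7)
The plan is to invoke the standard operator product rule via the decomposition
\[
T(t)S(t)f - T(0)S(0)f \;=\; T(t)\bigl[S(t)f - S(0)f\bigr] + \bigl[T(t) - T(0)\bigr]S(0)f.
\]
Dividing both sides by $t$, I would rewrite the difference quotient as
\[
\frac{T(t)S(t)f - T(0)S(0)f}{t} \;=\; T(t)\cdot \frac{S(t)f - S(0)f}{t} \;+\; \frac{T(t)S(0)f - T(0)S(0)f}{t},
\]
and then take $t\to 0$ on each summand separately.

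The second summand converges to $T'(0)S(0)f$ directly by hypothesis (2). For the first summand, hypothesis (1) gives $\frac{S(t)f - S(0)f}{t}\to S'(0)f$, and the strong continuity of $(T(t))$ at $0$ gives $T(t)g \to T(0)g$ for each fixed $g$. To combine these into $T(t)\bigl(\frac{S(t)f - S(0)f}{t}\bigr)\to T(0)S'(0)f$, I need local boundedness of $\|T(t)\|$ near $t=0$, which follows from strong continuity via the uniform boundedness principle: if $t_n\to 0$ with $\|T(t_n)\|\to\infty$, then $\{T(t_n)g\}$ would be bounded for each $g$ by strong continuity, contradicting UBP. Once $\|T(t)\|$ is bounded on a neighborhood of $0$, the estimate
\[
\bigl\|T(t)g_t - T(0)g_0\bigr\| \;\le\; \|T(t)\|\,\|g_t - g_0\| \;+\; \|T(t)g_0 - T(0)g_0\|
\]
with $g_t := \frac{S(t)f-S(0)f}{t}$ and $g_0 := S'(0)f$ shows that the first summand converges to $T(0)S'(0)f$.

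Adding the two limits yields $T'(0)S(0)f + T(0)S'(0)f$, which proves both the differentiability at $t=0$ and the stated formula for the derivative. There is no genuine obstacle here; the only point requiring slight care is the appeal to UBP to upgrade strong continuity to local norm-boundedness of $(T(t))$, which is needed in order to pass a convergent (rather than constant) vector through the family $T(t)$.
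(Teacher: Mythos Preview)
Your argument is correct and is exactly the standard product-rule computation one expects here; the paper itself gives no proof, leaving this lemma ``as an easy exercise to the reader,'' so there is nothing to compare against. The one subtlety you flag---invoking the uniform boundedness principle to get local norm-boundedness of $(T(t))$ from strong continuity---is precisely what is needed, and with it the proof is complete.
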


\section{The Weyl calculus}

Let $(A,B)$ be a Weyl pair of dimension $d$ on a Banach space $X$.
For $(x,\xi)\in \R^{2d}$ we consider the bounded operators
\begin{align}\label{eq:abstr-Schr}e^{i(uA+v B)}:=  e^{\frac12 iuv}e^{iu A}e^{iv B}.
\end{align}
This notation is justified by Proposition \ref{prop:core}. 

\begin{example} For the standard pair $(Q,P)$ on $L^2(\R^d)$, \eqref{eq:abstr-Schr} 
reduces to the 
 {\em Schr\"od\-inger representation}: the operators $e^{i(uQ+v P)}$ are unitary 
 on $L^2(\R^d)$ and given by
 $$ e^{i(uQ+v P)}f(x) =  e^{\frac12 iuv + iu x}f(x+v).$$
\end{example}

\begin{definition}[Weyl calculus]
Let $(A,B)$ be a Weyl pair of dimension $d$. For functions $a \in \mathscr{S}(\R^{2d})$ we define
\begin{align*}
 a(A,B)f := \frac1{ (2\pi)^d }\int_{\R^{2d}} \wh a(u,v) e^{i(uA+vB)}f \ud u\ud v,   \quad f\in X,
\end{align*}
where 
\begin{align*} \wh a(u,v) =  \frac1{(2\pi)^d}\int_{\R^{2d}} a(x,\xi) 
e^{-i(xu+\xi v)}\ud x\ud \xi 
\end{align*} is the Fourier--Plancherel transform of $a$.
The mapping $a\mapsto a(A,B)$ from $\mathscr{S}(\R^{2d})$ to $\calL(X)$ is called the {\em Weyl calculus} of $(A,B)$.
\end{definition}

An easy computation based on the identity
\begin{align}\label{eq:sigma} e^{i(uA+v B)}\circ e^{i(u'A+v'B)}
=  e^{\frac12i(u'v-uv')} e^{i(u+u')A+(v+v')B},
\end{align}
which follows from the commutation relations \eqref{eq:CCR}, gives the 
following analogue of the multiplicativity property of the functional calculus of a 
single operator: for all $a,b\in \mathscr{S}(\R^{2d})$ we have 
$$ a(A,B)\circ b(A,B) = (a\,\# \,b)(A,B),$$
where  $a\# b$ is the {\em Moyal product} of $a$ and $b$, given by (see \cite[Section XII.3.3]{Stein})
\begin{align*} \ & (a\,\#\,b)(x,\xi) \\ &\qquad = \frac{1}{\pi^{2d}} \int_{\R^{2d}}\int_{\R^{2d}} a(x+u, \xi+u') b(x+v, \xi+v') e^{-2i(vu'-uv')}\ud u\ud u'\ud v\ud v'.
\end{align*}

\begin{definition}\label{def:typeAB} Let $N,m \in \N$.
A Weyl pair $(A,B)$ is said to admit a {\em bounded Weyl calculus of type $(-N,m)$} if, for all $a \in \mathcal{S}(\R^{2d})$, we have
$$
\|a(A,B)\|\lesssim \underset{|\alpha|,|\beta|\le m}{\max}\ \underset{(x,\xi) \in\R^{2d}}{\sup}\ \lb \xi\rb^{N+|\alpha|}| \partial_{\xi} ^{\alpha}\partial_{x}^{\beta} a(x,\xi)|,
$$
with a constant independent of $a$.
The pair $(A,B)$ is said to admit a {\em bounded Weyl calculus of type $-N$}
if it admits a bounded Weyl calculus of type $(-N,m)$ for some $m\in\N$.
\end{definition}

In Subsection \ref{sec:stand} we will prove that if $X$ is a UMD space and $1<p<\infty$, then the standard
pair $(Q,P)$ has a bounded Weyl calculus of type $0$ on $L^p(\R^d;X)$.

The convergence lemma for the Dunford calculus for sectorial operators (see, e.g., \cite[Theorem 10.2.2]{HNVW2}) has the following analogue for the Weyl calculus:

\begin{lemma}[Convergence lemma]\label{lem:McI}
Let $(a_{n})_{n \in \N}$ be a sequence of Schwartz functions defined on $\R^{2d}$ and let $N\in \N$. 
There exist $m=m(d,N) \in \N$ 
and $M=M(d,N) \in \N$, both depending only on $d$ and $N$, 
such that the following holds.
If $(A,B)$ is a Weyl pair with a bounded Weyl calculus of type $(-N-1,m)$, and if
\begin{enumerate}
\item[\rm(i)]
for all multi-indices $\gamma \in \N^{d}$ with $|\gamma|\le M$ we have $\limn\partial^{\gamma}a_{n}= 0$ uniformly on compact sets, 
\item[\rm(ii)]
$
\displaystyle\sup_{n \in \N} \|a_{n}(A,B)\|<\infty,
$
\end{enumerate}
then $\limn a_{n}(A,B)f= 0$ for all $f\in X$.
\end{lemma}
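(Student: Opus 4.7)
The proof would follow the template of the McIntosh convergence lemma for the $H^\infty$-calculus: use the algebraic structure of the Weyl calculus to regularize the symbols $a_n$, establish convergence on a dense family of regularized vectors, and then extend to all of $X$ via the uniform bound (ii). To produce the regularization, pick a Schwartz function $q\in\mathscr{S}(\R^{2d})$---a Gaussian with $q(0)=1$ works---and set $q_\eps(x,\xi) := q(\eps x,\eps\xi)$. Substituting $u=\eps\tilde u$, $v=\eps\tilde v$ in the defining integral for $q_\eps(A,B)f$ and applying dominated convergence together with the strong continuity of $(u,v)\mapsto e^{i(uA+vB)}$, one finds (with appropriate normalization of $q$) that $q_\eps(A,B)f\to f$ in $X$ as $\eps\downarrow 0$ for every $f\in X$. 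By the multiplicativity of the Weyl calculus on Schwartz symbols, $a_n(A,B)\circ q_\eps(A,B) = (a_n\#q_\eps)(A,B)$ for each fixed $\eps$.

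The technical heart of the argument is to show that, for each fixed $\eps>0$, $\|(a_n\#q_\eps)(A,B)\|\to 0$ as $n\to\infty$. By the bounded Weyl calculus of type $(-N-1,m)$, this is implied by $\|a_n\#q_\eps\|_{(-N-1,m)}\to 0$. I would invoke the asymptotic expansion of the Moyal product to an order $M=M(d,N)\in\N$ chosen sufficiently large: it writes $a_n\#q_\eps$ as a finite sum of terms of the form $c_\gamma(\partial^\gamma a_n)(\partial^{\gamma'}q_\eps)$ with $|\gamma|,|\gamma'|<M$, plus a remainder $R_M$ given by an oscillatory integral involving $\partial^\gamma a_n$ for $|\gamma|=M$ paired against a Schwartz kernel built from $q_\eps$. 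For each main term, the Schwartz decay of $\partial^{\gamma'}q_\eps$ absorbs the weight $\langle\xi\rangle^{N+1+|\alpha|}$ arising in the $(-N-1,m)$ seminorm. A cutoff argument---splitting $\R^{2d}$ into a ball $|(x,\xi)|\le R$ where hypothesis (i) drives the relevant derivatives of $a_n$ to $0$ uniformly, and its complement where the rapid decay of $q_\eps$ dominates---then shows each main term tends to $0$. An analogous analysis, using integration by parts in the oscillatory integral to exploit the decay of $q_\eps$, handles the remainder $R_M$.

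Once $\|(a_n\#q_\eps)(A,B)\|\to 0$ for every fixed $\eps>0$, strong operator convergence on $X$ follows by a standard density argument: given $f\in X$ and $\eta>0$, choose $\eps$ so that $\|f-q_\eps(A,B)f\|\le\eta/(2\sup_n\|a_n(A,B)\|)$ (finite by hypothesis (ii)), then
\[
\|a_n(A,B)f\|\le\|a_n(A,B)\|\,\|f-q_\eps(A,B)f\| + \|(a_n\#q_\eps)(A,B)\|\,\|f\| \le \frac{\eta}{2}+o(1)
\]
as $n\to\infty$, so $\limsup_n\|a_n(A,B)f\|\le\eta/2$ and hence $a_n(A,B)f\to 0$.

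The main obstacle is the analysis of the Moyal remainder $R_M$: controlling $R_M$ in the $(-N-1,m)$ seminorm \emph{without} any uniform pointwise or polynomial bound on the $a_n$ themselves, since neither (i) nor (ii) provides one. The resolution is to exploit the Schwartz decay of $q_\eps$ through careful integration by parts in the oscillatory integral defining $R_M$, arranging things so that only suprema of derivatives of $a_n$ on a compact set enter the estimate---which hypothesis (i) then drives to $0$. The resulting orders $m$ and $M$ depend only on $d$ and $N$, as required.
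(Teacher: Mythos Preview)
Your overall strategy---regularize via an approximate identity $q_\eps(A,B)\to I$ strongly, use multiplicativity to write $a_n(A,B)q_\eps(A,B)=(a_n\# q_\eps)(A,B)$, expand the Moyal product to high order, and conclude by the density argument---is exactly the paper's. The paper's Lemma~\ref{lem:appid} is your approximate-identity step and Lemma~\ref{lem:appmoy} is your expansion-with-remainder step.

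There is, however, a genuine gap in your execution, and it stems from the specific choice of a Gaussian regularizer. Your cutoff argument for the main terms claims that on the region $|(x,\xi)|>R$ ``the rapid decay of $q_\eps$ dominates''. But dominates \emph{what}? Hypotheses (i) and (ii) give no global bound whatsoever on $\partial^\gamma a_n$; the functions $a_n$ are merely Schwartz, and on $\{|(x,\xi)|>R\}$ they could be enormous in a way that defeats any fixed Gaussian decay. Concretely, to bound $\sup_{(x,\xi)}\langle\xi\rangle^{N+1+|\alpha|}|\partial_\xi^\alpha\partial_x^\beta[(\partial^\gamma a_n)(\partial^{\gamma'}q_\eps)]|$ on the exterior region you would need a bound on $|\partial^{\cdots}a_n|$ there, uniform in $n$, and none is available. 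Letting $R$ depend on $n$ does not help either, since (i) only gives uniform convergence on \emph{fixed} compacta. The same issue recurs, more severely, in your proposed treatment of the remainder $R_M$: no amount of integration by parts in the oscillatory integral will produce an estimate depending only on $\partial^\gamma a_n$ restricted to a compact set when the other factor $q_\eps$ lives on all of $\R^{2d}$.

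The paper sidesteps this entirely by taking the regularizer $\eta\in C_{\rm c}^\infty(\R^{2d})$ \emph{compactly supported} (identically $1$ near the origin). Then $a_n\eta_{1/k}$ and all terms of the Moyal expansion are supported in a fixed ball $B(0,2k)$, so every estimate---both for the main terms, which the paper handles via the elementary bound $\|b(A,B)\|\lesssim\|\widehat b\|_1$ rather than the type $(-N-1,m)$ calculus, and for the remainder via Lemma~\ref{lem:appmoy}---automatically involves only $\sup_{B(0,2k)}|\partial^\gamma a_n|$, which (i) drives to zero. Replacing your Gaussian by such a compactly supported $\eta$ closes the gap, and your argument then coincides with the paper's.
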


Admittedly the formulation of this lemma is a bit awkward; the point here is that we need $(A,B)$
to be of type $(-N-1,m)$ for all $m\ge m_0$, where $m_0$ may depend on $N$ and $d$.
The proof of the lemma is based on an asymptotic expansion representation for  Moyal products of Schwartz functions.

\begin{lemma}\label{lem:appmoy}
There exists a sequence $(c_{\alpha})_{\alpha \in \N^{2d}}$ of complex numbers such that, for all $a,b \in \mathcal{S}(\R^{2d})$ and all integers $M\in \N$, 
there exists a function $r_{a,b;M+1} \in \mathscr{S}(\R^{2d})$ such that
$$
a(A,B)b(A,B) = \sum_{\substack{\alpha\in \N^{2d} \\ |\alpha|_\infty\le M}} c_{\alpha}\partial^{\alpha}(ab)(A,B) +r_{a,b;M+1}(A,B)
$$
whenever $(A,B)$ is a Weyl pair. Moreover, there exists an $m \in \N$, depending only on $d$ and $M$, such that if $(A,B)$ has a bounded Weyl calculus of type $(-M-1,m)$, then
\begin{align*}
\ & \|r_{a,b;M+1}(A,B)\|
\\ & \qquad\lesssim
\underset{\substack{\alpha', \beta', \alpha'', \beta''\in \N^{d} \\ |\alpha'|,|\beta'|, |\alpha''|, |\beta''| \leq m}}{\max}\ \underset{x,\xi\in \R^{2d}}{\sup}
\lb \xi\rb ^{\min(|\alpha'|,|\alpha''|)} |\partial^{\alpha'} _{\xi} \partial^{\beta'} _{x} a(x,\xi)\partial^{\alpha''} _{\xi} \partial^{\beta''} _{x}b(x,\xi)|.
\end{align*}
\end{lemma}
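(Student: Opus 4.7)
By the multiplicativity of the Weyl calculus (a direct consequence of the commutation relations \eqref{eq:CCR} and the identity \eqref{eq:sigma}), we have $a(A,B)\circ b(A,B)=(a\#b)(A,B)$, where $\#$ is the Moyal product. The strategy is to give a finite asymptotic expansion of $a\#b$ of the form $\sum_{|\alpha|_\infty\le M}c_\alpha\partial^\alpha(ab)+r_{a,b;M+1}$ with $r_{a,b;M+1}\in\mathscr{S}(\R^{2d})$, and then to estimate $\|r_{a,b;M+1}(A,B)\|$ by applying the assumed type $(-M-1,m)$ bound of the Weyl calculus to a suitable Schwartz seminorm of the remainder.

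To obtain the expansion, I would Taylor-expand $a(x+u,\xi+u')$ and $b(x+v,\xi+v')$ in the Moyal integral around the diagonal up to order $M$ in each multi-index. Pairing the resulting monomials with the symplectic oscillatory kernel $e^{-2i(vu'-uv')}$ and integrating yields explicit constant multiples of mixed partial derivatives of $a$ and $b$ at $(x,\xi)$; a Leibniz-type combinatorial rearrangement then identifies the resulting sum with $\sum_{|\alpha|_\infty\le M}c_\alpha\partial^\alpha(ab)(x,\xi)$, with universal constants $c_\alpha$ depending only on $d$ and $M$.

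For the remainder, the integral form of Taylor's theorem produces an oscillatory representation of $r_{a,b;M+1}$ in which the integrand depends linearly on derivatives of $a$ and $b$ of order at most $M+1$. To obtain the required seminorm bound I would integrate by parts repeatedly in $(u,u',v,v')$, using identities such as $(1-\tfrac14\Delta_{v'})e^{-2i(vu'-uv')}=(1+|u|^2)e^{-2i(vu'-uv')}$ and their $u$-, $v$- and $u'$-analogues. Each round converts polynomial factors in $u,u',v,v'$ into additional $\xi$- or $x$-derivatives on $a$ or $b$ (depending on which phase variable is differentiated against) and produces compensating factors of the form $\lb u\rb^{-2N}$ or $\lb v\rb^{-2N}$, which both ensure convergence of the oscillatory integral and cascade into Schwartz decay of $r_{a,b;M+1}$. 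The asymmetric weight $\lb\xi\rb^{\min(|\alpha'|,|\alpha''|)}$ in the stated bound emerges from always choosing to differentiate the factor bearing the smaller number of $\xi$-derivatives, so that polynomial growth in $\xi$ may be absorbed into whichever of $a$ or $b$ permits it. Once the Schwartz seminorm of $r_{a,b;M+1}$ relevant to Definition \ref{def:typeAB} is estimated by the product seminorm of $a$ and $b$ appearing in the statement, the type $(-M-1,m)$ hypothesis gives the announced operator-norm bound, with $m$ depending only on $d$ and $M$.

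The main obstacle will be the integration-by-parts bookkeeping needed to recover the asymmetric weight $\lb\xi\rb^{\min(|\alpha'|,|\alpha''|)}$: one must keep track at each stage of which of $a$ or $b$ is differentiated, and of the trade-offs between $x$- and $\xi$-derivatives, so that the final bound takes the form required rather than a cruder symmetric version. The identification of the polynomial part with $\sum c_\alpha\partial^\alpha(ab)$ is, by contrast, a purely algebraic manipulation of the Leibniz rule, and the overall structure of the argument is the standard Taylor-remainder approach to symbolic calculus in the style of \cite{abels,Stein}, the novelty lying in tailoring it to Definition \ref{def:typeAB}.
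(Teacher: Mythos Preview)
Your proposal follows essentially the same route as the paper: both start from the multiplicativity $a(A,B)b(A,B)=(a\#b)(A,B)$, use the standard asymptotic expansion of the Moyal product via Taylor's theorem with integral remainder, and then control the remainder symbol in $S^{-M-1}$-type seminorms so as to invoke the type $(-M-1,m)$ hypothesis. The paper simply delegates the expansion and the oscillatory-integral remainder estimates to \cite[Theorems 3.15 and 3.16]{abels} rather than carrying out the integration-by-parts bookkeeping you sketch, but the underlying argument is the same.
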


\begin{proof}
Let $a,b \in \mathcal{S}(\R^{2d})$.
Recall that $a(A,B)b(A,B) = (a \# b)(A,B)$, where $a\#b$ is the Moyal product of $a$ and $b$. 
By \cite[Theorem 3.16]{abels}, for any $M\geq 0$
there exists a function $r_{a,b;M+1}\in \mathcal{S}(\R^{2d})$ such that
\begin{equation}\label{eq:expansion} a \# b(x,\xi)\, = \sum _{\substack{\alpha\in \N^{d} \\ |\alpha| \le M}} \frac1{\alpha!}\frac1{i^\alpha}\partial_\xi^\alpha a(x,\xi)\partial_x^\alpha b(x,\xi) + r_{a,b;M+1}(x,\xi).
\end{equation}
This gives the formula in the first part of the theorem (with many coefficients $c_\alpha$ equal to $0$).

Suppose next that $(A,B)$ has a bounded Weyl calculus of type $(-M-1,m)$ for some 
$M\in \N$, where $m\in \N$ is arbitrary for the moment but will be fixed later.
Then, by assumption,  the remainder $r_{a,b;M+1}(A,B)$ in 
the expansion \eqref{eq:expansion} for this particular value of $M$ satisfies the estimate
$$
\|r_{a,b;M+1}(A,B)\|\lesssim \underset{|\gamma|,|\delta|\le m}{\max}\ \underset{(x,\xi) \in\R^{2d}}{\sup}\ \lb \xi\rb^{M+1+|\gamma|}| \partial_{\xi} ^{\gamma}\partial_{x}^{\delta} r_{a,b;M+1}(x,\xi)|
$$
with a constant only depending on $M$, $m$ and the pair $(A,B)$. 
By \cite[Theorem 3.15]{abels},
$r_{a,b,M+1}(x,\xi)$ is given by a finite linear combination, extending over all multi-indices
satisfying $|\alpha|=M+1$,  
of terms of the form
$$
R_{\alpha,a,b}(x,\xi) := \int_{\R^{2d}}e^{-ix'\xi'}(\xi')^\alpha \int _{0} ^{1} \partial^{\alpha}_{\xi} p(x,\xi+\theta \xi',x+x',\xi)(1-\theta)^{M}d\theta\ud x'\ud \xi'
$$
for $p(x,\xi,x',\xi')=a(x,\xi)b(x',\xi').$

As in the proof of  \cite[Theorem 3.15]{abels} (see, in particular, (3.20) on page 54 and (3.10) on page 47), there exists $m(d,M) \in \N$, depending only on $d$ and $M$, such that for all multi-indices $\gamma,\delta$ satisfying $|\gamma|, |\delta|\le m(d,M)$ we have
$$
|\partial^{\gamma}  _{\xi} \partial^{\delta} _{x} R_{\alpha,a,b}(x,\xi)|
\lesssim \lb \xi\rb ^{-(|\alpha|+|\gamma|)} = \lb \xi\rb ^{-(M+1+|\gamma|)},$$
with constant depending linearly on 
$$
\underset{|\alpha'|,|\beta'|, |\alpha''|, |\beta''| \leq m(d,M)}{\max}\ \underset{x,\xi\in \R^{2d}}{\sup}\
\lb \xi\rb ^{\min(|\alpha'|,|\alpha''|)} |\partial^{\alpha'} _{\xi} \partial^{\beta'} _{x} a(x,\xi)\partial^{\alpha''} _{\xi} \partial^{\beta''} _{x}b(x,\xi)|.
$$
If we fix the integer $m$ to be this $m(d,M)$, the second part of the lemma 
follows by collecting estimates.
\end{proof}

The proof of the convergence lemma requires one further auxiliary result.
Given a function $\eta:\R^{2d}\to \C$ and a real number $\delta>0$ we set
$\eta_{\delta}(x,\xi):= \eta(\delta x,\delta\xi)$.

\begin{lemma}\label{lem:appid}
For all $\eta \in C^{\infty}_{\rm c}(\R^{2d})$ with $\eta(0,0)=1$,
and $f\in X$, we have $$\limk\eta_{\frac{1}{k}}(A,B)f = f.$$
\end{lemma}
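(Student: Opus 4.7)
The plan is to unwind the definition of $\eta_{1/k}(A,B)f$, perform a rescaling that shifts the $k$-dependence from the symbol to the exponent of the group, and then pass to the limit by dominated convergence using the strong continuity of the $C_0$-groups generated by $iA_j$ and $iB_j$.

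First I would compute the Fourier transform of $\eta_{1/k}$. Since $\eta_{1/k}(x,\xi)=\eta(x/k,\xi/k)$, the change of variables $x = ky$, $\xi = kw$ in the definition of $\wh{\eta_{1/k}}$ gives
\begin{equation*}
\wh{\eta_{1/k}}(u,v) \,=\, k^{2d}\, \wh{\eta}(ku,kv).
\end{equation*}
Inserting this into the definition of the Weyl calculus and then rescaling $u'=ku$, $v'=kv$ yields the key identity
\begin{equation*}
\eta_{1/k}(A,B)f \,=\, \frac{1}{(2\pi)^d}\int_{\R^{2d}} \wh{\eta}(u',v')\, e^{i(u'A/k + v'B/k)} f \ud u'\ud v'.
\end{equation*}

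Next I would analyse the pointwise behaviour of the integrand as $k\to\infty$. Using the definition \eqref{eq:abstr-Schr}, we have
\begin{equation*}
e^{i(u'A/k+v'B/k)}f = e^{\frac{i}{2}u'v'/k^{2}}\, e^{iu'A/k}\, e^{iv'B/k}f.
\end{equation*}
For fixed $(u',v')\in\R^{2d}$ and $f\in X$, the scalar factor $e^{\frac{i}{2}u'v'/k^{2}}$ tends to $1$, while $e^{iv'B/k}f\to f$ and then $e^{iu'A/k}f\to f$ by the strong continuity of the $C_0$-groups generated by $ivB$ and $iuA$ (Proposition \ref{prop:core}, or directly the assumed strong continuity of each $e^{itA_j}$ and $e^{itB_j}$). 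Thus the integrand converges pointwise to $\wh{\eta}(u',v')f$.

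To pass the limit under the integral I would invoke dominated convergence. The uniform boundedness of the groups gives a constant $M$ (independent of $k$) such that $\|e^{i(u'A/k+v'B/k)}f\|\le M\|f\|$, so the integrand is dominated by the integrable function $(u',v')\mapsto M|\wh{\eta}(u',v')|\|f\|$, since $\wh\eta\in\mathscr{S}(\R^{2d})$. Therefore
\begin{equation*}
\limk \eta_{1/k}(A,B)f \,=\, \frac{1}{(2\pi)^d}\int_{\R^{2d}} \wh{\eta}(u',v')\ud u'\ud v'\cdot f \,=\, \eta(0,0)f \,=\, f,
\end{equation*}
where the second equality is Fourier inversion evaluated at the origin, and the last uses the normalisation $\eta(0,0)=1$. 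There is no real obstacle here; the only point requiring mild care is the consistent use of the Fourier convention fixed in the previous definition, which makes the $(2\pi)^{-d}$ factors cancel correctly at the end.
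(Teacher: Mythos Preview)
Your proof is correct and follows essentially the same approach as the paper: compute $\wh{\eta_{1/k}}(u,v)=k^{2d}\wh\eta(ku,kv)$, rescale to move the $k$-dependence into the group exponent, and pass to the limit using strong continuity and dominated convergence together with Fourier inversion at the origin. The paper's version is terser, leaving the dominated convergence and strong continuity arguments implicit, but the ideas are identical.
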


\begin{proof}
For all $f \in X$ we have 
\begin{align*}
\eta_{\frac{1}{k}}(A,B)f 
& = \frac1{(2\pi)^d}\int _{\R^{2d}} \widehat{{\eta}_{\frac{1}{k}}}(u,v) e^{i(uA+v B)}f \ud u \ud v 
\\ & = \frac1{(2\pi)^d}\int _{\R^{2d}} k^{2d}\widehat{\eta}(ku,kv) e^{i(uA+v B)}f \ud u \ud v
\\ & = \frac1{(2\pi)^d}\int _{\R^{2d}}
\widehat{\eta}(u,v) e^{i(\frac{u}{k}A+\frac{v}{k}B)}f \ud u\ud v 
\underset{k \to \infty}{\longrightarrow} \eta(0,0)f = f.
\end{align*}
\end{proof}

\begin{proof}[Proof of Lemma \ref{lem:McI}] 
Fix $N\in \N$, let $m= m(d,N)$ be as in Lemma \ref{lem:appmoy} (where we take $M= N$), 
and suppose 
$(A,B)$ has a bounded Weyl calculus of type $(-N,m)$. Let $(a_n)_{n\ge 1}$ be a sequence of Schwartz functions
satisfying the assumptions (i) and (ii) in the statement of the lemma.
Let $\eta \in C^{\infty}_{\rm c}(\R^{2d})$ be supported in $B(0,2)$ and identically $1$ on $B(0,1)$. Fixing $f \in X$ and $\varepsilon >0$, by Lemma \ref{lem:appid} and the uniform boundedness of the operators $a_{n}(A,B)$
we may choose a large enough integer $k$ so that 
\begin{align}\label{eq:limsup}
\underset{n \to \infty}{\lim\sup}\|a_{n}(A,B)f\| \le \underset{n \to \infty}{\lim\sup}\|a_{n}(A,B)\eta_{\frac{1}{k}}(A,B)f\| +\varepsilon.
\end{align}
Fix $n\ge 1$ for the moment. By Lemma \ref{lem:appmoy}, 
\begin{equation}\label{eq:aeta}
\begin{aligned}
\|& a_{n}(A,B) \eta_{\frac{1}{k}}(A,B)\|  
\\ & \quad \lesssim
\Big\n\sum_{|\alpha|_{\infty}\leq {N}}c_\alpha \partial^{\alpha}(a_{n}\eta_{\frac{1}{k}})(A,B) \Big\| + \|r_{a_{n},\eta_{\frac{1}{k}};N+1}(A,B)\|
\\ & \quad \lesssim  
\max_{\substack{\alpha\in \N^{2d} \\|\alpha|_{\infty}\leq {N}}}
\|\partial^{\alpha}(a_{n}\eta_{\frac{1}{k}})(A,B)\| 
+ \max_{\substack{\alpha',\beta'\in \N^{d} \\|\alpha'|,|\beta'|\leq m}} \underset{(x,\xi) \in B(0,2k)}{\sup}
\lb \xi\rb ^{|\alpha'|} |\partial^{\alpha'} _{\xi} \partial^{\beta'} _{x} a_{n}(x,\xi)|.
\end{aligned}
\end{equation}
with constants independent of $n$. 
For later reference (we don't need this here)
we observe that the constants are also uniform in $k$, as is evident from the proof of Lemma \ref{lem:appmoy}.

The first term on the right-hand side of \eqref{eq:aeta} can be estimated as follows:
\begin{align*}
\underset{|\alpha|_{\infty}\leq {N}}{\max} 
\|\partial^{\alpha}(a_{n}\eta_{\frac{1}{k}})(A,B)\| 
 & \lesssim
\underset{|\alpha|_{\infty}\leq {N}}{\max} 
\|\widehat{\partial^{\alpha}(a_{n}\eta_{\frac{1}{k}})}\|_{1}
\\ & \lesssim 
\underset{|\alpha|_{\infty}\leq {N}}{\max} 
\|(u,v) \mapsto \lb (u,v) \rb^{2d+1}\widehat{\partial^{\alpha}(a_{n}\eta_{\frac{1}{k}})}\|_{\infty}
\\ & \lesssim 
\underset{|\beta|_{\infty}\leq {N}+2d+1}{\max} 
\|\partial^{\beta}(a_{n}\eta_{\frac{1}{k}})\|_{1}
\\ & \lesssim 
\underset{|\beta|_{\infty}\leq {N}+2d+1}{\max} \|\partial^{\beta}a_{n}\|_{L^{\infty}(B(0,2k))},
\end{align*}
with constants independent of $n$.
This results in the estimate
\begin{align*}
\ & \|a_{n}(A,B) \eta_{\frac{1}{k}}(A,B)\| 
\\ & \ \lesssim
\underset{|\beta|_{\infty}\leq {N}+2d+1}{\max} \|\partial^{\beta}a_{n}\|_{L^{\infty}(B(0,2k))}
+ \underset{|\alpha'|,|\beta'|\leq m}{\max}\ \underset{(x,\xi) \in B(0,2k)}{\sup}
\lb \xi\rb ^{|\alpha'|} |\partial^{\alpha'} _{\xi} \partial^{\beta'} _{x} a_{n}(x,\xi)| 
\end{align*}
with constants independent of $n$.

Set $M:= \max(dN+d+2d^{2},2m)$; the extra factor $d$ in the first term in the maximum comes from $|\alpha|\le d|\alpha|_\infty$. If all partial derivatives up to order $M$ tend to $0$ uniformly
on $B(0,2k)$, it follows that 
$ \underset{n \to \infty}{\lim} \|a_{n}(A,B)f\| = 0$.
\end{proof}

\begin{definition}\label{def:S-N}
A function $a\in C^\infty(\R^{2d})$ is said to belong to the standard symbol class $S^{-N}$, with $N\in\Z$, if 
$$
\underset{(x,\xi) \in\R^{2d}}{\sup}\lb \xi\rb^{N+|\alpha|}\ | \partial_{\xi} ^{\alpha}\partial_{x}^{\beta} a(x,\xi)| <\infty
$$
for all multi-indices $\alpha,\beta \in \N^{d}$.
\end{definition}

The Schwartz class is included in $S^{0}$, and if $N\ge M$ then $S^{-N}\subseteq S^{-M}$.
The class $S^{-N}$ for $N>0$ plays a key role in estimating error terms that arise from the difference between the pointwise product of functions and their Moyal product. In particular, we use the fact that, for any $N>0$ and $r \in S^{-N}$, 
we may write
\begin{align}\label{eq:neg1}
T_{r}f(x) = \int _{\R^{d}} K_{r}(x,x-y)f(y)\ud y,
\end{align}
with
\begin{equation}\label{eq:neg2}
\begin{aligned}
\ & \underset{x \in \R^{d}}{\sup} \int  _{\R^{d}} |K_{r}(x,x-y)|\ud y + 
\underset{y \in \R^{d}}{\sup} \int  _{\R^{d}} |K_{r}(x,x-y)|\ud x 
\\ & \qquad\qquad\qquad  \lesssim
\underset{|\alpha|,|\beta|\leq 2d+1}{\max}\ \underset{(x,\xi) \in \R^{2d}}{\sup}
\lb \xi\rb^{|\alpha|}|\partial_{\xi} ^{\alpha} \partial_{x} ^{\beta} r(x,\xi)|.
\end{aligned}
\end{equation}
This is proven by combining \cite[Proposition 1 page 554]{Stein} and \cite[Theorem 5.12]{abels} (see also \cite[Theorem 5.15, Corollary 5.16]{abels}).

We are now ready to state and prove the main result of this section. It asserts that 
the calculus of a Weyl pair with bounded calculus of type $(-N,m)$ extends continuously to
symbols in the class $S^{-N}$:

\begin{theorem}\label{thm:Weyl-S0}
Let $N \in \N$.
If $(A,B)$ has a bounded Weyl calculus of type $(-N,m)$, where $m=m(d,N)$ is as in Lemma \ref{lem:appmoy}, then the Weyl calculus $a\mapsto a(A,B)$ extends continuously to functions $a\in S^{-N}$. More precisely, if  $a\in S^{-N}$ is given and $(a_{n})_{n \in \N}$ is sequence in $\mathcal{S}(\R^{2d})$ such that for all multi-indices $\gamma \in \N^{2d}$ we have  $$\partial^{\gamma}a_{n} \to \partial^\gamma a $$
 uniformly on compact sets as $n\to\infty$, then 
 the limit $$a(A,B):= \limn a_{n}(A,B)$$ exists in the strong operator topology of $\calL(X)$ and is independent of the approximating sequence.
Furthermore, 
for all $a \in S^{-N}$ we have 
$$ \|a(A,B)\| \lesssim
\underset{|\alpha|,|\beta|\le m+N}{\max}\ \underset{(x,\xi) \in\R^{2d}}{\sup}\lb \xi\rb^{N+|\alpha|}|\partial_{\xi} ^{\alpha}\partial_{x}^{\beta}  a(x,\xi)|.$$
\end{theorem}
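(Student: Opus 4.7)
The plan is to approximate $a\in S^{-N}$ by Schwartz functions through a canonical cutoff, to prove via the convergence lemma (Lemma \ref{lem:McI}) that the associated operators are Cauchy in the strong operator topology, and to read off the norm bound from the construction.

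Concretely, I would fix $\eta\in C_{\rm c}^\infty(\R^{2d})$ with $\eta\equiv 1$ on $B(0,1)$ and $\operatorname{supp}\eta\subseteq B(0,2)$, and set $a_k(x,\xi):=\eta(x/k,\xi/k)\,a(x,\xi)$, so that each $a_k$ lies in $C_{\rm c}^\infty(\R^{2d})\subseteq \mathcal{S}(\R^{2d})$ and $a_k(A,B)$ is already defined. The technically crucial first step is to verify that the $S^{-N}$-seminorms of $a_k$ are bounded, uniformly in $k$, by those of $a$. This is a Leibniz expansion: the derivatives of the rescaled cutoff contribute factors of $k^{-|\alpha-\alpha'|}$ supported on the annulus $\{|\xi|\sim k\}$, and on that annulus one can absorb them into $\langle\xi\rangle^{|\alpha-\alpha'|}$, producing $\langle\xi\rangle^{N+|\alpha|}|\partial^{\alpha-\alpha'}_\xi\partial^{\beta-\beta'}_x \eta(\cdot/k,\cdot/k)|\lesssim \langle\xi\rangle^{N+|\alpha'|}$. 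Combined with the type-$(-N,m)$ hypothesis, this yields $\sup_k \|a_k(A,B)\|\lesssim \|a\|_{S^{-N},m}$.

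To prove that $(a_k(A,B)f)_k$ is Cauchy for each $f\in X$ I would take arbitrary sequences $k_j,l_j\to\infty$ and apply Lemma \ref{lem:McI} to the differences $a_{k_j}-a_{l_j}$. On any fixed compact $K$, once both rescaled cutoffs are identically $1$ on $K$, the functions $a_{k_j}$ and $a_{l_j}$ coincide with $a$ there, so the difference and all its derivatives vanish identically on $K$. Together with the uniform operator-norm bound from the first step, this is precisely the input needed to conclude $(a_{k_j}-a_{l_j})(A,B)f\to 0$, and hence that $a(A,B)f:=\lim_k a_k(A,B)f$ exists in $X$. The same device applied to $b_n-a_{k(n)}$ for a diagonal choice $k(n)\to\infty$ gives independence from the approximating sequence $(b_n)$ in the statement, and the norm bound $\|a(A,B)\|\lesssim \|a\|_{S^{-N},m+N}$ follows by passage to the limit; the appearance of $m+N$ rather than $m$ reflects the extra derivatives consumed in the Moyal-product remainder estimate (Lemma \ref{lem:appmoy}) that sits inside Lemma \ref{lem:McI}.

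The main obstacle is the seminorm bookkeeping in the first step: the uniform-in-$k$ control rests on the delicate balancing of the $k^{-|\gamma|}$ from rescaling against the $\langle\xi\rangle^{|\gamma|}\sim k^{|\gamma|}$ available on the annular support of $\partial^\gamma\eta(\cdot/k,\cdot/k)$. Once this is in place, Lemma \ref{lem:McI} reduces both the construction of $a(A,B)$ and the verification of all its asserted properties to a careful but essentially routine argument.
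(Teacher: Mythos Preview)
Your proposal is correct and follows essentially the same strategy as the paper: approximate $a\in S^{-N}$ by the canonical cutoffs $a_k=\eta_{1/k}\,a$, invoke the convergence lemma (Lemma~\ref{lem:McI}) to obtain existence and well-definedness of the strong limit, and read off the norm bound. Two small remarks are in order.

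First, your annulus justification is slightly off. The derivatives $\partial^\gamma\eta(\cdot/k,\cdot/k)$ are supported where $|(x,\xi)|\in[k,2k]$ in $\R^{2d}$, not where $|\xi|\sim k$; in particular one may well have $|\xi|$ small on that set. The correct (and simpler) argument is that on the \emph{full} support $\{|(x,\xi)|\le 2k\}$ one has $\langle\xi\rangle\lesssim k$, so $\langle\xi\rangle^{|\alpha|-|\alpha'|}k^{-|\alpha-\alpha'|}\lesssim 1$, and the extra factor $k^{-|\beta-\beta'|}\le 1$ is harmless. With this fix your uniform seminorm estimate goes through.

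Second, your derivation of the final norm bound is actually more direct than the paper's. You apply the type-$(-N,m)$ hypothesis straight to the Schwartz approximants $a_k$ and pass to the limit, obtaining $\|a(A,B)\|\lesssim\|a\|_{S^{-N},m}$. The paper instead revisits the Moyal-product decomposition from the proof of Lemma~\ref{lem:McI} (the estimates \eqref{eq:limsup}--\eqref{eq:aeta} applied with $a$ in place of $a_n$), which is why their bound carries the extra $N$ derivatives. Your closing sentence attributing the $m+N$ to Moyal remainders inside Lemma~\ref{lem:McI} is therefore misplaced: in your argument Lemma~\ref{lem:McI} is used only for convergence, not for the norm bound, and your route simply yields the sharper estimate.
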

\begin{proof}
The existence and uniqueness of the strong operator limits follows from what we have already proved.
As pointed out in \cite[Section 1.4, page 232]{Stein}, it is possible to approximate functions $a\in S^{-N}$ by Schwartz functions in the way stated, 
by taking $a_{n}(x,\xi) = a(x,\xi)\eta(\frac{x}{n},\frac{\xi}{n}) = a(x,\xi)\eta_{\frac1n}(x,\xi) $ for some $\eta \in C^{\infty}_{\rm c}(\R^{2d})$ such that $\eta(0,0)=1$. 

It remains to prove the bound for the norm of $a(A,B)$.
For this we return to \eqref{eq:limsup} and \eqref{eq:aeta}, both of which 
also hold if we replace $a_n$ by $a$. For a given $\eps>0$, and a large enough $k$, this gives 
\begin{align*} \|a(A,B)\| 
&  \le\| (a\eta_{\frac{1}{k}})(A,B)\| + 2\eps
\\ & \lesssim  \max_{\substack{\alpha\in \N^{2d} \\|\alpha|_{\infty}\leq {N}}}
\|\partial^{\alpha}(a\eta_{\frac{1}{k}})(A,B)\| 
\\ & 
\qquad + \max_{\substack{\alpha',\beta'\in \N^{d} \\|\alpha'|,|\beta'|\leq m}} \underset{(x,\xi) \in B(0,2k)}{\sup}
\lb \xi\rb ^{|\alpha'|} |\partial^{\alpha'} _{\xi} \partial^{\beta'} _{x} a(x,\xi)|
 +2\eps
\end{align*}
with estimates uniform in $\eps>0$ and $k\ge 1$ (note that the sup norms of the derivatives of $\eta_k$ are uniform in $k\ge 1$).

Each expression in the first term on the right-hand side can be estimated using the type $(-N,m)$ of the Weyl calculus of $(A,B)$: 
\begin{align*}
\|\partial^{\alpha}(a\eta_{\frac{1}{k}})(A,B)\| 
& \lesssim 
\underset{\substack{\gamma,\delta\in \N^{d} \\|\gamma|,|\delta|\le m}}{\max}\ \underset{(x,\xi) \in\R^{2d}}{\sup}\ \lb \xi\rb^{N+|\gamma|}| \partial_{\xi} ^{\gamma}\partial_{x}^{\delta}\partial^\alpha (a\eta_{\frac{1}{k}})(x,\xi)|
\\ & \lesssim
\underset{\substack{\alpha',\beta'\in \N^{d} \\ |\alpha'|,|\beta'|\le m+Nd}}{\max}\ \underset{(x,\xi) \in\R^{2d}}{\sup}\ \lb \xi\rb^{N+|\alpha'|}| \partial_{\xi} ^{\alpha'}\partial_{x}^{\beta'} 
a(x,\xi)|,
\end{align*}
again with estimates uniform in $\eps>0$ and $k\ge 1$.
Since $\eps>0$ was arbitrary, this results in the desired estimate. 
\end{proof}

\subsection{Bounded Weyl calculus of type $0$ for Banach space-valued standard pairs}\label{sec:stand}

Let $X$ be a UMD space. On $L^p(\R^d;X)$, $1<p<\infty$, we consider the vector-valued standard pair $(Q \otimes I_{X},P \otimes I_{X})$ defined by
$Q \otimes I_{X} = (Q_j\otimes I_{X})_{j=1} ^{d}$ and
$P \otimes I_{X} = (P_j \otimes I_{X})_{j=1} ^{d}$,
where $Q_j$ and $P_j$ are the position and momentum operators as in Example \ref{ex:HO}.
Note that $(Q \otimes I_{X},P \otimes I_{X})$ is a Weyl pair: as in the scalar case,
$iQ_j \otimes I_{X}$ and $iP_j \otimes I_{X}$ generate multiplication and translation groups on $L^p(\R^d;X)$ given by the same formulas as in the scalar-valued case (Example \ref{ex:HO}).
The commutation relations for the vector-valued extensions also follow from their scalar-valued counterparts. 

\medskip\noindent
{\em Notation}. \ In order to simplify notation we will suppress the tensors with $I_X$ when no confusion is likely to arise.

\medskip
As an illustration of Definition \ref{def:typeAB} we now prove:

\begin{theorem}\label{thm:type0} 
If $X$ is a UMD Banach space, the standard pair $(Q,P)$ has a bounded Weyl calculus of type $0$ on $L^p(\R^d;X)$ for all  $1<p<\infty$.
\end{theorem}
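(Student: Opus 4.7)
The plan is to reduce the problem to the boundedness of standard (Kohn--Nirenberg) pseudodifferential operators on $L^p(\R^d;X)$. For $a\in \mathcal{S}(\R^{2d})$ and $f\in \mathcal{S}(\R^d;X)$, substituting $f(x+v) = (2\pi)^{-d/2}\int \hat f(\xi)e^{i(x+v)\xi}\ud\xi$ into the defining formula for $a(Q,P)f$, together with the explicit form of $e^{i(uQ+vP)}f$, yields the representation
\begin{equation*}
a(Q,P)f(x) = \frac{1}{(2\pi)^{d/2}}\int_{\R^d} b(x,\xi)\,\hat f(\xi)\,e^{ix\xi}\ud\xi,
\end{equation*}
where
\begin{equation*}
b(x,\xi) = \frac{1}{(2\pi)^d}\int_{\R^{2d}}\hat a(u,v)\,e^{i(ux+v\xi+\frac12 uv)}\ud u\ud v.
\end{equation*}
This is the Weyl-to-Kohn--Nirenberg correspondence $b = e^{\frac{i}{2}D_x\cdot D_\xi}a$ alluded to in \eqref{intro:pseudodef}.

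First, I would verify that finitely many $S^0$-seminorms of $b$ are controlled by finitely many $S^0$-seminorms of $a$, uniformly in $a$. This is a standard pseudodifferential calculus fact (see \cite{abels,Stein}), established by integration by parts in the oscillatory integral defining $b$; the extra phase $e^{\frac12 iuv}$ produces no essential difficulty since it is absorbed into the usual symbol calculus machinery. Consequently, it suffices to prove that, for any Schwartz symbol $b$, the Kohn--Nirenberg operator $\mathrm{Op}(b)$ satisfies
\begin{equation*}
\|\mathrm{Op}(b)\|_{\calL(L^p(\R^d;X))} \lesssim \max_{|\alpha|,|\beta|\le m'}\sup_{(x,\xi)}\lb\xi\rb^{|\alpha|}|\partial_\xi^\alpha\partial_x^\beta b(x,\xi)|
\end{equation*}
for some finite $m'$ depending only on $d$.

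Second, I would establish this quantitative $L^p(\R^d;X)$-bound for $\mathrm{Op}(b)$ with $b \in S^0$, using three ingredients: (a) the Schwartz kernel $K(x,y)$ of $\mathrm{Op}(b)$ is smooth off the diagonal and satisfies the standard Calder\'on--Zygmund size and regularity estimates $|\partial^\gamma_{x,y} K(x,y)|\lesssim |x-y|^{-d-|\gamma|}$, with constants bounded by a fixed finite seminorm of $b$ in $S^0$ (this is precisely the content of the kernel estimates \eqref{eq:neg1}--\eqref{eq:neg2} cited in the paper, applied after subtracting off a negative-order piece); (b) the $L^2$-boundedness of $\mathrm{Op}(b)$, obtained by the classical Calder\'on--Vaillancourt theorem or a direct $TT^*$/dyadic decomposition argument (see \cite[Ch.\ VI]{Stein}), with norm controlled by finitely many seminorms of $b$; and (c) the fact that scalar-kernel Calder\'on--Zygmund operators extend boundedly to $L^p(\R^d;X)$ for every UMD space $X$ and $1<p<\infty$, which is the vector-valued Calder\'on--Zygmund theorem recalled in the preliminaries.

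The main obstacle is step (b), the $L^2$-boundedness with quantitative seminorm control. Once this is in place, the UMD assumption on $X$ enters only through the generic vector-valued Calder\'on--Zygmund extension, since the kernel is scalar, so no operator-valued multiplier theorem is required. Combining all three ingredients gives $\|\mathrm{Op}(b)\|_{\calL(L^p(\R^d;X))}$ bounded by a finite $S^0$-seminorm of $b$, which via the seminorm control $a\mapsto b$ from the first step translates into the required bound by finitely many $S^0$-seminorms of the original Weyl symbol $a$, establishing that $(Q,P)$ has a bounded Weyl calculus of type $(0,m)$ for some $m=m(d)$.
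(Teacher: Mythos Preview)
Your approach is correct and differs from the paper's in structure. The paper uses Lemma~\ref{lem:weyltopseudo} to expand $b$ as a finite sum of explicit derivative terms of $a$ plus a remainder $r_a$ of negative order; the main terms are then handled by invoking the vector-valued pseudodifferential boundedness theorem of \cite{pz}, while the remainder is treated via the kernel bounds \eqref{eq:neg1}--\eqref{eq:neg2} and Schur's lemma. You instead absorb the Weyl-to-Kohn--Nirenberg passage into a single seminorm estimate $\|b\|_{S^0} \lesssim \|a\|_{S^0}$ and then argue that any scalar $S^0$ operator is a Calder\'on--Zygmund operator (scalar $L^2$-bound via Calder\'on--Vaillancourt, standard off-diagonal kernel estimates), which extends to $L^p(\R^d;X)$ by the UMD-valued Calder\'on--Zygmund theorem recalled in the preliminaries. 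Your route is more self-contained, relying only on classical scalar pseudodifferential facts and the generic UMD extension principle, and avoids the black-box appeal to \cite{pz}; the paper's route is more modular and reuses the vector-valued machinery already available. One minor correction: the estimates \eqref{eq:neg1}--\eqref{eq:neg2} you cite in step (a) concern symbols in $S^{-N}$ for $N>0$ and yield integrated $L^1$-kernel bounds, not the pointwise off-diagonal Calder\'on--Zygmund estimates $|\partial^\gamma K(x,y)|\lesssim |x-y|^{-d-|\gamma|}$ that your argument actually needs for an $S^0$ symbol; those are a separate (equally standard) fact, obtained directly by integration by parts in $\xi$.
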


To prove this theorem we will use \cite[Theorem 6]{pz}. To do so, we need to view $a(Q,P)$ as a pseudo-differential operator acting on $L^{2}(\R^{d};X)$. This is possible thanks to the following lemma.

\begin{lemma}
\label{lem:weyltopseudo}
For every $a \in \mathcal{S}(\R^{2d})$ there exists a unique $b \in \mathcal{S}(\R^{2d})$ such that $a(Q,P) = T_{b}$, where $T_{b}$ is the pseudo-differential operator on $L^2(\R^d)$ defined by
$$
T_{b}f(x) =\frac1{(2\pi)^{d/2}} \int  _{\R^{d}} b(x,\xi)\widehat{f}(\xi)e^{i\xi x}\ud\xi.
$$
This function is given by 
\begin{align}\label{eq:b-asympt}
b(x,\xi) = \sum _{|\alpha|=1} \frac1{\alpha!}\frac1{i^{|\alpha|}}\partial_\xi^{\alpha}\partial_y^{\alpha}p_a(x,\xi,y,\xi')\Big|_{y=x, \xi'=\xi}
+r_{a}(x,\xi),
\end{align}
where $r_{a}\in \mathscr{S}(\R^{2d})$ and $p_a(x,\xi,y,\xi') = a(\frac{x+y}{2},\xi)$.
Moreover, for all $m \in \N$, there exists $\tilde{m} \geq m$, depending only on $m$ and $d$, such that 
$$\underset{|\alpha|,|\beta|\leq m}{\max}\ \underset{(x,\xi) \in \R^{2d}}{\sup}\
\lb \xi\rb^{|\alpha|}|\partial_{\xi} ^{\alpha} \partial_{x} ^{\beta} r_a(x,\xi)|\lesssim \underset{|\alpha|,|\beta|\leq \tilde{m}}{\max}\ \underset{(x,\xi) \in \R^{2d}}{\sup}\
\lb \xi\rb^{|\alpha|}|\partial_{\xi} ^{\alpha} \partial_{x} ^{\beta} a(x,\xi)|.$$
\end{lemma}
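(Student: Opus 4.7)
The plan is to reduce the Weyl operator $a(Q,P)$ to an amplitude (double-symbol) operator in the sense of \cite{abels}, and then invoke the standard amplitude-to-symbol reduction theorem of pseudo-differential calculus. Starting from $a(Q,P)f=(2\pi)^{-d}\int\widehat{a}(u,v)e^{i(uQ+vP)}f\,\ud u\,\ud v$ together with the Schr\"odinger formula $e^{i(uQ+vP)}f(x)=e^{\frac{i}{2}uv+iux}f(x+v)$, I would substitute the Fourier definition of $\widehat{a}$, carry out the $u$-integration (which produces a Dirac mass forcing $s=x+v/2$), and change variables $y=x+v$ to obtain the amplitude representation
\begin{equation*}
a(Q,P)f(x) = \frac{1}{(2\pi)^d}\int_{\R^{2d}} a\Bigl(\tfrac{x+y}{2},\xi\Bigr) f(y) e^{i(x-y)\xi}\,\ud y\,\ud \xi.
\end{equation*}
This is exactly $T_{p_a}$ with the double-symbol $p_a(x,\xi,y,\xi')=a((x+y)/2,\xi)$, whose dependence on $\xi'$ is trivial.

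Next, since $a\in\mathscr{S}(\R^{2d})$ forces $p_a$ to be Schwartz-regular in all four of its variables, Theorem 3.15 of \cite{abels} produces a unique $b\in\mathscr{S}(\R^{2d})$ with $T_b = T_{p_a} = a(Q,P)$, together with an asymptotic expansion of the form stated in the lemma and an explicit integral representation for the remainder $r_a$. Uniqueness of $b$ also follows directly from the injectivity of the Kohn--Nirenberg quantization on Schwartz functions, as one can recover $b$ by testing $T_b$ against pure exponentials. The displayed first-order contribution comes from Taylor-expanding $p_a$ about $y=x$, together with the identity $\partial_{\xi_j}\partial_{y_j}p_a\bigl|_{y=x}=\tfrac{1}{2}\partial_{\xi_j}\partial_{x_j}a(x,\xi)$, which accounts for the factors $1/\alpha!\cdot 1/i^{|\alpha|}$ in the expansion.

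For the seminorm bound on $r_a$, I would trace through equations (3.10) and (3.20) of \cite[Theorem 3.15]{abels} --- the same estimates that will be reused in the proof of Lemma \ref{lem:appmoy}. By the chain rule applied to $p_a(x,\xi,y,\xi')=a((x+y)/2,\xi)$, each $x$- or $y$-derivative of $p_a$ costs one $x$-derivative of $a$ (with a harmless factor of $1/2$), while $\xi$-derivatives pass through unchanged. Abels' quantitative remainder estimate therefore translates directly into the stated inequality, with $\tilde{m}$ exceeding $m$ by an amount that depends only on $d$ and $m$. The principal obstacle is purely bookkeeping: reconciling Fourier conventions in the amplitude derivation and cleanly tracking derivative counts through the chain rule. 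No new analytic input is required beyond the cited amplitude-reduction theorem.
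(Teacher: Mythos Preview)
Your proposal is correct and follows essentially the same route as the paper: both reduce to the amplitude-to-symbol theorem \cite[Theorem 3.15]{abels} for the expansion and the remainder estimate, with the paper citing \cite[Proposition 1, page 554]{Stein} for the Weyl-to-amplitude representation while you derive it directly from the Schr\"odinger formula. The only cosmetic difference is that the paper's proof is two sentences of citations, whereas you spell out the amplitude derivation and the chain-rule bookkeeping explicitly.
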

\begin{proof}
The first assertion follows from \cite[Proposition 1, page 554]{Stein} (see also \cite[formula (58), page 258]{Stein}).
As in the proof of Lemma \ref{lem:appmoy}, the estimate follows from \cite[Theorem 3.15]{abels}. 
\end{proof}

\begin{proof}[Proof of Theorem \ref{thm:type0}]

We must show that there exists an integer $m\in \N$ such that
for all $a \in \mathcal{S}(\R^{2d})$ we have
$$
\|a(Q,P)\|_{\calL(L^p(\R^d;X))}\lesssim \underset{|\alpha|,|\beta|\le m}{\max}\ \underset{(x,\xi) \in\R^{2d}}{\sup}\lb \xi\rb^{|\alpha|}| \partial_{\xi} ^{\alpha}\partial_{x}^{\beta} a(x,\xi)|.
$$

Let $a \in \mathscr{S}(\R^{2d})$. We first apply Lemma \ref{lem:weyltopseudo} to write
\begin{equation}\label{eq:aPQX}
\begin{aligned}
a(P,Q) 
  = T_{b} 
  = \sum  _{|\alpha|=1} \frac1{\alpha!}\frac1{i^{|\alpha|}}T_{\partial_{\xi} ^{\alpha} \partial_{x} ^{\alpha} p_a}+ T_{r_{a}},
\end{aligned}
\end{equation}
where $\partial_{\xi} ^{\alpha} \partial_{x} ^{\beta} p_a(x,\xi)$
is short-hand for the expression 
$\partial_\xi^{\alpha}\partial_y^{\alpha}p_a(x,\xi,y,\xi')|_{y=x, \xi'=\xi}$
occurring in \eqref{eq:b-asympt}.
We now estimate the $L^p(\R^d;X)$-norms of the terms on the right-hand side of \eqref{eq:aPQX} separately, starting with $T_{r_{a}}$. As pointed out in \eqref{eq:neg1} and \eqref{eq:neg2} we have
\begin{align*}
T_{r_{a}}f(x) & =\int _{\R^{d}} K_{r_a}(x,y)f(y)\ud y
\end{align*}
with 
\begin{align*}
\ & \underset{x \in \R^{d}}{\sup} \int  _{\R^{d}} |K_{r_a}(x,y)|\ud y + 
\underset{y \in \R^{d}}{\sup} \int  _{\R^{d}} |K_{r_a}(x,y)|\ud x 
\\ & \qquad\qquad \lesssim
\underset{|\alpha|,|\beta|\leq 2d+1}{\max}\ \underset{(x,\xi) \in \R^{2d}}{\sup}
\lb \xi\rb^{|\alpha|}|\partial_{\xi} ^{\alpha} \partial_{x} ^{\beta} r_{a}(x,\xi)|.
\end{align*}
Therefore, by Schur's lemma (in the formulation of \cite[Lemma 4.1 with $p=q$, $r=1$, $\phi=\psi\equiv 1$]{NP}, noting that the proof extends without change to the vector-valued case), 
$T_{r_{a}}$ extends to a bounded operator on $L^p(\R^d;X)$ of norm at most
\begin{equation}\label{eq:est-Tr}
\begin{aligned} \n T_r\n_{\calL(L^p(\R^d;X))}
&\lesssim \underset{|\alpha|,|\beta| \leq 2d+1}{\max}\ \underset{(x,\xi) \in \R^{2d}}{\sup}
\lb \xi\rb^{|\alpha|}|\partial_{\xi} ^{\alpha} \partial_{x} ^{\beta} r_{a}(x,\xi)|
\\ & \lesssim 
\underset{|\alpha|,|\beta| \leq \tilde{m}}{\max}\ \underset{(x,\xi) \in \R^{2d}}{\sup}
\lb \xi\rb^{|\alpha|}|\partial_{\xi} ^{\alpha} \partial_{x} ^{\beta} {a}(x,\xi)|,
\end{aligned}
\end{equation}
for some $\tilde{m} \geq 2d+1$, the second inequality being a consequence of  Lemma \ref{lem:weyltopseudo}. 

Next we estimate the $L^p(\R^d;X)$-norms of the operators $T_{\partial_{\xi} ^{\alpha} \partial_{x} ^{\beta} p_a}$.
Let $\alpha,\beta \in \N^{d}$ be such that 
$|\alpha|,|\beta| \leq 1$.
In order to apply \cite[Theorem 6]{pz},
 we remark that $p_{a,\alpha,\beta}:=\partial^{\alpha} _{\xi} \partial ^{\beta} _{x}p_a$
has the following (trivial) properties: 
\begin{enumerate}
 \item[\rm(a)] for all $|\gamma|\le 2d+5$ 
 and $x\in \R^d$ we have 
 \begin{align*}
 \lb \xi\rb^{|\gamma|} |\partial_{\xi} ^{\gamma} p_{a,\alpha,\beta}(x,\xi)|
 & = \lb \xi\rb^{|\gamma|} |\partial_{\xi} ^{\alpha+\gamma} p_a(x,\xi)|
 \\ & \le 
 \underset{|\alpha'|,|\beta'| \leq 2d+6}{\max}\ \underset{(x,\xi) \in \R^{2d}}{\sup}
\lb \xi\rb^{|\alpha'|}|\partial_{\xi} ^{\alpha'} \partial_{x} ^{\beta'} a(x,\xi)|
\end{align*}
\item[\rm(b)] for all $|\gamma|,|\delta|\le 2d+5$ we have 
\begin{align*} 
|\partial_{\xi} ^{\gamma} \partial_{x} ^{\delta} p_{a,\alpha,\beta}(x,\xi)|
 & = |\partial_{\xi} ^{\alpha+\gamma} \partial_{x} ^{\beta+\delta} a(x,\xi)|
 \\ & \le 
 \underset{|\alpha'|,|\beta'| \leq 2d+6}{\max}\ \underset{(x,\xi) \in \R^{2d}}{\sup}
\lb \xi\rb^{|\alpha'|}|\partial_{\xi} ^{\alpha'} \partial_{x} ^{\beta'} a(x,\xi)|.
\end{align*}
\end{enumerate}
This means that each $b_{\alpha,\beta}$
 belongs to the class $S^{0}_{1,0}(2d+5,X)$ as defined in \cite[Definition 3]{pz} 
(note that the $R$-boundedness condition in this definition reduces to a uniform boundedness
condition in view of the fact that we are considering scalar-valued symbols).
Therefore, by \cite[Theorem 6]{pz}
(and its proof, which shows that the estimates depend linearly on the expressions on the right-hand sides in (a) and (b)),
the operators $T_{\partial_{\xi} ^{\alpha} \partial_{x} ^{\beta} a}$ are bounded on $L^{p}(\R^d;X)$, and \begin{equation}\label{eq:est-Tpartial}
\begin{aligned}
\|T_{\partial_{\xi} ^{\alpha} \partial_{x} ^{\beta} a}\|_{\calL(L^{p}(\R^d;X))} &
\lesssim \underset{|\alpha|,|\beta| \leq 2d+6}{\max}\ \underset{(x,\xi) \in \R^{2d}}{\sup}
\lb \xi\rb^{|\alpha|}|\partial_{\xi} ^{\alpha} \partial_{x} ^{\beta} a(x,\xi)|.
\end{aligned}
\end{equation}
Putting together the estimates \eqref{eq:est-Tr} and \eqref{eq:est-Tpartial} we obtain
$$
\|a(P,Q)\|_{\calL(L^{p}(\R^d;X))} \lesssim
 \underset{|\alpha|,|\beta| \leq \max(\tilde{m},2d+6)}{\max}\ \underset{(x,\xi) \in \R^{2d}}{\sup}
\lb \xi\rb^{|\alpha|}|\partial_{\xi} ^{\alpha} \partial_{x} ^{\beta} a(x,\xi)|,
$$
which concludes the proof.
\end{proof}

\section{The operator $A^2+B^2$}\label{sec:A2B2}

In this section we show how the Weyl calculus of the pair $(A,B)$ relates to the functional calculus of the operator $A^2+B^2$.

For Weyl pairs $(A,B)$ of dimension $d$ we define $$A^2 := \sum_{j=1}^d A_j^2, \quad B^2 := \sum_{j=1}^d B_j^2$$
with domains $\D(A^2) := \bigcap_{j=1}^d \Dom(A_j^2)$ and $\D(B^2) := \bigcap_{j=1}^d \Dom(B_j^2)$. The operator $A^2+B^2$ is understood as being defined on $\D(A^2)\cap \D(B^2)$.
Earlier we have already defined $\D(A) := \bigcap_{j=1}^d \Dom(A_j)$ and $\D(B) := \bigcap_{j=1}^d \Dom(B_j)$. 

The following proposition is an immediate consequence of Lemmas \ref{lem:Kato} and \ref{lem:D}.

\begin{proposition}\label{prop:dd}
If $(A,B)$ is a Weyl pair of dimension $d$ on $X$, then $\D(A^2)\cap \D(B^2)$ is dense in $X$ and invariant under the groups $(e^{itA_j})_{t\in\R}$ and $(e^{itB_j})_{t\in\R}$, $1\le j\le d$.
\end{proposition}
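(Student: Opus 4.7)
My plan is to read off both conclusions directly from Lemmas \ref{lem:Kato} and \ref{lem:D}, the only nontrivial point being to extend the first-order invariance statements in Lemma \ref{lem:Kato} to the second-order domains.

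For density, I would note that the subspace
$$D := \bigcap_{1\le j,k\le d}\bigl(\D(A_jA_k)\cap \D(A_jB_k)\cap \D(B_kA_j)\cap\D(B_jB_k)\bigr)$$
considered in Lemma \ref{lem:D} is, by taking $k=j$, contained in $\bigcap_{j=1}^d \D(A_j^2)\cap \bigcap_{j=1}^d \D(B_j^2) = \D(A^2)\cap \D(B^2)$. Since $D$ is dense in $X$ by Lemma \ref{lem:D}, so is $\D(A^2)\cap \D(B^2)$.

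For invariance under $e^{itA_j}$, it suffices by symmetry to check separately that $\D(A^2)$ and $\D(B^2)$ are preserved. Taking Laplace transforms of the first commutation relation in \eqref{eq:CCR} (as was done in deriving \eqref{eq:CCR-res} in the mixed case) yields $R(\lambda,iA_j)e^{itA_k} = e^{itA_k}R(\lambda,iA_j)$ for all $j,k$, which gives that each $e^{itA_j}$ leaves $\D(A_k)$ invariant and commutes with $A_k$ there; iterating, it leaves $\D(A_k^2)$ invariant, and therefore leaves $\D(A^2)$ invariant. For $\D(B^2)$, I split into cases using Lemma \ref{lem:Kato}: for $k\ne j$ the operators commute, $B_k e^{itA_j}f = e^{itA_j}B_k f$ on $\D(B_k)$, so iteration gives invariance of $\D(B_k^2)$; for $k=j$ we use $B_j e^{itA_j}f = e^{itA_j}(B_j+t)f$ on $\D(B_j)$, so if $f\in\D(B_j^2)$ then $(B_j+t)f\in\D(B_j)$ (since $B_jf\in\D(B_j)$ and $tf\in\D(B_j)$), hence $B_je^{itA_j}f=e^{itA_j}(B_j+t)f\in\D(B_j)$, i.e.\ $e^{itA_j}f\in\D(B_j^2)$. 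Taking intersections over $k$, $\D(B^2)$ is preserved. The argument for $e^{itB_j}$ is symmetric, swapping the roles of $A$ and $B$ and using the second identity in \eqref{eq:CCR-op1}.

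There is no serious obstacle here; the one point requiring a moment's care is verifying that the affine shift $f\mapsto (B_j+t)f$ preserves $\D(B_j)$, which is immediate since $\D(B_j)$ is a linear subspace. Everything else is a direct consequence of the cited lemmas.
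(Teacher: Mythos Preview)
Your proof is correct and is essentially the same approach as the paper's: the paper simply states that the proposition is an immediate consequence of Lemmas~\ref{lem:Kato} and~\ref{lem:D}, and your argument spells out precisely how to extract both conclusions from those lemmas, including the one genuinely nontrivial step (iterating the first-order invariance of Lemma~\ref{lem:Kato} to second-order domains via the shifted commutation relation).
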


The next theorem shows, among other things, that for any Weyl pair $(A,B)$ the operator $-(A^2+B^2)$ is closable and 
its closure generates an analytic $C_0$-semigroup of angle $\frac12\pi$. Up to a scaling, this semigroup can be thought of as an abstract
version of the Ornstein--Uhlenbeck semigroup. 
For the standard pair, such a theorem is well-known to mathematical physicists, going back at least, to \cite{unter}. It was rediscovered for the Ornstein-Uhlenbeck semigroup in \cite[Theorem 3.1]{NP}. Here we prove that it holds for all Weyl pairs.

\begin{theorem}\label{thm:A2B2} Let $(A,B)$ be a Weyl pair. The operators 
 $$ P(t) := \Big(1+\frac{1-e^{-t}}{1+e^{-t}}\Bigr)^d\exp\Bigl(-\frac{1-e^{-t}}{1+e^{-t}}(A^2+B^2)\Bigr) \quad (t \ge 0)
$$
define a uniformly bounded $C_0$-semigroup on $X$.
The dense set $\D(A^2)\cap \D(B^2)$ is a core for its generator $-L$, and, on this core, we have the identity
$$L =\frac12(A^2+B^2)- \frac12d.$$
The semigroup $(P(t))_{t\ge 0}$ extends to an analytic semigroup of angle $\frac12\pi$
that is uniformly bounded and strongly continuous on every subsector of smaller angle.
\end{theorem}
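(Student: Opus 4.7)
The plan is to realize $P(t)$ through the Weyl calculus as $P(t) = a_t(A,B)$, where
\[
a_t(x,\xi) := (1+\alpha(t))^d\exp\bigl(-\alpha(t)(|x|^2+|\xi|^2)\bigr), \qquad \alpha(t) := \tanh(t/2) = \frac{1-e^{-t}}{1+e^{-t}}.
\]
Each $a_t$ lies in $\mathscr{S}(\R^{2d})$, so \eqref{intro:def2} makes $P(t)$ a bounded operator on $X$. A direct Gaussian computation gives $\widehat{a_t}$ explicitly as another Gaussian, and a routine check shows that $\|\widehat{a_t}\|_{L^1}$ is uniformly bounded in $t\ge 0$. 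Consequently, $\sup_{t\ge 0}\|P(t)\|\le (2\pi)^{-d}\bigl(\sup_{u,v}\|e^{i(uA+vB)}\|\bigr)\sup_{t\ge 0}\|\widehat{a_t}\|_{L^1}<\infty$.

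For the semigroup property I would invoke the multiplicativity $a(A,B)b(A,B) = (a\,\#\,b)(A,B)$ of the Weyl calculus and verify $a_s\,\#\,a_t = a_{s+t}$ by direct computation: the Moyal product of two Gaussians is again a Gaussian, the parameters combine via the addition formula $\tanh((s+t)/2) = (\alpha(s)+\alpha(t))/(1+\alpha(s)\alpha(t))$, and the prefactor $(1+\alpha)^d$ emerges correctly from the normalising constants of the $4d$-dimensional Moyal integral. This Moyal product calculation is the technically most delicate step; the remaining ingredients are soft. Strong continuity at $t=0^+$ then follows from the Fourier representation together with the fact that $\widehat{a_t}$ is an approximate identity as $t\downarrow 0$ (its total integral tends to $(2\pi)^d$ and it concentrates at the origin), combined with the continuity of $(u,v)\mapsto e^{i(uA+vB)}f$ at $(0,0)$.

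To identify the generator on $\D(A^2)\cap\D(B^2)$, which is dense and group-invariant by Proposition \ref{prop:dd}, I would Taylor-expand
\[
a_t(x,\xi) = 1 + \tfrac{t}{2}\bigl(d - |x|^2 - |\xi|^2\bigr) + O(t^2),
\]
and use the Weyl representation---differentiating under the integral in $t$, justified by the explicit Gaussian form of $\widehat{a_t}$---to obtain $\lim_{t\to 0^+}(P(t)f-f)/t = \tfrac12(d - A^2 - B^2)f = -Lf$ for $f\in \D(A^2)\cap\D(B^2)$. The standard semigroup criterion (\cite[Proposition II.1.7]{EngNag}, already used in the proof of Proposition \ref{prop:core}) then shows that $\D(A^2)\cap\D(B^2)$ is a core for the generator of $(P(t))_{t\ge 0}$.

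Finally, for the analytic extension to angle $\tfrac12\pi$, observe that $\alpha(z) := \tanh(z/2)$ is holomorphic on $\{\Re z>0\}$ with $\Re\alpha(z)>0$ there, so $a_z\in\mathscr{S}(\R^{2d})$ and $P(z) := a_z(A,B)$ is well-defined on the whole right half-plane. Weak holomorphy of $z\mapsto P(z)f$ follows by differentiating under the integral in the Fourier representation, permitted by the Gaussian decay of $\widehat{a_z}$ that is uniform on compact subsets of $\{\Re z > 0\}$. For each $\theta<\tfrac12\pi$, uniform boundedness on the subsector $|\arg z|\le\theta$ reduces to bounding $\|\widehat{a_z}\|_{L^1}$ there, which stays finite because $\Re\alpha(z)$ is bounded away from $0$ on such subsectors; strong continuity on these subsectors and the extended semigroup law $P(z_1)P(z_2) = P(z_1+z_2)$ follow from the real case by analytic continuation.
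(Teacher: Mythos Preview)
Your approach mirrors the paper's own proof almost exactly: realize $P(t)=a_t(A,B)$ via the Weyl calculus, obtain the semigroup law from the Moyal identity $a_s\,\#\,a_t=a_{s+t}$, get uniform boundedness and strong continuity from the explicit Gaussian form of $\widehat{a_t}$, differentiate the Fourier representation to identify the generator on $\D(A^2)\cap\D(B^2)$, invoke the standard core criterion, and extend analytically by letting $z$ run through the right half-plane in the same formula.

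There is one slip in your analytic-extension step. You claim that $\|\widehat{a_z}\|_{L^1}$ stays bounded on $|\arg z|\le\theta$ ``because $\Re\alpha(z)$ is bounded away from $0$ on such subsectors''. That is false: $\alpha(z)=\tanh(z/2)\to 0$ as $z\to 0$ along any ray, so $\Re\alpha(z)$ is certainly not bounded below there. What actually controls $\|\widehat{a_z}\|_{L^1}$ is the ratio $|\alpha(z)|/\Re\alpha(z)=1/\cos(\arg\alpha(z))$: a short computation (cf.\ Lemma~\ref{lem:theta}) gives
\[
\|\widehat{a_z}\|_{L^1}\eqsim |1+\alpha(z)|^d\bigl(\cos(\arg\alpha(z))\bigr)^{-d},
\]
and on the subsector $|\arg z|\le\theta<\tfrac12\pi$ one has $\arg\alpha(z)$ confined to a proper subsector of the right half-plane, hence $\cos(\arg\alpha(z))$ bounded away from $0$ while $|1+\alpha(z)|$ stays bounded. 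With this correction your argument goes through.
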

In the above formula for $P(t)$, for $t>0$ the right-hand side is interpreted in terms of the Weyl calculus for the pair $(A,B)$,
i.e., $P(t) = a_{t}(A,B)$, where 
\begin{align}\label{def:a}a_{t}(x,\xi):=   (1+\lambda_t)^d e^{-\lambda_t(|x|^2+|\xi|^2)}
\end{align}
with $\lambda_t =  \frac{1-e^{-t}}{1+e^{-t}}$. For $t=0$ we interpret the formula as stating that $P(0)=I$.
\begin{proof} 
The semigroup property $P(t_1)P(t_2) = P(t_1+t_2)$ 
follows from the identity 
\begin{equation}\label{eq:asas} 
\begin{aligned} 
a_{{t_1}} \#\, a_{{t_2}} (x, \xi) 
 & = \frac{1}{\pi^{2d}} (1+\lambda_{t_1})^d(1+\lambda_{t_2})^d
\\ & \qquad \times \int_{\R^{2d}}\int_{\R^{2d}} e^{-\lambda_{t_1}((x+u)^2+( \xi+u')^2)} e^{-\lambda_{t_2}((x+v)^2+ ( \xi+v')^2)} 
\\ & = (1+\lambda_{t_1+t_2})^d e^{-\lambda_{t_1+t_2}(|x|^2+ |\xi|^2)}
\\ & = a_{{t_1+t_2}}(x, \xi)
\end{aligned}
\end{equation}
which is obtained by elementary computation.

Next we prove the strong continuity $\lim_{t\downarrow 0}P(t)f=f$ for all $f\in X$. 
Fix $t>0$ for the moment. We have
\begin{equation}\label{eq:Pt1}
\begin{aligned}
 P(t)f & = a_{t}(A,B)f 
 \\ & =  \frac1{(2\pi)^{d}}\int_{\R^{2d}} \wh a_t(u,v) e^{i(uA+v B)}f\ud u\ud v
 \\ & =  \frac1{(2\pi)^d}(1+\lambda_t)^d \frac1{(2\lambda_t)^d}\int_{\R^{2d}} \exp\Bigl( 
-\frac{1}{4\lambda_t}(|u|^2+|v|^2) \Bigr)e^{i(uA+v B)}f\ud u\ud v
\\ & =  \frac1{(2\pi)^d}\frac{1}{(1-e^{-t})^d} \int_{\R^{2d}} \exp\Bigl( 
-\frac{1+e^{-t}}{4(1-e^{-t})}(|u|^2+|v|^2) \Bigr)e^{i(uA+v B)}f\ud u\ud v\end{aligned}
\end{equation}
so that 
\begin{align}\label{eq:Pt2} \n P(t)\n \le \frac{M_AM_B}{(2\pi)^{d}} \frac{1}{(1-e^{-t})^d}\int_{\R^{2d}}e^{ 
-\frac{1}{4(1-e^{-t})}(|u|^2+|v|^2)}\ud u\ud v \lesssim M_AM_B. \end{align}
This proves the uniform boundedness of $P(t)$ for $t>0$. Strong continuity follows from the fact that 
$\wh a_t \to \delta_0$ weakly (in the sense that we have strong convergence against every $f\in C_{\rm b}(\R;X)$).

Let us denote the generator of the $C_0$-semigroup $(P(t))_{t\ge 0}$ by $-L$. We claim that $Lf = \frac12df-\frac12(A^2 f+B^2f)$
for all $f\in \D(A^2)\cap \D(B^2)$. 
Our argument will be somewhat formal. The reader will have no difficulty in making it rigorous by proceeding as follows:
write 
$$ e^{i(uA+vB)}f = \psi(u,v) e^{i(uA+vB)}f + (1-\psi(u,v)) e^{i(uA+vB)}f$$
for some compactly supported smooth function $a\psi$ which equals $1$ in a neighbourhood of $(0,0)$. Treating the resulting integrals separately,
the ones involving $\psi$ will give the desired convergence while the ones involving $1-\psi$ will vanish as we pass to the limit.

Proceeding to the details, we write 
$P(t) = (1+\lambda)^d R(\lambda)$, where $\lambda = \lambda_t = \frac{1-e^{-t}}{1+e^{-t}}.$
Then, 
\begin{align*} 
\frac{\dd }{\dd t}P(t)f  = \frac{\dd }{\dd \lambda}[(1+\lambda)]^d R(\lambda)f] \frac{\dd \lambda}{\dd t}  = \frac12(1-\lambda^2)\frac{\dd }{\dd \lambda}[(1+\lambda)]^d R(\lambda)f] .
\end{align*}
In the limit $t\downarrow 0$ we also have $\lambda\downarrow 0$ and  $\frac12(1-\lambda^2) \to \frac12$.
Hence the claim will be proved if we show that $\frac{\dd }{\dd \lambda}R(\lambda)f\to df- (A^2+B^2) f$
for $f\in \D(A^2)\cap \D(B^2)$.
We have
\begin{align*} 
\ & \lim_{\lambda\downarrow 0} \frac{\dd }{\dd \lambda}[(1+\lambda)]^d R(\lambda)f]  \\
&  \ \ = \lim_{\lambda\downarrow 0}\frac{\dd }{\dd \lambda}\Bigl[(1+\lambda)^d  \frac1{(2\pi)^{d}}\int_{\R^{2d}}\int_{\R^{2d}} \widehat{e^{-\lambda(|u|^2+|v|^2)}} e^{i(uA+v B)}f\ud u\ud v \Bigr] \\
&  \ \ = \lim_{\lambda\downarrow 0} d(1+\lambda)^{d-1} \frac1{(2\pi)^{d}}\int_{\R^{2d}}\int_{\R^{2d}} \widehat{ e^{-\lambda(|u|^2+|v|^2)}} e^{i(uA+v B)}f\ud u\ud v 
\\  & \ \ \quad + \lim_{\lambda\downarrow 0} \Bigl[(1+\lambda)^d \frac1{(2\pi)^{d}}\int_{\R^{2d}}\int_{\R^{2d}} \frac{\dd }{\dd \lambda}\widehat{ e^{-\lambda(|u|^2+|v|^2)}} e^{i(uA+vB)}f\ud u\ud v \Bigr]
\\ & \ \ = (I)+(II).
\end{align*}
Now, for any $f\in X$, 
\begin{align*} 
(I) &  =  \lim_{\lambda\downarrow 0} \frac{d}{(2\pi)^{d}}\int_{\R^{2d}}\int_{\R^{2d}} \widehat{ e^{-\lambda(|u|^2+|v|^2)}} e^{i(uA+vB)}f\ud u\ud v \\ & = d \int_{\R^{2d}}\int_{\R^{2d}} \delta_{(0,0)} e^{i(uA+vB)}f\ud u\ud v = df.
\intertext{
Similarly, for $f\in \D(A^2)\cap \D(B^2)$,
          }
(II)   & = \lim_{\lambda\downarrow 0} \frac1{(2\pi)^{d}}\int_{\R^{2d}}\int_{\R^{2d}} -\widehat{(|u|^2+|v|^2) e^{-\lambda(|u|^2+|v|^2)}} e^{i(uA+vB)}f\ud u\ud v 
\\ & = \int_{\R^{2d}}\int_{\R^{2d}} \Delta \delta_{(0,0)} e^{i(uA+vB)}f\ud u\ud v = -(A^2  + B^2) f.
\end{align*} 
Here, $\Delta \delta_{(0,0)}$ denotes the Laplacian of the Dirac delta function in the sense of distributions.

We will prove next that $\D(A^2)\cap \D(B^2)$ is a core for $L$.
We have already seen that $\D(A^2)\cap \D(B^2)$ is contained in $\Dom(L)$. The definition of the operators $P(t)$ together with 
the commutation relation defining Weyl pairs implies that $\D(A^2)\cap \D(B^2)$ is invariant under $P(t)$.
Since $\D(A^2)\cap \D(B^2)$ is also dense in $X$, a standard result in semigroup theory 
implies that 
$\D(A^2)\cap \D(B^2)$ is a core for $L$.

To complete the proof it remains to show the final assertion. By a standard analytic extension argument, the right-hand side of \eqref{eq:Pt1} defines an
analytic extension of $P(t)$ to the open right half-plane which again satisfies the semigroup property.
Estimating as in \eqref{eq:Pt2} we see that this extension is uniformly bounded on every sector of
angle strictly less than $\frac12\pi$. A standard semigroup argument (see, e.g., \cite[Exercise 9.8]{Haase-ISEM})
gives the strong continuity of the extension on each of these sectors.
\end{proof}

\begin{example}\label{ex:HA}
For the standard pair of momentum and position we recover the standard fact that the harmonic oscillator defined by $-Lf(u) = \frac12\Delta f(u) - \frac12|u|^2f(u)$ generates a holomorphic semigroup of angle $\frac12\pi$, strongly continuous on each smaller sector, on each of the spaces on $L^p(\R^d)$ with $1\le p<\infty$.
\end{example}

For later use we make the following simple observation.

\begin{corollary}\label{cor:spectral}
 For all $t>0$ we have $$ \n t L P(t)\n \le 2^{d+2}d M_A M_B(1+t) e^{-t} .$$
\end{corollary}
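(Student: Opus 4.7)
The plan is to differentiate the integral representation \eqref{eq:Pt1} of $P(t)$ in $t$ and estimate the result. Since $-L$ generates the analytic $C_0$-semigroup $(P(t))_{t\ge 0}$ established in Theorem \ref{thm:A2B2}, we have $P(t):X\to\Dom(L)$ for each $t>0$, so $LP(t)=-\frac{\dd}{\dd t}P(t)$ as a bounded operator. (Alternatively, the identity can be verified on the core $\D(A^2)\cap\D(B^2)$, where $L=\frac12(A^2+B^2)-\frac12 d$, and extended by density.)

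Writing the integrand of \eqref{eq:Pt1} as $\alpha_t\, e^{-\beta_t(|u|^2+|v|^2)}\,e^{i(uA+vB)}f$ with $\alpha_t := (1-e^{-t})^{-d}$ and $\beta_t := \frac{1+e^{-t}}{4(1-e^{-t})}$, the Gaussian decay justifies differentiation under the integral sign. This produces two terms, proportional to $\alpha_t'=-\frac{de^{-t}}{1-e^{-t}}\alpha_t$ and to $-\alpha_t\beta_t'=\frac{e^{-t}\alpha_t}{2(1-e^{-t})^2}$ respectively. Using the uniform bound $\|e^{i(uA+vB)}\|\le M_A M_B$ together with the Gaussian moments
\[
\int_{\R^{2d}}e^{-\mu|w|^2}\,\dd w=(\pi/\mu)^d,\qquad \int_{\R^{2d}}|w|^2 e^{-\mu|w|^2}\,\dd w=d\pi^d\mu^{-d-1},
\]
and the simplification $\alpha_t(\pi/\beta_t)^d/(2\pi)^d = 2^d/(1+e^{-t})^d$, a direct calculation gives
\[
\|LP(t)\|\;\le\; 2^d d\, M_A M_B\,\frac{e^{-t}}{(1-e^{-t})(1+e^{-t})^d}\Bigl(1+\tfrac{2}{1+e^{-t}}\Bigr)\;\le\; \frac{3\cdot 2^d d\, M_A M_B\, e^{-t}}{1-e^{-t}},
\]
where the last inequality uses $1+e^{-t}\ge 1$ and $\frac{2}{1+e^{-t}}\le 2$.

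Multiplying by $t$ and applying the elementary inequality $\frac{t}{1-e^{-t}}\le 1+t$ (which follows from $g(t):=(1+t)(1-e^{-t})-t$ satisfying $g(0)=0$ and $g'(t)=te^{-t}\ge 0$) yields $t\|LP(t)\|\le 3\cdot 2^d d\, M_A M_B\,(1+t)e^{-t}\le 2^{d+2}d\, M_A M_B\,(1+t)e^{-t}$, which is the claimed bound.

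The main obstacle is the first step, namely identifying $LP(t)$ with the formal $t$-derivative of \eqref{eq:Pt1}: although $L$ is unbounded, this identification is standard once the analyticity of the semigroup is invoked (or once one works on the core $\D(A^2)\cap\D(B^2)$). Everything else reduces to routine Gaussian computations and the scalar inequality above.
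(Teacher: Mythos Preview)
Your proof is correct and follows essentially the same route as the paper's: both differentiate the integral representation \eqref{eq:Pt1} in $t$, bound the two resulting terms using $\|e^{i(uA+vB)}\|\le M_AM_B$ together with the Gaussian moment identities, and finish with the elementary inequality $t/(1-e^{-t})\le 1+t$ (the paper uses the variant $t/(1-e^{-2t})\le 1+t$ after parametrising via $\lambda_t$ rather than directly via $t$). Your parameterisation via $\alpha_t,\beta_t$ is in fact slightly cleaner than the paper's chain-rule computation through $\lambda_t$, and yields the marginally sharper intermediate constant $3\cdot 2^d$ before you relax it to $2^{d+2}$.
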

\begin{proof}
Using the same notation as before, write $\lambda_t' = \frac{2e^{-t}}{(1+e^{-t})^2}$ for the derivative of $t\mapsto \lambda_t = \frac{1-e^{-t}}{1+e^{-t}}.$ In view of 
$LP(t)f =  -\frac{\rm d}{{\rm d}t}P(t)f$, differentiation of the right-hand side of \eqref{eq:Pt1} (and noting that $\frac{1}{1-e^{-t}} = \frac12( 1+\lambda_t^{-1}) $)
gives 
\begin{equation*}
\begin{aligned}
 & (4\pi)^d LP(t)f 
 \\ & \quad =  -\frac{\rm d}{{\rm d}t}  \Bigl((1+\lambda_t^{-1})^d\int_{\R^{2d}} \exp(-(|u|^2+|v|^2)/4\lambda_t) e^{i(uA+vB)}f\ud u\ud v\Bigr) 
\\ & \quad = d(1+\lambda_t^{-1})^{d-1}\frac{\lambda_t'}{\lambda_t^2} \int_{\R^{2d}} \exp(-(|u|^2+|v|^2)/4\lambda_t) e^{i(uA+vB)}f\ud u\ud v
\\ & \quad\quad   -(1+\lambda_t^{-1})^d \frac{\lambda_t'}{4\lambda_t^{2}}
\int_{\R^{2d}} (|u|^2+|v|^2)\exp(-(|u|^2+|v|^2)/4\lambda_t) e^{i(uA+vB)}f\ud u\ud v
\end{aligned}
\end{equation*}
and therefore 
\begin{equation*}
\begin{aligned}
 &\n LP(t)f\n  
\\ & \quad \le d (1+\lambda_t)^{d-1}\frac{\lambda_t'}{\lambda_t} \frac{M_A M_B \n f\n}{(4\pi \lambda_t)^d}\int_{\R^{2d}} \exp(-(|u|^2+|v|^2)/4\lambda_t) \ud u\ud v
\\ & \quad \quad + (1+\lambda_t)^d
\frac{\lambda_t'}{4\lambda_t^{2}}
\frac{M_A M_B \n f\n}{(4\pi \lambda_t)^d} \int_{\R^{2d}} (|u|^2+|v|^2)\exp(-(|u|^2+|v|^2)/4\lambda_t)\ud u\ud v.
\end{aligned}
\end{equation*}
In view of the identities
$$  \frac1{(4\pi \lambda_t)^d}\int_{\R^{2d}} \exp(-(|u|^2+|v|^2)/4\lambda_t) \ud u\ud v = 1$$
and 
\begin{align*}
& \frac1{(4\pi \lambda_t)^d}\int_{\R^{2d}}  (|u|^2+|v|^2)\exp(-(|u|^2+|v|^2)/4\lambda_t) \ud u\ud v
\\ & \quad = \frac1{(4\pi \lambda_t)^d}\sum_{j=1}^{2d}\int_{\R^{2d}}  w_j^2\exp(-|w|^2/4\lambda_t) \ud w
\\ & \quad = \frac1{(4\pi \lambda_t)^d}\sum_{j=1}^{2d}\Bigl(\int_{\R}  w_j^2\exp(-w_j^2/4\lambda_t) \ud w_{j}\Bigr)\prod_{\substack{1\le k\le 2d \\ k\not=j}}\int_{\R}\exp(-w_k^2/4\lambda_t) \ud w_k
\\ & \quad = \frac1{(4\pi \lambda_t)^{1/2}}\sum_{j=1}^{2d}\int_{\R}  w_j^2\exp(-w_j^2/4\lambda_t) \ud w_{j} \\ & \quad = 4d\lambda_t
\end{align*}
we obtain
\begin{align*}
 \n tLP(t)f\n & \le t\Bigl(d 2^{d-1}\frac{\lambda_t'}{\lambda_t }+  2^d\frac{\lambda_t'}{4\lambda_t^{2}}\cdot 4d\lambda_t\Bigr)M_A M_B \n f \n
  \\ & = 2^{ d+1}dt\frac{\lambda_t'}{\lambda_t }M_A M_B \n f \n
  \\ & = 2^{ d+1}dt \cdot  \frac{2e^{-t}}{(1+e^{-t})^2} \cdot \frac{1+e^{-t}}{1-e^{-t}}M_A M_B \n f\n
  \\ & =  2^{ d+2}d\frac{t}{1-e^{-2t}}e^{-t} M_A M_B \n f \n
  \\ & \le 2^{ d+2}d(1+t) e^{-t} M_A M_B \n f\n.
\end{align*}
\end{proof}

\subsection{Ground states}

Let $(A,B)$ be a Weyl pair on the Banach space $X$.
Upon passing to the limit $t_1,t_2\to\infty$ in \eqref{eq:asas} one sees that if
the limit $$a_\infty(A,B) := \lim_{t\to\infty} a_t(A,B)$$ exists
in the weak operator topology of $\calL(X)$, then it is a projection. 
That this limit indeed exists under the assumption that $X$ be reflexive 
is a consequence of the following lemma.

\begin{lemma}
  Let $(S(t))_{t\ge 0}$ be a $C_0$-semigroup on a reflexive Banach space $X$ and let 
  $(T_t)_{t\ge 0}$ be a uniformly bounded family of operators on $X$ such that 
  $ S(s)\circ T_t = T_t \circ S(s) = T_{t+s}$ for all $s,t\ge 0$. 
  Then there exists a bounded operator $\pi$ on $X$ such that $\lim_{t\to\infty} T_t x =  \pi (x)$ weakly for all $x\in X$.
 \end{lemma}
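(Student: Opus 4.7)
The plan is to reduce the claim to the mean ergodic theorem for bounded $C_0$-semigroups on reflexive Banach spaces. Setting $t=0$ in the identity $T_{t+s}=S(s)\circ T_t$ shows that $T_s = S(s)\circ T_0$ for every $s\ge 0$ and that $T_0$ commutes with each $S(s)$. Hence the whole family has the form $T_t = S(t)T_0$, and the uniform boundedness assumption becomes $\sup_{t\ge 0}\|S(t)T_0\|<\infty$. In particular, for each $x\in X$ the orbit $\{T_t x\}_{t\ge 0}=\{S(t)T_0 x\}_{t\ge 0}$ is norm-bounded, hence, by reflexivity, relatively weakly compact.

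Fix $x\in X$ and put $y:=T_0 x$. Consider the closed $S$-invariant subspace $Y:=\overline{\mathrm{span}\{S(t)y:t\ge 0\}}$. On $Y$ the semigroup $S$ has uniformly bounded orbits, so the mean ergodic theorem provides a bounded projection $P$ onto the fixed subspace $\{z\in Y : S(s)z=z\text{ for all }s\ge 0\}$ such that the Ces\`aro averages satisfy
\[
\frac{1}{T}\int_0^T S(t)y\,dt \rightharpoonup Py \quad \text{as } T\to\infty.
\]
I would then define $\pi(x):=P(T_0 x)$; boundedness and linearity of $\pi:X\to X$ follow from the boundedness of $P$ and $T_0$ and a routine verification that the ergodic projection extends coherently across different choices of $x$.

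To upgrade Ces\`aro convergence of the averages to weak convergence of $T_t x$ itself, my plan is to show that any weak cluster point $z$ of $\{T_{t_n}x\}$ along a sequence $t_n\to\infty$ must lie in the fixed subspace of $S$: from the commutation $T_{t_n+s}x=S(s)T_{t_n}x$ and the weak continuity of the bounded operator $S(s)$ one obtains that $S(s)z$ is itself a weak limit of $T_{t_n+s}x$, and a subnet extraction combined with the weak compactness of the orbit forces $S(s)z=z$. Once every weak cluster point is identified with a fixed point of $S$, the Ces\`aro identification above pins it down as $\pi(x)$, finishing the proof. I expect this uniqueness step to be the main obstacle: bounded orbits of a general $C_0$-semigroup on a reflexive Banach space need not converge weakly even though their Ces\`aro averages always do. Fully exploiting the commutation $T_{t+s}=S(s)T_t$, possibly via the Jacobs--DeLeeuw--Glicksberg decomposition of the weakly compact semigroup representation into reversible and flight components, is the natural route.
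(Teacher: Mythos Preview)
Your instinct in the final paragraph is exactly right, and in fact it exposes a genuine defect in the lemma itself: as stated, the lemma is false. Take $X=\C$, $S(t)=e^{it}$, and $T_t=S(t)$; all hypotheses are met, yet $T_t 1=e^{it}$ has no limit as $t\to\infty$. Since you have already observed that $T_t=S(t)T_0$, the commutation relation $S(s)T_t=T_{t+s}$ carries no information beyond the semigroup law for $S$, so the Jacobs--DeLeeuw--Glicksberg decomposition will not rescue the argument: the reversible (``rotational'') part is precisely what obstructs convergence, and nothing in the hypotheses excludes it.

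The paper's proof takes a different, more elementary route---extracting two weakly convergent subsequences $T_{r_k}x\rightharpoonup y$ and $T_{s_k}x\rightharpoonup y'$ with $r_k\le s_k$, writing $T_{s_k}x=S(s_k-r_k)T_{r_k}x$, and estimating
\[
|\langle T_{s_k}x-T_{r_k}x,\,x^*\rangle|\le \|T_{r_k}x\|\,\|S^*(s_k-r_k)x^*-x^*\|_{X^*}.
\]
It then asserts that the right-hand side tends to $0$ by strong continuity of $S^*$. But this would require $s_k-r_k\to 0$, which is not arranged and cannot be arranged in general (e.g.\ $r_k=k$, $s_k=k+\tfrac12$). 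So the paper's argument has a gap at the same essential point you identified.

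In the paper's only application (to $T_t=e^{-tL}$ with $-L$ the generator of a \emph{bounded analytic} semigroup), the conclusion is nonetheless true: for bounded analytic semigroups on reflexive spaces one has the direct sum decomposition $X=\Ker(L)\oplus\overline{\Ran(L)}$ and $e^{-tL}x\to Px$ strongly, where $P$ is the projection onto $\Ker(L)$. Thus the right fix is not to complete either argument for the lemma as stated, but to add a hypothesis (boundedness and analyticity of $S$, or the absence of nonzero purely imaginary spectrum) under which the reversible part of the JdLG decomposition is trivial.
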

 \begin{proof}
 Fix $x\in X$. Since $X$ is reflexive, any sequence $t_n\to\infty$ has a subsequence $t_{n_k}\to\infty$ such that $\limk T_{n_k}x $ exists weakly. Let $\pi(x)$ be this weak limit. We will show that $\pi(x)$ does not depend on the choice of 
 the sequence $t_n\to\infty$, nor on the choice of the weakly convergent subsequence $t_{n_k}\downarrow 0$.
 To this end it suffices to show that if both $r_k\to\infty$ and $s_k\to\infty$ are such that the weak limits
 $y:= \limk T_{r_k}x$ and $y':= \limk T_{s_k}x$ exist, then $y = y'$.
 By passing to a further subsequence we may assume that $r_k \le s_k$ for all $k$. Then
 $T_{s_k}x = S(s_k-r_k)T_{r_k}x$ and therefore, for all $x\s\in X\s$,
 \begin{align*}
  |\lb T_{s_k}x-T_{r_k}x,x\s\rb| & =
  |\lb S(s_k-r_k)T_{r_k}x-T_{r_k}x,x\s\rb|
  \\ & \le \n T_{r_k}x\n  \n S\s(s_k-r_k)x\s-x\s\n_{X^{*}}
  \\ & \le M\|x\|\n S\s(s_k-r_k)x\s-x\s\n_{X^{*}},
 \end{align*}
 where $M = \sup_{t\ge 0} \n T_t\n$.
 Since $X$ is reflexive, the adjoint semigroup $(S^*(t))_{t\ge 0}$ is strongly continuous (see \cite[Proposition I.5.14]{EngNag}),
 it follows that 
 $$ |\lb y-y',x\s\rb| = \limk |\lb T_{s_k}x-T_{r_k}x,x\s\rb| \le
 M \|x\|\limk \n S\s(s_k-r_k)x\s-x\s\n_{X^{*}} = 0.$$
 This being true for all $x\s\in X\s$, we conclude that $y=y'$.
 
 The operator $\pi$ thus defined is linear and bounded, with norm $\n \pi\n \le M$. That $\pi (x)= \lim_{t\to\infty}T_t x$ weakly
 now follows from a standard subsequence argument.
 \end{proof}

 \begin{proposition} 
 \label{prop:nullrange}
 If $(A,B)$ is a Weyl pair on a reflexive Banach space $X$, then the weak operator limit  $\pi  := \lim_{t\to\infty} a_t(A,B)$ exists in $\calL(X)$.
 Furthermore, $\Ker(L)=\Ran(\pi) $ and $\overline{\Ran(L)}= \Ker(\pi).$
 \end{proposition}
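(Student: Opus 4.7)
The plan is to establish the existence of $\pi$ first and then identify its range and kernel separately. For existence, I would apply the preceding lemma with $S(t)=T_t=P(t)$: the semigroup law $P(s)P(t)=P(s+t)$ supplies the intertwining relation, $(P(t))_{t\ge 0}$ is uniformly bounded by Theorem~\ref{thm:A2B2} (cf.\ \eqref{eq:Pt2}), and $X$ is reflexive by hypothesis, so the lemma yields a bounded operator $\pi$ with $P(t)f\to\pi f$ weakly for every $f\in X$. Passing to the weak limit in $P(s)P(t)f=P(s+t)f$ gives $P(s)\pi=\pi P(s)=\pi$ for all $s\ge 0$, and applying this identity inside the defining limit also gives $\pi^2=\pi$, so $\pi$ is a bounded projection.

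For the identity $\Ran(\pi)=\Ker(L)$: if $f\in\Ker(L)$, uniqueness for the abstract Cauchy problem forces $P(t)f=f$ for all $t\ge 0$, hence $\pi f=f\in\Ran(\pi)$. Conversely, any $f=\pi g\in\Ran(\pi)$ satisfies $P(s)f=P(s)\pi g=\pi g=f$ (weakly, hence as an element of $X$) for every $s\ge 0$, so $t\mapsto P(t)f$ is constant; differentiating at $t=0$ places $f$ in $\Dom(L)$ with $Lf=0$. For the identity $\Ker(\pi)=\overline{\Ran(L)}$, the inclusion $\overline{\Ran(L)}\subseteq\Ker(\pi)$ is where Corollary~\ref{cor:spectral} does the heavy lifting: the bound $\|tLP(t)\|\lesssim (1+t)e^{-t}$ implies $\|LP(t)\|\to 0$ in operator norm as $t\to\infty$, and so for $g\in\Dom(L)$, $P(t)Lg=LP(t)g\to 0$ in norm; hence $\pi(Lg)=0$, and since $\Ker(\pi)$ is closed the inclusion extends to $\overline{\Ran(L)}$.

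The reverse inclusion $\Ker(\pi)\subseteq\overline{\Ran(L)}$ I would handle via Hahn--Banach and duality. Suppose for contradiction that $f\in\Ker(\pi)$ but $f\notin\overline{\Ran(L)}$; then there exists $f^*\in X^*$ annihilating $\Ran(L)$ with $\langle f,f^*\rangle\ne 0$. Annihilating $\Ran(L)$ is precisely the statement that $f^*\in\Dom(L^*)$ with $L^*f^*=0$. Since $X$ is reflexive, the adjoint semigroup $(P(t)^*)_{t\ge 0}$ is a uniformly bounded $C_0$-semigroup with generator $-L^*$ (see \cite[Proposition~I.5.14]{EngNag}), and running the entire argument above for this dual semigroup yields a dual projection $\pi^*$ (which moreover coincides with the Banach adjoint of $\pi$ by taking adjoints in the defining weak limit) satisfying $\Ran(\pi^*)=\Ker(L^*)$. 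Hence $\pi^*f^*=f^*$, and so $\langle f,f^*\rangle=\langle f,\pi^*f^*\rangle=\langle\pi f,f^*\rangle=0$, contradicting $\langle f,f^*\rangle\ne 0$. The only mildly delicate point is the interplay between the weak limit defining $\pi$ and the closedness of $L$, but Corollary~\ref{cor:spectral} upgrades control of $LP(t)$ to operator-norm decay, making the direct inclusion almost effortless and leaving only a routine duality argument for the reverse one.
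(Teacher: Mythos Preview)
Your proof is correct. The existence argument and the identification $\Ran(\pi)=\Ker(L)$ follow the paper's approach closely; the only cosmetic difference is that you first extract the operator identities $P(s)\pi=\pi P(s)=\pi$ and then specialise, whereas the paper runs the weak computation $\langle e^{-tL}f,\phi\rangle=\lim_{s}\langle a_{s+t}(A,B)f,\phi\rangle=\langle\pi f,\phi\rangle$ directly for each $f\in\Ran(\pi)$.

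For $\overline{\Ran(L)}=\Ker(\pi)$ your route is somewhat more elaborate than what the paper has in mind (it merely says the proof is ``equally simple''). You invoke Corollary~\ref{cor:spectral} to obtain $\|LP(t)\|\to 0$ in operator norm for one inclusion, and a Hahn--Banach/duality argument (running the whole construction on $X^*$) for the other. This is valid, but neither ingredient is really needed: differentiating the already-established identity $\pi P(s)=\pi$ at $s=0$ on $\Dom(L)$ gives $\pi L=0$ immediately, while for the reverse inclusion one observes that $f-P(t)f\in\overline{\Ran(L)}$ for all $f\in X$ (first on $\Dom(L)$ via $f-P(t)f=\int_0^t LP(s)f\,\dd s$, then by density) and lets $t\to\infty$ using weak closedness of $\overline{\Ran(L)}$. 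Your approach trades these soft semigroup facts for the quantitative decay of Corollary~\ref{cor:spectral} and an explicit duality step; it works, but it imports more than the statement requires.
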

 \begin{proof}
For all $t_1,t_2\ge 0$, \eqref{eq:asas}
 implies $$e^{-t_1L}\circ a_{{t_2}}(A,B) =  a_{{t_2}}(A,B)\circ e^{-t_1L} = a_{t_1+t_2} (A,B).$$
Hence the first assertion follows from the lemma. 

The second assertion is proved by a routine semigroup argument. If $f\in \Ker(L)$, then $a_t(A,B)f = e^{-tL}f =f$ implies $\pi(f) = f$, and 
conversely if $\pi(f) = f$, then for all $t\ge 0$ and $\phi\in X^*$ we have 
\begin{align*}\lb e^{-tL}f,\phi\rb 
& = \lb f, e^{-tL^*}\phi\rb= \lim_{s\to\infty}\lb a_s(A,B)f, e^{-tL^*}\phi\rb 
\\ & = \lim_{s\to\infty} \lb a_{s+t}(A,B)f,\phi\rb = \lb \pi(f),\phi\rb = \lb f,\phi\rb
\end{align*}
 and therefore 
$e^{-tL}f=f$. This implies $f\in \Dom(L)$ and $Lf = 0$. This proves $\Ker(L)=\Ran(\pi)$. The proof that $\overline{\Ran(L)}= \Ker(\pi)$ is equally simple.
 \end{proof}

In particular we see that $X$ admits the direct sum decomposition $\Ker(L)\oplus \overline{\Ran(L)}$. Such a decomposition holds
for every sectorial operator on a reflexive Banach space (see \cite[Proposition 10.1.9]{HNVW2}); the point of the proposition is to identify the 
associated projection as being given by $\pi$.

\section{Transference}\label{sec:transf}

Let $X$ be a Banach space. For functions $a\in \mathscr{S}(\R^{2d})$, 
the {\em twisted convolution} with a function $g\in C_{\rm c}(\R^{2d};X)$ is defined by
\begin{align}\label{eq:twisted}
C_{a}g(x,\xi) :=&\,\frac1{(2\pi)^d}\int_{\R^{2d}} e^{\frac12i(x\eta-y\xi)} a(y,\eta )g(x-y,\xi-\eta )\ud y\ud \eta .
\end{align}
By the pointwise inequality $|C_a g| \le|a|*|g|$ and Young's inequality, $C_{a}$ extends to a bounded operator on $L^p(\R^{2d};X)$ for all $1\le p\le \infty$.

We begin with a Coifman--Weiss type transference result.

\begin{proposition}[Transference]\label{prop:transf} 
Let $(A,B)$ be a Weyl pair of dimension $d$ on a Banach space $X$ and set $M_A := \sup_{t\in \R} \n e^{itA}\n$ and $M_B := \sup_{t\in \R} \n e^{itB}\n$. Let $1\le p<\infty$.
\begin{enumerate}
\item For all $a\in \mathscr{S}(\R^{2d})$ we have 
\begin{align*}
\n a(A,B)\n \le M_A^2 M_B^2 \n C_{\wh a} \n_{\calL(L^p(\R^{2d};X))}.
\end{align*}
\item Let $\{a_j: \,j\in J\}$ be a family of functions in $ \mathscr{S}(\R^{2d})$. If the family of twisted convolutions $\{C_{\wh a_j},\, j\in J\}$ is $R$-bounded in $\calL(L^p(\R^{2d};X))$, then
$\{a_j(A,B):\, j\in J\}$ is $R$-bounded in $\calL(X)$, and in that case
\begin{align*}
\mathscr{R}_p(a_j(A,B):\,j\in J) \le M_A^2 M_B^2 \mathscr{R}_p(C_{\wh a_j}:\, j\in J).
\end{align*}
\item
Let $\{a_j: \,j\in J\}$ be a family of functions in $ \mathscr{S}(\R^{2d})$. If the family of twisted convolutions $\{C_{\wh a_j},\, j\in J\}$ satisfies
$$
\E \Big\|\sum_{j \in J} \eps_{j} C_{\wh a_j}g\Big\| \lesssim \|g\| \quad \forall g\in L^{p}(\R^{2d};X),
$$
then
$$\E \Big\|\sum_{j \in J} \eps_{j} a_{j}(A,B)f\Big\| \lesssim \|f\| \quad \forall f \in X.
$$
\end{enumerate}

\end{proposition}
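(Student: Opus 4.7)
The plan is to carry out a Coifman--Weiss style transference, which I would execute as follows. Fix $f\in X$ and a cutoff $\phi\in C_{\rm c}(\R^{2d})$ with $0\le \phi\le 1$, and set
\[ F(x,\xi) := \phi(x,\xi)\,e^{-i(xA+\xi B)}f,\qquad (x,\xi)\in \R^{2d}.\]
The key algebraic step is to compute $C_{\wh a}F$ at an arbitrary base point $(x_0,\xi_0)$. Substituting the definition of $F$ into the formula \eqref{eq:twisted} for the twisted convolution and invoking the composition rule \eqref{eq:sigma} with $u=-x_0,\,v=-\xi_0,\,u'=y,\,v'=\eta$ to rewrite
\[ e^{-i((x_0-y)A+(\xi_0-\eta)B)} = e^{\frac12 i(y\xi_0-x_0\eta)}\,e^{-i(x_0A+\xi_0B)}\,e^{i(yA+\eta B)},\]
one sees that the phase $e^{\frac12 i(x_0\eta-y\xi_0)}$ appearing in the twisted convolution kernel cancels exactly against the phase produced by the composition. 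This leaves the crucial identity
\[ C_{\wh a}F(x_0,\xi_0) = e^{-i(x_0A+\xi_0B)}\cdot \frac{1}{(2\pi)^d}\int_{\R^{2d}}\wh a(y,\eta)\,\phi(x_0-y,\xi_0-\eta)\,e^{i(yA+\eta B)}f\,\ud y\,\ud\eta.\]

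For part (1) I would then choose $\phi=\phi_{R,T}$ equal to $1$ on $B(0,R+T)$ and supported in $B(0,R+2T)$, and set $S:=B(0,R)$. For $(x_0,\xi_0)\in S$ and $|(y,\eta)|\le T$ we have $\phi_{R,T}(x_0-y,\xi_0-\eta)=1$, so replacing $\phi_{R,T}$ by $1$ in the integrand produces an error of norm at most $(2\pi)^{-d}M_AM_B\|f\|\delta_T$, where $\delta_T:=\int_{|(y,\eta)|>T}|\wh a(y,\eta)|\,\ud y\,\ud\eta\to 0$ as $T\to\infty$ since $\wh a\in\mathscr{S}(\R^{2d})$. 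Combined with $\|e^{i(x_0A+\xi_0B)}\|\le M_AM_B$ this gives, for every $(x_0,\xi_0)\in S$,
\[ \|a(A,B)f\|\le M_AM_B\|C_{\wh a}F(x_0,\xi_0)\| + (2\pi)^{-d}M_AM_B\|f\|\delta_T.\]
Integrating the $p$th power over $S$, applying the boundedness of $C_{\wh a}$ on $L^p(\R^{2d};X)$, and using $\|F\|_{L^p}\le M_AM_B\|f\|\,|B(0,R+2T)|^{1/p}$ yields
\[ \|a(A,B)f\|\lesssim M_A^2M_B^2\Bigl(\frac{|B(0,R+2T)|}{|S|}\Bigr)^{1/p}\|C_{\wh a}\|_{\calL(L^p(\R^{2d};X))}\|f\| + M_AM_B\|f\|\delta_T.\]
Letting $R\to\infty$ with $T$ fixed shrinks the volume ratio to $1$, and then letting $T\to\infty$ kills the error, yielding part (1) with the constant $M_A^2M_B^2$.

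For parts (2) and (3) I would apply the same transference to a Rademacher sum. Setting $F_j(x,\xi):=\phi_{R,T}(x,\xi)\,e^{-i(xA+\xi B)}f_j$, the key identity yields, for $(x_0,\xi_0)\in S$,
\[ \sum_j\eps_j a_j(A,B)f_j = e^{i(x_0A+\xi_0B)}\sum_j\eps_j C_{\wh a_j}F_j(x_0,\xi_0) + \text{error},\]
where the error is controlled exactly as before. Taking $L^p(\Om\times S;X)$-norms, invoking the $R$-boundedness hypothesis for $\{C_{\wh a_j}\}$, and crucially using that $\sum_j\eps_j F_j(x,\xi) = \phi_{R,T}(x,\xi)\,e^{-i(xA+\xi B)}\sum_j\eps_j f_j$ (so the group operator factors out of the Rademacher sum and contributes only another $M_AM_B$), the volume-ratio and error-term limits go through unchanged to give part (2) with the stated constant $M_A^2M_B^2\mathscr{R}_p(C_{\wh a_j})$. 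Part (3) is identical with all $f_j$ replaced by a single $f$, using $g=F$ in the hypothesis on $\{C_{\wh a_j}\}$.

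The main point of the proof is the exact phase cancellation in the key identity; the only real technical obstacle is coordinating the two limits $R\to\infty$ (so that the volume ratio tends to $1$) and $T\to\infty$ (so that the $\wh a$-tail error $\delta_T$ vanishes) without losing the constant. Because $a\in\mathscr{S}(\R^{2d})$ guarantees rapid decay of $\wh a$, this is routine bookkeeping rather than a genuine difficulty.
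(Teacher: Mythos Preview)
Your proposal is correct and follows essentially the same Coifman--Weiss transference strategy as the paper: both arguments rely on the composition identity \eqref{eq:sigma} to turn $e^{-i((x_0-y)A+(\xi_0-\eta)B)}f$ into $e^{-i(x_0A+\xi_0B)}e^{i(yA+\eta B)}f$ with a phase that cancels exactly against the kernel of the twisted convolution, then average over a large region and let a volume ratio tend to $1$ while a tail error from truncating $\wh a$ vanishes. The only cosmetic differences are that the paper works with cubes $[-M,M]^2$, $[-N,N]^2$ and inserts the indicator $\chi_{M+N}$ after the fact, whereas you use balls and build the cutoff $\phi_{R,T}$ into the definition of $F$ from the start; the bookkeeping and the resulting constant $M_A^2M_B^2$ are identical.
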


\begin{proof}
For $r>0$ we will use the short-hand notation 
$[-r,r]^{2} = \{(x,\xi)\in \R^{2d}:\, |x|\le r, \, |\xi|\le r\}$.
The elementary estimate $  \|a(A,B)\|\leq M_AM_B \|\wh a\|_{1} $ shows that, for any given $\eps>0$,
we may choose
$N>0$ so large that the operator
\[
  a(A,B)_{(N)} f: = \frac1{(2\pi)^d}\int_{\complement [-N,N]^2} \wh a(u,v)e^{i(uA+v B)}
 f\ud u\ud v
\]
satisfies
\[
  \|a(A,B)_{(N)}\|\leq \frac{M_AM_B}{(2\pi)^d} \int_{\complement [-N,N]^2} |\wh a(u,v)| \ud u\ud v
  < \varepsilon .
\]
We will therefore concentrate on estimating the norm of the operator
\[
  a(A,B)^{(N)} f: = \frac1{(2\pi)^d}\int_{[-N,N]^2} \wh a(u,v)e^{i(uA+v B)}
 f\ud u\ud v.
\]
Accordingly set $\wh a^{(N)} := 1_{[-N,N]^{2}}\wh a$.
Choose $M$ so large that
$\frac{M+N}{M}\leq1+\varepsilon$. Let us write $U(u,v) = e^{i(uA+v B)}$ for brevity.  
By \eqref{eq:sigma} we have 
$ U(u,v)\circ U(-u,-v) = I$
and therefore
\begin{equation}
  \|f\| \leq M_AM_B\|U(-u,-v)f\|, \quad f\in X.
\end{equation}
Averaging over $[-M,M]^2$, for all $1\le p<\infty$ and $f\in X$ we obtain 
\begin{align*}
\! &  \| a(A,B)^{(N)} f\|^p\\ &\leq \frac{M_A^pM_B^p}{(2M)^{2d}}\int_{[-M,M]^2} \|U(-u,-v)a(A,B)^{(N)}f\|^p\ud u\ud v \\
  &= \frac{M_A^pM_B^p}{(2M)^{2d}}\int_{[-M,M]^2} \Big\| \frac1{(2\pi)^d} \int_{\R^{2d}}
     \wh a^{(N)}(y,\eta )U(-u,-v)U(y,\eta )f\ud y\ud \eta \,\Big\|^p\ud u \ud v  \\
  &=    \frac{M_A^pM_B^p}{(2M)^{2d}}\int_{[-M,M]^2}\! \Big\|  \frac1{(2\pi)^d}\!\int_{\R^{2d}}
     e^{\frac12i(u\eta-yv)}\wh a^{(N)}(y,\eta )U(y-u,\eta -v)f\ud y\ud \eta \,\Big\|^p\!\!\ud u\ud v  \\
\intertext{using \eqref{eq:sigma}. 
Also $\one_{[-M-N,M+N]^2 }(y-u,\eta-v)=1$ if
  $(u,v)\in[-M,M]^2$ and $(y,\eta )\in[-N,N]^2$, so that with
  $\chi_{M+N}:=\one_{[-M-N,M+N]^2}$ 
the last expression can be rewritten as}
  &= \frac{M_A^pM_B^p}{(2M)^{2d}}\int_{[-M,M]^2}\Bigl\n  \frac1{(2\pi)^d}\int_{\R^{2d}}
e^{\frac12i(u\eta-yv)}\wh a^{(N)}(y,\eta )
\\ & \qquad\qquad\qquad
\times \bigl[\chi_{M+N}(y-u,\eta-v)U(y-u,\eta -v)f\bigr]\ud y\ud \eta \, \Bigr\n^p\ud u\ud v \\
  &=\frac{M_A^pM_B^p}{(2M)^{2d}}\int_{[-M,M]^2}
    \| C_{\wh a^{(N)}} [\chi_{M+N}(\cdot,\cdot) U(\cdot,\cdot)f ](u,v)\|^p\ud u\ud v \\
  &\leq \frac{M_A^pM_B^p}{(2M)^{2d}}
    \| C_{\wh a^{(N)}} [\chi_{M+N} Uf]\|_{L^p(\R^{2d};X)}^p \\
  &\stackrel{\rm(i)}{\leq }\frac{M_A^pM_B^p}{(2M)^{2d}}
    \|C_{\wh a^{(N)}} \|_{\calL(L^p(\R^{2d};X))}^p \int_{[-M-N,M+N]^2}  \|U(u,v)f\|^p\ud u\ud v \\
  &\leq \frac{M_A^pM_B^p}{(2M)^{2d}}\|C_{\wh a^{(N)}} \|_{\calL(L^p(\R^{2d};X))}^p
    \, (2(M+N))^{2d}M_A^pM_B^p\|f\|^p \\
  & \leq (1+\eps)^{2d} M_A^{2p}M_B^{2p} \|C_{\wh a^{(N)}} \|_{\calL(L^p(\R^{2d};X))}^p\,\|f\|^p.
\end{align*}
It follows that 
\begin{align*} \n a(A,B)f\n 
& \le \n a(A,B)_{(N)}f\n +\n a(A,B)^{(N)}f\n 
\\ & \le \eps \n f\n + (1+\eps)^{2d/p}M_A^{2}M_B^{2} \|C_{\wh a^{(N)}} \|_{\calL(L^p(\R^{2d};X))}  \n f\n.
\end{align*}
Letting $N\to \infty$ in this estimate, and then letting $\eps\downarrow 0$, the desired estimate in (1) is obtained.

\smallskip
Part (2) is proved in exactly the same way.
We replace $a$ by $\sum_{n=1}^N \eps_n a_{j_n}$
(where $j_1,\dots,j_N\in J$ and $(\eps_n)_{n=1}^N$ is a Rademacher sequence) and instead of using one fixed 
$f$ we use a sequence $(f_n)_{n=1}^N$ to build Rademacher sums; 
instead of estimating with operator norms in (i), we estimate with $R$-bounds.
The same reasoning applies to part (3).
\end{proof}

The next lemma expresses the twisted convolution $C_{\wh a}$ in terms of the standard pair on $L^2(\R^{2d})$:

\begin{lemma} 
\label{lem:twiststand}
Let
 $((Q_1,Q_2), (P_1,P_2))$ be the standard pair of dimension $2d$ on $L^2(\R^{2d}),$ i.e.,
\begin{align*}
Q_{1,j}f(x,\xi) = x_jf(x,\xi), \quad & Q_{2,j}f(x,\xi) = \xi_jf(x,\xi), \\ 
P_{1,j}f(x,\xi) = \frac1i \frac{\partial f}{\partial x_j}(x,\xi),\quad   &P_{2,j}f(x,\xi) = \frac1i \frac{\partial f}{\partial \xi_j}(x,\xi),
\end{align*}
for $1\le j\le d$.
 The pair $(-\frac12 Q_2-P_1,\frac12Q_1-P_2)$ is a Weyl pair of dimension $d$ on $L^2(\R^{2d})$, 
 and for all $a\in \mathscr{S}(\R^{2d})$ we have 
$$ C_{\wh a} = a\Bigl(-\frac12 Q_2-P_1,\frac12Q_1-P_2\Bigr).$$ 
\end{lemma}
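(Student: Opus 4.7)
The plan is to verify the Weyl pair axioms by direct computation using explicit formulas for the generated groups, and then to evaluate $a(A,B)f$ directly from the defining integral, showing that it coincides with $C_{\wh a}$ after a straightforward change of variables.

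Denote for brevity $A_j := -\frac12 Q_{2,j} - P_{1,j}$ and $B_j := \frac12 Q_{1,j} - P_{2,j}$. Since, for each fixed $j$, the two summands making up $A_j$ (respectively $B_j$) act on different variables and have commuting bounded $C_0$-groups, an elementary commuting-generator argument gives
\begin{align*}
e^{itA_j}f(x,\xi) &= e^{-\frac{it}{2}\xi_j}\,f(x-te_j,\xi),\\
e^{itB_j}f(x,\xi) &= e^{\frac{it}{2}x_j}\,f(x,\xi-te_j).
\end{align*}
These groups are unitary on $L^2(\R^{2d})$, so the boundedness requirement is automatic. The first two families of commutation relations in \eqref{eq:CCR} are immediate from the product formulas. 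For the mixed relation, I would simply compose $e^{isA_j}$ and $e^{itB_k}$ in both orders: each side produces $f(x-se_j,\xi-te_k)$ multiplied by an exponential prefactor, and the two prefactors differ precisely by $e^{-ist\delta_{jk}}$, which is exactly what is required. This establishes that $(A,B)$ is a Weyl pair of dimension $d$.

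Next, I would compute the joint Schrödinger-type operator $e^{i(uA+vB)}$. Because the components of $A$ (respectively $B$) commute, the product formulas above iterate cleanly to give $e^{iuA}f(x,\xi) = e^{-\frac{i}{2}u\cdot\xi}f(x-u,\xi)$ and $e^{ivB}f(x,\xi) = e^{\frac{i}{2}v\cdot x}f(x,\xi-v)$. Composing and inserting the factor $e^{\frac{i}{2}uv}$ from \eqref{eq:abstr-Schr}, the $uv$ terms cancel and one obtains
\begin{equation*}
e^{i(uA+vB)}f(x,\xi) \;=\; e^{\frac{i}{2}(v\cdot x - u\cdot\xi)}\,f(x-u,\xi-v).
\end{equation*}

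Plugging this into the definition of $a(A,B)$ yields
\begin{equation*}
a(A,B)f(x,\xi) \;=\; \frac{1}{(2\pi)^d}\int_{\R^{2d}} \wh a(u,v)\,e^{\frac{i}{2}(v\cdot x - u\cdot\xi)}\,f(x-u,\xi-v)\ud u\ud v,
\end{equation*}
which, after relabelling $(u,v)\mapsto(y,\eta)$, is precisely the formula \eqref{eq:twisted} defining $C_{\wh a}f(x,\xi)$. The only step that requires a little care is the bookkeeping of the signs and of the factor $\frac{1}{2}$ in the exponential phases, since these are exactly what must conspire (together with the Baker--Campbell--Hausdorff factor $e^{\frac{i}{2}uv}$) to reproduce the symplectic phase in the twisted convolution; this is the main, though entirely mechanical, obstacle. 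No convergence issues arise because $a\in\mathscr{S}(\R^{2d})$ forces $\wh a\in L^1(\R^{2d})$ and $f\in L^2(\R^{2d})$, so Fubini applies throughout.
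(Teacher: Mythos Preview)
Your proof is correct and follows essentially the same approach as the paper: both proceed by writing down the explicit action of the groups $e^{iuA}$ and $e^{ivB}$ on $L^2(\R^{2d})$, composing them with the Baker--Campbell--Hausdorff factor $e^{\frac{i}{2}uv}$, and recognising the resulting integrand as the twisted convolution kernel. The only cosmetic difference is that the paper decomposes $e^{iuA}$ and $e^{ivB}$ further into the commuting factors $e^{-\frac12 iuQ_2}e^{-iuP_1}$ and $e^{\frac12 ivQ_1}e^{-ivP_2}$ before evaluating, whereas you compute the action of each composite group in one step; the end formula and the logic are identical.
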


\begin{proof}
The proof of the first assertion is immediate.
For all $a\in \mathscr{S}(\R^{2d})$ and $g\in L^2(\R^{2d})$,
\begin{align*}
\ & a\Bigl(-\frac12 Q_2-P_1,\frac12Q_1-P_2\Bigr)g(x,\xi) 
\\ & \qquad =\frac1{(2\pi)^d}\int_{\R^{2d}} \wh a(u,v)e^{i(u(-\frac12Q_2-P_1)+v(\frac12 Q_1-P_2))}g(x,\xi) \ud u\ud v 
\\ & \qquad =\frac1{(2\pi)^d}\int_{\R^{2d}} \wh a(u,v)e^{\frac12iuv}e^{iu(-\frac12Q_2-P_1)}e^{iv(\frac12Q_1-P_2)}g(x,\xi) \ud u\ud v   
\\ & \qquad = \frac1{(2\pi)^d}\int_{\R^{2d}} \wh a(u,v)e^{\frac12iuv}e^{-\frac1{2}iuQ_2} e^{-iuP_1}e^{\frac1{2}iv Q_1} e^{-iv P_2}g(x,\xi) \ud u\ud v 
\\ & \qquad = \frac1{(2\pi)^d}\int_{\R^{2d}} \wh a(u,v) e^{\frac1{2}iv Q_1}e^{-\frac1{2}iuQ_2} e^{-iuP_1} e^{-iv P_2}g(x,\xi) \ud u\ud v 
\\ & \qquad = \frac1{(2\pi)^d}\int_{\R^{2d}} \wh a(u,v)e^{\frac12i(v x-\xi u)}g(x-u,\xi-v) \ud u\ud v 
\\ & \qquad = C_{\wh a}g(x,\xi).
\end{align*} 
\end{proof}

In the setting of the lemma, 
by the Stone--von Neumann theorem (see \cite[Theorem 14.8]{Hall}),
 there exist a countable index set $L$ and an orthogonal direct sum decomposition 
 $$ L^2(\R^{2d}) = \bigoplus_{\ell\in L} H_{\ell},$$
 as well as unitary operators $U_\ell: H_\ell \to L^2(\R^d)$, such that for all $\ell\in L$ the following assertions hold:
 \begin{enumerate}
  \item $H_{\ell}$ is invariant under each of the groups $e^{it(-\frac12 Q_{2,j}-P_{1,j})}$ and
  $e^{it(\frac12Q_{1,j}-P_{2,j})}$;
  \item $U_\ell$ establishes a unitary equivalence of these groups on $H_\ell$ with the groups $e^{itQ_j}$ and $e^{itP_j}$
  on $L^2(\R^d)$, where $(Q,P)$ is the standard pair on $L^2(\R^d)$. 
 \end{enumerate}
As a direct consequence we obtain that, for all $\ell\in L$ and $a\in \mathscr{S}(\R^{2d})$:
\begin{enumerate}
  \item[(1)$'$] $H_{\ell}$ is invariant under $a(-\frac12 Q_{2}-P_{1},\frac12Q_{1}-P_{2})$;
  \item[(2)$'$] $U_\ell$ establishes a unitary equivalence of the restriction of $a(-\frac12 Q_{2}-P_{1},\frac12Q_{1}-P_{2})$
to  $H_\ell$ and the operator $a(Q,P)$ on $L^2(\R^d)$. 
 \end{enumerate}
As a result we obtain
\begin{equation}\label{eq:scalar}
\begin{aligned}
\n C_{\wh a}\n_{\calL(L^2(\R^{2d}))} 
& = \Big\n a(-\frac12 Q_{2}-P_{1},\frac12Q_{1}-P_{2})\Big\n_{\calL(L^2(\R^{2d}))} 
\\ & = \sup_{\ell\in L} \Big\n a(-\frac12 Q_{2}-P_{1},\frac12Q_{1}-P_{2})\Big|_{H_\ell}\Bigr\n_{\calL(H_\ell)} 
 = \n a(Q,P)\n_{\calL(L^2(\R^{d}))}.
\end{aligned}
\end{equation}

\begin{remark}
In the next section we address the problem of estimating the norm of $C_{\wh a}$ in the vector-valued setting. Here we wish to point out the general fact that 
the identity \eqref{eq:scalar} has a simple, albeit not very useful (cf. the concluding remark at the end of the section), vector-valued extension in terms of spaces of $\gamma$-radonifying operators. These are defined  as follows (comprehensive treatments are given in \cite{HNVW2, Nee}). 
Let $H$ be a Hilbert space with inner product $(\cdot|\cdot)$ and $X$ be a Banach space. Every finite rank operator $T:H\to X$ can be represented as
$$ Th = \sum_{n=1}^N (h|h_n)x_n$$
for some orthonormal sequence $(h_n)_{n=1}^N$ in $X$, and some sequence $(x_n)_{n=1}^N$ in $X$.
For such operators $T$ we define
$$ \n T\n_{\gamma(H,X)}^2 := \E \Big\n \sum_{n=1}^N \gamma_n x_n\Big\n^2,
$$
where $(\gamma_n)_{n=1}^N$ is a sequence of independent standard normal random variables (taken real-valued if the scalar field is $\R$ and complex-valued if the scalar field is $\C$; once again, one could insist on using real-valued standard normal variables at the expense of different constants). It is easy to see that this gives a well-defined norm on the space of finite rank operators from $H$ to $X$. Its completion is denoted by $\gamma(H,X)$.

If $X$ is a Hilbert space, then $\gamma(H,X)$ is isometric in a natural way to the space of Hilbert--Schmidt operators from $H$ to $X$, and if $X = L^p(M,\mu)$ with $1\le p<\infty$, then one has a natural isomorphism of Banach spaces
$$\gamma(H, L^p(M,\mu)) \simeq L^p(M,\mu;H).$$

It is not hard to see (see
\cite[Theorem 9.6.1]{HNVW2}) that if $S: H\to H$ is a bounded operator, then the mapping $ h \otimes x \mapsto Sh \otimes x$
uniquely extends to a bounded operator $\wt S\in \calL(\gamma(H^*,X))$ of the same norm. Here,
$H^*$ is the Banach space dual of $H$. Applying this construction to the twisted convolutions $C_a$ and the operators $a(Q,P)$ with $a\in \mathscr{S}(\R^{2d})$, viewed as a bounded operators on the Hilbert spaces 
$L^2(\R^{2d})$ and $L^2(\R^d)$ respectively, and identifying the duals of these spaces with the spaces themselves via the duality $\lb f,g\rb = \int fg$ (no conjugation here),
we obtain well-defined extensions of these operators
to bounded operators on $\gamma(L^2(\R^{2d}),X)$ and $\gamma(L^2(\R^{d}),X)$ of the same norms.
Thus \eqref{eq:scalar} self-improves to
$$ \n C_{\wh a}\n_{\calL(\gamma(L^2(\R^{2d}),X))} =  \n a(Q,P)\n_{\calL(\gamma(L^2(\R^{d}),X))}.$$

This identity suggests that we could try to bound the Weyl calculus in terms of the
$\gamma(L^2(\R^{2d}),X))$-norm
of the twisted convolution. This is possible under a $\gamma$-boundedness assumption:
\end{remark}

\begin{proposition}[$\gamma$-Transference]\label{prop:g-transf} 
Let $(A,B)$ be a Weyl pair of dimension $d$ on a Banach space $X$.
If the set
$$ \{e^{i(uA+vB)}:\, (u,v)\in \R^{2d}\}$$
is $\gamma$-bounded, with $\gamma$-bound $\Gamma$, then
for all $a\in \mathscr{S}(\R^{2d})$ we have 
\begin{align*}
\n a(A,B)\n \le \Gamma^2\n C_{\wh a} \n_{\calL(\gamma(L^2(\R^{2d}),X))}.
\end{align*}
\end{proposition}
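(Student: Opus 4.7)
My plan is to mirror the proof of Proposition~\ref{prop:transf}(1): replace $L^{p}(\R^{2d};X)$ throughout by the $\gamma$-radonifying space $\gamma(L^{2}(\R^{2d}),X)$, replace the uniform boundedness of $\{U(u,v):=e^{i(uA+vB)}\}$ by its $\gamma$-boundedness, and replace the normalised cube average $(2M)^{-2d}\int_{[-M,M]^{2}}(\cdot)\ud u\ud v$ by testing against the unit-$L^{2}$-norm cutoff $\phi_{M}:=(2M)^{-d}\chi_{[-M,M]^{2}}$. The fundamental input, established exactly as in the proof of Proposition~\ref{prop:transf}, is the pointwise identity
$$U(-u,-v)a(A,B)f = C_{\wh a}[Uf](u,v),\qquad Uf(u,v):=U(u,v)f,$$
or equivalently $a(A,B)f = U(u,v)\,C_{\wh a}[Uf](u,v)$ for every $(u,v)\in\R^{2d}$. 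Since $\{U(u,v)\}$ is $\gamma$-bounded with bound $\Gamma$ and coincides with its inverse family $\{U(-u,-v)\}$, the $\gamma$-multiplier theorem (see \cite{HNVW2}) makes pointwise multiplication by $U(\cdot)$ a bounded isomorphism on $\gamma(L^{2}(\R^{2d}),X)$ of norm at most $\Gamma$ with inverse of norm at most $\Gamma$.

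Before running the main argument I would reduce to the case of compactly supported $\wh a$ via a smooth Fourier-side truncation $\wh{a_{N}}:=\psi(\cdot/N)\wh a$: the approximations converge to $a(A,B)$ in $\calL(X)$ through $\|\wh a-\wh{a_{N}}\|_{L^{1}}\to 0$, and the twisted convolutions converge to $C_{\wh a}$ in $\calL(\gamma(L^{2}(\R^{2d}),X))$ through the Young-type bound $\|C_{g}\|_{\calL(\gamma(L^{2}(\R^{2d}),X))}\lesssim\|g\|_{L^{1}}$. This last estimate itself follows because $C_{g}$ is a Bochner integral of modulation-composed-with-translation operators, each of which is an isometry on $L^{2}(\R^{2d})$ and hence acts isometrically on $\gamma(L^{2}(\R^{2d}),X)$.

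Assuming now that $\wh a$ is supported in $[-N,N]^{2}$, the elementary tensor $\phi_{M}\otimes a(A,B)f\in\gamma(L^{2}(\R^{2d}),X)$ has $\gamma$-norm equal to $\|a(A,B)f\|$; by the pointwise identity it also equals the $\gamma$-element $U(\cdot)\,[\phi_{M}\cdot C_{\wh a}[Uf]](\cdot)$, so the multiplier bound yields
$$\|a(A,B)f\|\le\Gamma\,\|\phi_{M}\cdot C_{\wh a}[Uf]\|_{\gamma(L^{2}(\R^{2d}),X)}.$$
The support of $\wh a$ then gives $\phi_{M}\cdot C_{\wh a}[Uf]=\phi_{M}\cdot C_{\wh a}[\chi_{M+N}Uf]$ with $\chi_{M+N}:=\chi_{[-M-N,M+N]^{2}}$; the Kahane contraction principle absorbs $\phi_{M}$ at the cost of $\|\phi_{M}\|_{L^{\infty}}=(2M)^{-d}$, the hypothesis on $C_{\wh a}$ handles the convolution, and a second application of the $\gamma$-multiplier theorem to $\chi_{M+N}\otimes f$ yields $\|\chi_{M+N}\cdot Uf\|_{\gamma}\le\Gamma\|\chi_{M+N}\|_{L^{2}}\|f\|=\Gamma(2(M+N))^{d}\|f\|$. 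Assembling these estimates produces
$$\|a(A,B)f\|\le\Gamma^{2}\Bigl(1+\frac{N}{M}\Bigr)^{\!d}\,\|C_{\wh a}\|_{\calL(\gamma(L^{2}(\R^{2d}),X))}\,\|f\|,$$
which on letting $M\to\infty$ and then removing the Fourier cutoff completes the argument.

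The hard part will be the bookkeeping: checking that each function to which the $\gamma$-multiplier theorem is applied genuinely lies in $\gamma(L^{2}(\R^{2d}),X)$ rather than merely in $L^{\infty}(\R^{2d};X)$, and verifying that the two appearances of $\Gamma$ in the bound correspond to precisely the two uses of the multiplier theorem, mirroring the two appearances of $M_{A}M_{B}$ in Proposition~\ref{prop:transf}(1). Since both the $\gamma$-multiplier theorem and the Young-type bound rest on the invariance of the $\gamma$-norm under isometric substitutions in $L^{2}$ together with the Kahane contraction principle, no new analytic input beyond these standard facts should be required.
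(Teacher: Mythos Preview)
Your proposal is correct and follows precisely the route the paper indicates: the paper's own proof consists of the single sentence ``The proof is a routine adaptation of the proof of Proposition~\ref{prop:transf},'' and you have carried out that adaptation in detail, replacing the normalised $L^{p}$-average by the unit $L^{2}$-vector $\phi_{M}$, the two uses of uniform boundedness by two applications of the Kalton--Weis $\gamma$-multiplier theorem, and the $L^{p}$-operator norm of $C_{\wh a}$ by its $\calL(\gamma(L^{2}(\R^{2d}),X))$-norm via the ideal property. The bookkeeping concerns you flag (membership in $\gamma$, and the two appearances of $\Gamma$) are handled exactly as you describe.
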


Similar versions of parts (2) and (3) of Proposition \ref{prop:transf} hold. The proof is a routine adaptation of the proof of Proposition \ref{prop:transf}. 
As a corollary we obtain that, if the set $\{e^{i(uA+vB)}: \, (u,v)\in\R^d\}$ is $\gamma$-bounded, with $\gamma$-bound $\Gamma$, 
then for all $a\in \mathscr{S}(\R^{2d})$ we have
$$ \n a(A,B)\n \le \Gamma^2\n a(Q,P)\n_{\calL(\gamma(L^2(\R^{d}),X))}.$$
Admittedly, this result is unlikely to be useful: for the standard pair, the $\gamma$-bounded\-ness assumption is satisfied only for $p=2$ (by \cite[Proposition 8.1.16]{HNVW2}).

\section{$R$-Sectoriality of $L$}\label{sec:untwist}

To apply the transference theory from Section \ref{sec:transf}, ideally one needs to bound twisted convolution operators $C_{\wh a}$ acting on the Bochner spaces $L^{p}(\R^{2d};X)$ in terms of the norm of $a(Q,P)$ for the standard pair, i.e., one needs a vector-valued extension of \eqref{eq:scalar}. We do not know how to do this in general. The $L^p$-theory in the scalar-valued case, considered by Mauceri in \cite{mauceri80}, is already quite subtle and depends on Hilbert space-specific techniques to treat the $p=2$ case. Extending his theory to UMD-valued functions would be interesting in itself (for the new techniques that need to be developed) and would lead to general estimates for the Weyl calculus of Weyl pairs.
Here, we just focus on those twisted convolutions needed to study the semigroup generated by $-L=\frac12d-\frac12(A^{2}+B^{2})$. The symbols $a$ involved are such that $C_{\wh a}$ can be effectively ``untwisted". 
The main aim of this section is to prove the following result:

\begin{theorem}[$R$-Sectoriality]\label{thm:L-R-sect} 
Let $(A,B)$ be a Weyl pair on a UMD Banach lattice $X$. Then for all $\theta\in (0,\pi)$ the operator $L = \frac12(A^{2}+B^{2}) - \frac12d$ is $R$-sectorial of angle $\theta$.  Moreover, the set
$
\{(\frac{\pi}{2}-\theta)^{2d} \exp(-zL) \;;\; |arg(z)|<\theta\}
$
is R-bounded, with R-bound independent of $\theta$.
\end{theorem}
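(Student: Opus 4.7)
The plan is to combine the explicit formula $e^{-tL}=a_t(A,B)$ from Theorem \ref{thm:A2B2} with the transference result Proposition \ref{prop:transf}, reducing everything to $R$-boundedness of a specific family of twisted convolutions. Since $\lambda_z = (1-e^{-z})/(1+e^{-z})$ has strictly positive real part whenever $z$ lies in the open right half-plane, the symbol $a_z(x,\xi) = (1+\lambda_z)^d \exp(-\lambda_z(|x|^2+|\xi|^2))$ remains Schwartz for each such $z$, and analytic continuation of both sides of the semigroup formula yields $e^{-zL}=a_z(A,B)$ for all $z \in \Sigma_{\pi/2}$. A Gaussian Fourier transform computation then gives $\widehat{a_z}(u,v) = (1-e^{-z})^{-d}\exp(-(|u|^2+|v|^2)/(4\lambda_z))$, which retains the Gaussian form. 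By Proposition \ref{prop:transf}(2), to $R$-bound any finite family $\{e^{-z_j L}\}_{j \in J}$ with $z_j \in \Sigma_\theta$ in $\calL(X)$, it suffices to $R$-bound the corresponding family $\{C_{\widehat{a_{z_j}}}\}_{j \in J}$ of twisted convolutions in $\calL(L^p(\R^{2d};X))$ for some $1<p<\infty$.

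The second step is the \emph{untwisting} promised in the introduction: the Gaussian shape of $\widehat{a_z}$ is used to eliminate the symplectic phase $e^{\frac{i}{2}(x\eta-y\xi)}$ appearing in the definition \eqref{eq:twisted} of twisted convolution. Completing squares in the integral
\[
C_{\widehat{a_z}} g(x,\xi) = \frac{(1-e^{-z})^{-d}}{(2\pi)^d}\int_{\R^{2d}} e^{\frac{i}{2}(x\eta - y\xi)} e^{-(|y|^2+|\eta|^2)/(4\lambda_z)} g(x-y,\xi-\eta)\,dy\,d\eta
\]
permits $C_{\widehat{a_z}}$ to be decomposed as a composition of operators belonging to the standard vector-valued harmonic-analytic toolbox: multiplication by (complex) Gaussian factors in $(x,\xi)$, complex modulation, and convolution with a complex Gaussian in each of the $\R^d$ coordinates. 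Because $X$ is a UMD lattice, each of these constituents extends boundedly to $L^p(\R^{2d};X)$ --- the Gaussian convolutions through Bourgain's vector-valued Fourier multiplier theorem, the modulations as $L^p$-isometries, and the bounded multiplications through the Kahane contraction principle --- and their $R$-bounds are explicit functions of $\mathrm{Re}(\lambda_z)$ and $\mathrm{Im}(\lambda_z)$.

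Putting these bounds together, the $R$-bound of $\{C_{\widehat{a_z}}:\, z\in \Sigma_\theta\}$ grows polynomially in $1/\mathrm{Re}(\lambda_z)$ as $z$ approaches the imaginary axis, and each of the $2d$ Gaussian factors contributes one power, which accounts for the weight $(\frac{\pi}{2}-\theta)^{2d}$ in the statement. Combining with transference yields the second assertion of the theorem. The first assertion then follows from standard semigroup theory: $R$-boundedness of $\{e^{-zL} :\, z \in \Sigma_\theta\}$ on every proper subsector of $\Sigma_{\pi/2}$ implies that $L$ is $R$-sectorial of angle $\frac{\pi}{2}-\theta$ for every such $\theta$, hence $\omega_R(L)=0$, so $L$ is $R$-sectorial of angle $\theta$ for every $\theta \in (0,\pi)$ (cf.\ \cite[Theorem 10.3.4]{HNVW2}).

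The hard part will be the untwisting step. The symplectic twist in \eqref{eq:twisted} is precisely what makes the scalar-valued $L^p$-theory of twisted convolutions genuinely harder than ordinary Fourier multiplier theory (as discussed in the introduction and in \cite{mauceri80}), and a general UMD-valued extension is not yet available. The approach here can only succeed because the symbols $\widehat{a_z}$ associated to the semigroup are Gaussian, so that completing squares converts the twist into a finite combination of convolutions and modulations; for generic symbols the plan would break down. A secondary, purely technical, point is the precise tracking of the $(\frac{\pi}{2}-\theta)^{-2d}$ blow-up, which requires careful bookkeeping of each Gaussian factor passing through the untwisting.
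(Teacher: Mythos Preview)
Your first step --- analytic continuation of $e^{-zL}=a_z(A,B)$, computation of $\widehat{a_z}$, and reduction via Proposition \ref{prop:transf}(2) to $R$-bounding the family $\{C_{\widehat{a_z}}:z\in\Sigma_\theta\}$ on $L^p(\R^{2d};X)$ --- matches the paper exactly (this is Proposition \ref{prop:convR} combined with \cite[Proposition 10.3.3]{HNVW2}).

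The gap is in your untwisting step. Completing the square in
\[
-\tfrac{1}{4\lambda_z}|y|^2-\tfrac{i}{2}y\xi,\qquad -\tfrac{1}{4\lambda_z}|\eta|^2+\tfrac{i}{2}x\eta
\]
does pull out bounded Gaussian factors $e^{-\frac{\lambda_z}{4}|\xi|^2}$ and $e^{-\frac{\lambda_z}{4}|x|^2}$, but what remains in the integrand is still $e^{-|y|^2/(4\lambda_z)}e^{-\frac{i}{2}y\xi}\,e^{-|\eta|^2/(4\lambda_z)}e^{\frac{i}{2}x\eta}\,g(x-y,\xi-\eta)$: the phases $e^{-\frac{i}{2}y\xi}$ and $e^{\frac{i}{2}x\eta}$ couple the \emph{integration} variables $y,\eta$ with the \emph{output} variables $\xi,x$. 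This is not a modulation of $g$, nor a scalar multiplication in $(x,\xi)$, nor a convolution kernel independent of $(x,\xi)$; it is precisely the twist. No finite composition of the three operator types you list eliminates it, and your sketch does not say concretely which operators the composition consists of. A useful sanity check: your argument uses only UMD (Bourgain's multiplier theorem, Kahane contraction), never the lattice order, so if it worked it would prove Theorem \ref{thm:L-R-sect} for arbitrary UMD spaces --- but that is Open Problem (3) at the end of the paper.

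The paper's actual untwisting is Lemma \ref{lem:untwist-Rbd}, and it uses the lattice structure in an essential way: one has the pointwise domination $|C_{\widehat{a_z}}g|\le |\widehat{a_z}|*|g|$ in the lattice $X$, and then Khintchine--Maurey converts Rademacher sums to square functions, so that $R$-boundedness of $\{C_{\widehat{a_z}}\}$ is controlled by $R$-boundedness of ordinary convolution with $|\widehat{a_z}|$. Lemma \ref{lem:theta} then gives $(\tfrac{\pi}{2}-\theta)^{2d}|\widehat{a_z}|\lesssim b_t$ with $b_t$ a heat kernel (the power $2d$ arising as $d$ from $|1+\lambda_z|^d$ and $d$ from $(\cos\arg\lambda_z)^{-d}$, not from ``$2d$ Gaussian factors''), and $\{\mathcal{C}_{b_t}:t>0\}$ is $R$-bounded because $-\Delta\otimes I_X$ has a bounded $H^\infty$-calculus on $L^p(\R^{2d};X)$.
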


The only place in the proof where we use the lattice structure of $X$ is in 
the following lemma, which reduces the task of proving $R$-boundedness of twisted convolutions to proving $R$-boundedness of (standard) convolutions. 

 \begin{lemma}\label{lem:untwist-Rbd} 
 Let $X$ be a Banach lattice with finite cotype and let $1\le p<\infty$.
Suppose $(a_j)_{j\in J}$ and $(b_j)_{j\in J}$ are families of functions in $\mathcal{S}(\R^{2d})$ satisfying
$$
|{a}_{j}(y,\eta)| \leq |{b}_{j}(y,\eta)| \quad \forall y,\eta \in \R^{d},\ j\in J.
$$
If the family of (standard) convolution operators $(\mathcal{C}_{|b_j|})_{j\in J}$ on $L^p(\R^{2d};X)$ is $R$-bounded, with $R_p$-bound $\mathscr{R}_p$, 
then also the family $(C_{{a_j}})_{j\in J}$ on $L^p(\R^{2d};X)$ is $R$-bounded, with $R_p$-bound $\lesssim_{p,q,X} C^p\mathscr{R}_p^p$.
\end{lemma}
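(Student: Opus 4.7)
The plan is to reduce the twisted convolutions to ordinary (positive) convolutions via a pointwise lattice bound, and then to pass $R$-boundedness through the square-function equivalence in lattices of finite cotype.

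First I would extract from the definition \eqref{eq:twisted} a pointwise-in-$X$ bound. Since the twisting kernel $e^{\frac12 i(x\eta-y\xi)}$ has modulus one, the triangle inequality for $X$-valued Bochner integrals (valid in any Banach lattice: $|\int F|\le \int|F|$, where $|\cdot|$ denotes the lattice modulus) combined with the hypothesis $|a_j|\le|b_j|$ yields, for every $g\in L^p(\R^{2d};X)$,
\begin{equation*}
 |C_{a_j}g(x,\xi)| \;\le\; \frac{1}{(2\pi)^d}\int_{\R^{2d}}|a_j(y,\eta)|\,|g(x-y,\xi-\eta)|\,\dd y\,\dd\eta \;\le\; \mathcal{C}_{|b_j|}(|g|)(x,\xi)
\end{equation*}
in the lattice ordering of $X$. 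This is the step that eliminates the oscillatory phase and converts the problem into one about standard convolutions with nonnegative kernels.

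Second, I would invoke the Khintchine--Maurey square function equivalence (see, e.g., \cite[Theorem 7.2.13]{HNVW2}), which holds because $X$ is a Banach lattice of finite cotype: for any finite sequence $(y_n)\subseteq X$,
\begin{equation*}
 \E\Big\|\sum_n \eps_n y_n\Big\|_X^p \;\eqsim_{p,q,X}\; \Big\|\Big(\sum_n |y_n|^2\Big)^{1/2}\Big\|_X^p.
\end{equation*}
Applying this pointwise in $(x,\xi)$ to $y_n = C_{a_{j_n}}g_n(x,\xi)$, using the previous step together with the monotonicity of the $\ell^2$-functional in the lattice (if $0\le u_n\le v_n$ then $(\sum u_n^2)^{1/2}\le (\sum v_n^2)^{1/2}$), and then reconverting the resulting square function of $\bigl(\mathcal{C}_{|b_{j_n}|}|g_n|\bigr)$ back into a Rademacher sum via Khintchine--Maurey in the other direction, I obtain
\begin{equation*}
 \E\Big\|\sum_n \eps_n\, C_{a_{j_n}}g_n\Big\|_{L^p(\R^{2d};X)}^p
 \;\lesssim_{p,q,X}\; \E\Big\|\sum_n \eps_n\, \mathcal{C}_{|b_{j_n}|}|g_n|\Big\|_{L^p(\R^{2d};X)}^p.
\end{equation*}

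Third, I would apply the $R$-boundedness hypothesis on $(\mathcal{C}_{|b_j|})_{j\in J}$ to estimate the right-hand side by $\mathscr{R}_p^p\, \E\|\sum_n \eps_n |g_n|\|_{L^p(\R^{2d};X)}^p$, and then run one more Khintchine--Maurey round-trip. The crucial observation here is that the square function of $(|g_n|)$ coincides with that of $(g_n)$ in the lattice (since $\bigl||g_n|\bigr|=|g_n|$), so this last norm is equivalent to $\E\|\sum_n \eps_n g_n\|_{L^p(\R^{2d};X)}^p$. Assembling the three steps delivers the desired $R$-boundedness of $(C_{a_j})_{j\in J}$ with a bound proportional to $\mathscr{R}_p$, the proportionality constant depending only on $p$, the cotype $q$, and the Khintchine--Maurey constant of $X$.

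The main obstacle is essentially a bookkeeping one: the Khintchine--Maurey equivalence must be invoked twice in each direction, and one must be careful that the lattice monotonicity is used on nonnegative elements (the moduli) so that the $\ell^2$-lattice expression inherits the pointwise comparison. No deeper input is needed — the content of the lemma is precisely that, on a lattice of finite cotype, the oscillatory phase in the twisted convolution becomes harmless once $R$-boundedness is rephrased as a square-function estimate.
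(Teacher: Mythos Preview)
Your proposal is correct and follows essentially the same route as the paper's proof: Khintchine--Maurey to replace Rademacher sums by lattice square functions, the pointwise domination $|C_{a_j}g|\le \mathcal{C}_{|b_j|}|g|$ to kill the twist, the $R$-boundedness hypothesis, and the observation that the square functions of $(g_n)$ and $(|g_n|)$ coincide. The only cosmetic difference is that the paper applies Khintchine--Maurey directly on $L^p(\R^{2d};X)$ (which inherits finite cotype from $X$) rather than pointwise in $(x,\xi)$ followed by integration, but this amounts to the same thing.
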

\begin{proof}
Since $L^{p}(\R^{2d};X)$ has finite cotype (see \cite[Proposition 7.1.4]{HNVW2}),
we may use the Khintchine--Maurey Theorem (see \cite[Theorem 7.2.13]{HNVW2}) to pass from Radem\-acher sums to square functions. If 
$j_1,\dots,j_N\in J$ and $g_1\dots,g_N\in L^p(\R^{2d};X)$ are given
and $(\eps_n)_{n=1}^N$ is a Rademacher sequence, we thus obtain
\begin{align*}
\ & \E \Big\n \sum_{n=1}^N \eps_n C_{{a_{j_n}}} g_{j_n} \Big\n _{L^p(\R^{2d};X)}^p
\\ & \qquad \eqsim \Big\n \Big(\sum_{n=1}^N |C_{{a_{j_n}}} g_{j_n}|^{2} \Big)^{1/2} \Big\n_{L^p(\R^{2d};X)}^p\\
& \qquad  = \int_{\R^{2d}} \Big\n \Big(\sum_{n=1}^N \Big|\int_{\R^{2d}} e^{\frac12i(x\eta - y\xi)}
{a_{j_n}}(y,\eta) g_{j_n}(x-y,\xi-\eta)\ud y\ud\eta\Big|^{2}\Big)^{1/2} \Big\n^p\ud x\ud \xi \\
& \qquad  \leq \int_{\R^{2d}} \Big\n \Big(\sum_{n=1}^N \Big(\int_{\R^{2d}} |{b_{j_n}}(y,\eta)| |g_{j_n}(x-y,\xi-\eta)|\ud y\ud\eta\Big)^{2}\Big)^{1/2} \Big\n^p\ud x\ud \xi \\
& \qquad   = \Big\n \Big(\sum_{n=1}^N (\mathcal{C}_{|b_{j_n}|}|g_{j_n}|)^{2}\Big)^{1/2} \Big\n _{L^p(\R^{2d};X)}^p
\\ & \qquad \eqsim 
 \E \Big\n \sum_{n=1}^N \eps_n \mathcal{C}_{|b_{j_n}|} |g_{j_n}| \Big\n _{L^p(\R^{2d};X)}^p
\\ & \qquad \le \mathscr{R}_p^p \E \Big\n \sum_{n=1}^N \eps_n |g_{j_n}| \Big\n _{L^p(\R^{2d};X)}^p
 \\ & \qquad \eqsim  \mathscr{R}_p^p \Big\n \Bigl(\sum_{n=1}^N  |g_{j_n}|^2\Bigr)^{1/2} \Big\n _{L^p(\R^{2d};X)}^p
 \\ & \qquad \eqsim \mathscr{R}_p^p \E \Big\n \sum_{n=1}^N \eps_n g_{j_n} \Big\n _{L^p(\R^{2d};X)}^p
\end{align*}
with constants depending only on $p$ and $X$.
\end{proof}

We now consider the kernels relevant to our applications, namely the Fourier transforms of
$$a_{z}(x,\xi) = (1+\lambda_{z})^{d} e^{-\lambda_{z}(|x|^{2}+|\xi|^{2})}$$
with $\lambda_{z} = \frac{1-e^{-z}}{1+e^{-z}}$ for $z \in \C$ such that $\Re z>0$.
We need an elementary lemma. 

\begin{lemma}\label{lem:theta} 
For all $0<\theta<\frac12\pi$ and non-zero $z\in \C$ satisfying $|\arg z|\le \theta$
 we have 
$$
(\tfrac12\pi  - \theta) \lesssim \cos(\arg{\lambda_{z}}), \quad
|\lambda_{z}| \lesssim (\tfrac12\pi  - \theta)^{-1},
$$
with constants independent of $\theta$ and $z$.
\end{lemma}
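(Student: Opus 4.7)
My plan is to exploit the explicit formula $\lambda_z = \tanh(z/2)$ written out in terms of real and imaginary parts of $z = x+iy$. The standard identity
\[
\tanh\frac{z}{2} = \frac{\sinh x + i\sin y}{\cosh x + \cos y}, \qquad z = x+iy,
\]
(valid because $\cosh(z/2)\overline{\cosh(z/2)} = \tfrac12(\cosh x + \cos y)$ and similarly for the product $\sinh(z/2)\overline{\cosh(z/2)}$) will reduce both claims to elementary real-variable inequalities. For $z \ne 0$ with $|\arg z| \le \theta < \pi/2$, we have $x > 0$ and $|y| \le x\tan\theta$, which is the only geometric input we will need.

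For the first inequality, I will compute
\[
\cos(\arg \lambda_z) = \frac{\Re \lambda_z}{|\lambda_z|} = \frac{\sinh x}{\sqrt{\sinh^2 x + \sin^2 y}},
\]
noting that $|\lambda_z|^2 = (\cosh x - \cos y)/(\cosh x + \cos y)$ has numerator $(\sinh^2 x + \sin^2 y)/(\cosh x + \cos y)^2$ after clearing. Then the elementary bounds $|\sin y| \le |y|$ and $\sinh x \ge x$ for $x \ge 0$ give $|\sin y|/\sinh x \le |y|/x \le \tan\theta$, hence $\cos(\arg \lambda_z) \ge \cos\theta$. Finally, $\cos\theta = \sin(\tfrac12\pi - \theta) \ge \tfrac{2}{\pi}(\tfrac12\pi - \theta)$ by concavity of $\sin$ on $[0,\tfrac12\pi]$. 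This part is essentially a one-line argument after the formula is in place.

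For the modulus bound, I split according to the sign of $\cos y$. If $\cos y \ge 0$, then $|\lambda_z|^2 = (\cosh x - \cos y)/(\cosh x + \cos y) \le 1$ trivially. If $\cos y < 0$, write $v := -\cos y \in (0,1]$ so that
\[
|\lambda_z|^2 = 1 + \frac{2v}{\cosh x - v} \le 1 + \frac{2}{\cosh x - 1} \le 1 + \frac{4}{x^2},
\]
using $\cosh x - 1 \ge x^2/2$. The case $\cos y < 0$ forces $|y| \ge \pi/2$, so the sector constraint gives $x \ge (\pi/2)\cot\theta$, and substituting yields $|\lambda_z|^2 \le 1 + (16/\pi^2)\tan^2\theta$. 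The inequality $\tan\theta = \cot(\tfrac12\pi - \theta) \le 1/(\tfrac12\pi - \theta)$ (which follows from $t\cos t < \sin t$ on $(0,\tfrac12\pi)$) then produces the claimed bound, after absorbing the constant term into $(\tfrac12\pi - \theta)^{-2}$ using $\tfrac12\pi - \theta \le \tfrac12\pi$.

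No step is really an obstacle, but the one that requires the most care is the modulus bound near the singularity at $z = i\pi$ of $\tanh(z/2)$: the estimate must capture the correct blow-up rate $1/\mathrm{dist}(z, i\pi)$ as $\theta \uparrow \tfrac12\pi$. The case split on $\cos y$ is what makes this clean, since the region $\cos y < 0$ is precisely where the sector starts to see the pole, and in that region the sector constraint provides a quantitative lower bound on $x$ that matches the required rate.
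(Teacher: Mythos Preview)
Your proof is correct and follows essentially the same strategy as the paper's: compute the real and imaginary parts of $\lambda_z$ explicitly and reduce both inequalities to elementary real-variable estimates, then convert $\cos\theta$ to $\tfrac12\pi-\theta$ at the end. Your use of the identity $\tanh(z/2) = (\sinh x + i\sin y)/(\cosh x + \cos y)$ and the case split on the sign of $\cos y$ makes the argument cleaner than the paper's sketch, and in fact yields the sharper bound $|\arg\lambda_z|\le\theta$ (equivalently $\cos(\arg\lambda_z)\ge\cos\theta$) for the first inequality, whereas the paper only records $\tan(\arg\lambda_z)<1/\cos\theta$.
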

\begin{proof}
It suffices to prove the inequalities for non-zero $z\in \C$ satisfying $|\arg z| = \theta$.
Writing $z = r(\cos\theta+i\sin\theta)$ and computing the real and imaginary parts
of $\frac{1-e^{-z}}{1+e^{-z}}$ in terms of $r$ and $\theta$, one readily finds
that if $|\arg z| < \theta$, then 
$$ \tan (\arg(\frac{1-e^{-z}}{1+e^{-z}})) < \frac1{\cos\theta} $$
and consequently
$$ \frac1{\cos(\arg(\frac{1-e^{-z}}{1+e^{-z}})) }< (1+\frac1{\cos^2\theta})^{1/2}
\le 1+ \frac1{\cos\theta}\lesssim \frac1{\tfrac12\pi  - \theta}.$$
Similar elementary estimates show that if $|\arg z| < \theta$, then 
$$ |\lambda_z| = \Big|\frac{1-e^{-z}}{1+e^{-z}}\Big| \lesssim \frac1{\cos\theta}
\lesssim \frac1{(\tfrac12\pi  - \theta)}.$$
\end{proof}

This lemma is used to prove the following $R$-boundedness result.

\begin{proposition}
\label{prop:convR}
Let $X$ be a  UMD  Banach lattice, and let $1<p<\infty$.
There exists a constant $M\ge 0$,  depending only on $p$ and $X$, such that for all
$\theta \in (0,\frac12\pi )$
the family $\{C_{\wh a_z } \;;\; z\not=0,\, |\arg z| < \theta\}$
is $R$-bounded in $\calL(L^p(\R^{2d};X))$, with constant 
$$\mathscr{R}(\{C_{\wh a_z } \;;\; z\not=0, \,|\arg z| < \theta\}) \le \frac{M}{(\frac{1}{2}\pi-\theta)^{2d}} .$$
\end{proposition}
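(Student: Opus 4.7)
The plan is to compute the Fourier transform $\wh a_z$ explicitly, dominate its absolute value by a real isotropic Gaussian, and then invoke Lemma \ref{lem:untwist-Rbd} to pass from twisted convolutions to ordinary convolutions on $L^p(\R^{2d};X)$. Since $\lambda_z$ has strictly positive real part by Lemma \ref{lem:theta}, analytic continuation of the standard Gaussian Fourier transform formula gives
\[
 \wh a_z(u,v) = \Big(\frac{1+\lambda_z}{2\lambda_z}\Big)^d \exp\Big(-\frac{|u|^2+|v|^2}{4\lambda_z}\Big).
\]
Setting $\sigma_z := \Re(1/(4\lambda_z)) = \cos(\arg\lambda_z)/(4|\lambda_z|)>0$, this yields the pointwise majorization
$|\wh a_z(u,v)| \le b_z(u,v) := |(1+\lambda_z)/(2\lambda_z)|^d \, e^{-\sigma_z(|u|^2+|v|^2)}$.

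An elementary Gaussian integral then gives $\|b_z\|_1 = \bigl(2\pi|1+\lambda_z|/\cos(\arg\lambda_z)\bigr)^d$, which by Lemma \ref{lem:theta} is $\lesssim (\tfrac12\pi-\theta)^{-2d}$, uniformly for $z$ in the sector $|\arg z| < \theta$. Writing $b_z = \|b_z\|_1 \tilde b_z$ with $\tilde b_z$ the associated $L^1$-normalized Gaussian, the family $(\tilde b_z)$ consists of isotropic dilations of a single fixed Gaussian on $\R^{2d}$. By Lemma \ref{lem:untwist-Rbd}, the $R$-boundedness of $\{C_{\wh a_z}\}$ reduces to that of the ordinary convolution family $\{\mathcal{C}_{b_z}\}$, and by the Kahane contraction principle together with the uniform bound on $\|b_z\|_1$, this in turn reduces to proving $R$-boundedness of $\{\mathcal{C}_{\tilde b_z}\}$ with constant depending only on $p$ and $X$.

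The latter follows from the standard pointwise domination $|\tilde b_z * f| \lesssim Mf$ of Gaussian averages by the (lattice-valued) Hardy--Littlewood maximal function, combined with a Fefferman--Stein style vector-valued maximal inequality: since $X$ is a UMD Banach lattice, so is $\ell^2(X)$, hence the maximal operator is bounded on $L^p(\R^{2d}; \ell^2(X))$. Through the Khintchine--Maurey equivalence between Rademacher sums and square functions in lattices of finite cotype, this translates into the required $R$-bound. The main obstacle is essentially bookkeeping: carefully tracking the dependence of $\|b_z\|_1$ on $\lambda_z$ via Lemma \ref{lem:theta} so that it produces exactly the stated power $(\tfrac12\pi-\theta)^{-2d}$. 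Once this is done and Lemma \ref{lem:untwist-Rbd} is applied, the remaining step is a standard application of the UMD-lattice maximal theorem.
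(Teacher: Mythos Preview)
Your argument is correct and follows the paper's approach almost verbatim: compute $\wh a_z$, dominate it pointwise by a real Gaussian using Lemma~\ref{lem:theta}, then invoke Lemma~\ref{lem:untwist-Rbd} to reduce to $R$-boundedness of ordinary Gaussian convolutions on $L^p(\R^{2d};X)$. The only divergence is in the final step. The paper observes that the dominating Gaussians are (up to a constant) the heat kernels $e^{t\Delta}$ and deduces their $R$-boundedness from the bounded $H^\infty$-calculus of $-\Delta\otimes I_X$ on $L^p(\R^{2d};X)$. You instead normalise in $L^1$, use pointwise domination by the lattice Hardy--Littlewood maximal function, and appeal to the Fefferman--Stein inequality for UMD lattices. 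Both routes are standard and yield constants depending only on $p$ and $X$; the paper's is more in keeping with its operator-theoretic framework, while yours avoids the $H^\infty$-machinery at the cost of invoking the lattice maximal theorem (which, incidentally, is one of the ingredients behind the $H^\infty$-calculus result anyway).
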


\begin{proof}
The space $X$, being UMD, has finite cotype (see \cite[Proposition 7.3.15]{HNVW2}).
Fix $\theta \in (0,\frac12\pi )$ and let $z\in \C$ be a non-zero element such that 
$|\arg z|< \theta$ for all $k=1, \dots , N$.
Writing $t = 1/\Re(1/\lambda_{z}) = |\lambda_{z}|/\cos(\arg{\lambda_{z}})$,
for all $y,\eta \in \R^{d}$ we have
\begin{align*}
|\wh a_z (y,\eta)| &\eqsim |1+\lambda_{z}|^{d} 
|\lambda_{z}|^{-d}
e^{-\cos(\arg{\lambda_{z}})(|y|^{2}+|\eta|^{2})/4|\lambda_{z}|} \\
& \lesssim 
|1+\lambda_{z}|^{d} (\cos(\arg{\lambda_{z}}))^{-d} 
t^{-d} e^{-(|y|^{2}+|\eta|^{2})/4t}
\end{align*}
with constants depending only on $p$ and $X$.
Hence by Lemma \ref{lem:theta},
\begin{align*}
(\tfrac12\pi -\theta)^{2d}|\wh a_z (y,\eta)|  \lesssim 
t^{-d} e^{-\frac{(|y|^{2}+|\eta|^{2})}{4t}}=:b_{t}(y,\eta),
\end{align*}
with constants depending only on $p$ and $X$.
Hence, by Lemma \ref{lem:untwist-Rbd}, 
$$
\mathcal{R}(\{(\tfrac12\pi -\theta)^{2d} C_{\wh a_z } \;;\; z\not=0,\, |\arg z| < \theta\})
\lesssim 
\mathcal{R}(\{\mathcal{C}_{b_{t}} \;;\; t>0\}),
$$
with a constant depending only on $p$ and $X$.
Noting that $\mathcal{C}_{b_{t}}$ is a constant multiple of  $\exp(t\Delta \otimes I_{X})$, the
$R$-boundedness of the family $\{\mathcal{C}_{b_{t}} \;;\; t>0\}$ follows from
the fact that $-\Delta \otimes I_{X}$ has a bounded $H^{\infty}$-calculus
on $L^p(\R^{2d};X)$ \cite[Theorem 10.2.25]{HNVW2}. 
\end{proof}

\begin{proof}[Proof of Theorem \ref{thm:L-R-sect}] 
Let $(P(t))_{t\ge 0}$ be the analytic $C_0$-semigroup generated by $-L 
= \frac12d-\frac12(A^2+B^2)$. Fix $\theta \in (0, \frac12{\pi})$.
By  \cite[Proposition 10.3.3]{HNVW2} it suffices to show that the set
$$
V_\theta :=\bigl\{P(z):\,z\not=0,\,|\arg z| < \theta\bigr\}
$$
is $R$-bounded with
$$\mathscr{R}(V_\theta)\lesssim_{p,X} \frac{1}{(\frac12{\pi}-\theta)^{2d}}.$$
By Theorem \ref{thm:A2B2} and Proposition \ref{prop:transf}, for this it suffices to show that the set
$$
V'_\theta :=\bigl\{C_{\wh a_z }:\,z\not=0,\,|\arg z| < \theta\bigr\}
$$
is $R$-bounded with
$$\mathscr{R}(V'_\theta)\lesssim_{p,X} \frac{1}{(\frac12{\pi}-\theta)^{2d}}.$$
This has been done in Proposition \ref{prop:convR}.
\end{proof}

\section{Functional calculus of $L$}\label{sec:Hinfty}

In this section we prove that boundedness of the Weyl calculus of a Weyl pair $(A,B)$ implies a spectral multiplier theorem for the operator
$L= \frac12(A^2+B^2)-\frac12d$, acting on a UMD lattice $X$. This is done by applying the theory developed in \cite{KalWei} to obtain a holomorphic functional calculus of angle zero from square function estimates and appropriate $R$-sectoriality bounds. The precise form of the latter then allows us to apply the theory developed in \cite{KriegW} to extend this holomorphic functional calculus to a full H\"ormander type spectral multiplier theorem.

\begin{theorem}[Bounded $H^\infty$-calculus]\label{thm:HinftyL}
 If $(A,B)$ is a Weyl pair of dimension $d$ on a UMD lattice $X$ with a bounded Weyl calculus of type $0$, then
 $L:=\frac12(A^2+B^2)-\frac12d$ has a bounded 
 $H^\infty(\Sigma_\theta)$-calculus for all $\theta\in (0,\pi)$.
 \end{theorem}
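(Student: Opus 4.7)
The plan is to combine Theorem \ref{thm:L-R-sect} with the Kalton--Weis identity \eqref{eq:KW-Hinfty}. Theorem \ref{thm:L-R-sect} gives $R$-sectoriality of $L$ of every angle $\theta\in(0,\pi)$, so $\omega_R(L)=0$. The Kalton--Weis theorem asserts that on the UMD space $X$, once $L$ admits a bounded $H^\infty(\Sigma_\sigma)$-calculus for \emph{some} $\sigma\in(\omega(L),\pi)$, then $\omega_{H^\infty}(L)=\omega_R(L)$. Hence it suffices to establish boundedness of the $H^\infty(\Sigma_\sigma)$-calculus for some $\sigma\in(0,\pi)$; this will immediately force $\omega_{H^\infty}(L)=0$ and yield the claim.

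To obtain a bounded $H^\infty(\Sigma_\sigma)$-calculus (for some $\sigma$, say close to $\pi$), we would exploit the bounded Weyl calculus of type $0$ together with the explicit formula $P(t)=a_t(A,B)$ from Theorem \ref{thm:A2B2}. For $\phi\in H^1(\Sigma_\sigma)\cap H^\infty(\Sigma_\sigma)$, the Dunford representation combined with $(z+L)^{-1}f=\int_0^\infty e^{-zt}P(t)f\,dt$ and Fubini gives $\phi(L)=b_\phi(A,B)$, where $b_\phi$ is an explicit symbol obtained by integrating $\phi$ against the Gaussian family $\{a_t\}_{t>0}$ along a suitable contour. Theorem \ref{thm:Weyl-S0} then bounds $\|\phi(L)\|$ by $\|b_\phi\|_{S^0}$, reducing the task to the symbol estimate $\|b_\phi\|_{S^0}\lesssim \|\phi\|_{H^\infty(\Sigma_\sigma)}$; this can be checked by exploiting the Gaussian form of $a_t$, where each $\xi$-derivative of $a_t$ carries a factor of $\lambda_t$ that, when integrated against the bounded $\phi$, produces the $\langle\xi\rangle^{-|\alpha|}$ decay required by Definition \ref{def:S-N}.

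The main obstacle is this symbol estimate, keeping track of the $\langle\xi\rangle$-weightings and contour deformations cleanly. A potentially cleaner alternative on a UMD lattice is to bypass the symbol calculus and instead verify a McIntosh-type quadratic estimate, such as $\bigl\|\bigl(\int_0^\infty |tLe^{-tL}f|^2\,dt/t\bigr)^{1/2}\bigr\|_X\lesssim \|f\|_X$ interpreted in the lattice sense, by transferring it via Proposition \ref{prop:transf} and Lemma \ref{lem:untwist-Rbd} to the analogous estimate for $-\Delta$ on $L^p(\R^{2d};X)$, which holds by the classical bounded $H^\infty$-calculus of the Laplacian on UMD spaces. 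Either way, once some bounded $H^\infty$-calculus is secured, the $R$-sectoriality of Theorem \ref{thm:L-R-sect} and the Kalton--Weis identity deliver the bounded $H^\infty(\Sigma_\theta)$-calculus of $L$ for every $\theta\in(0,\pi)$.
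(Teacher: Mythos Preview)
Your high-level strategy --- use Theorem \ref{thm:L-R-sect} to obtain $\omega_R(L)=0$, establish a bounded $H^\infty$-calculus at \emph{some} angle, then invoke Kalton--Weis to push the angle down --- is sound and is essentially the framework the paper uses (via \cite[Theorem 10.4.9]{HNVW2}, which packages $R$-sectoriality together with a discrete square function estimate). The gap is that neither of your two routes to the initial $H^\infty$-bound actually goes through as stated.

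\textbf{Approach 1} (direct symbol estimate for $b_\phi$): the claimed bound $\|b_\phi\|_{S^0}\lesssim\|\phi\|_{H^\infty(\Sigma_\sigma)}$ is precisely the heart of the matter, and your sketch does not establish it. By Lemma \ref{lem:pol}, $\langle\xi\rangle^{|\alpha|}|\partial_\xi^\alpha a_t(x,\xi)|$ is bounded uniformly in $t$ (not decaying in $t$), so integrating this against a merely bounded $\phi$ along a contour gives a divergent bound; the required $\langle\xi\rangle^{-|\alpha|}$ decay comes only from cancellation in $\phi$, which you have not exploited.

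\textbf{Approach 2} (transfer the lattice square function to $-\Delta$): Lemma \ref{lem:untwist-Rbd} untwists $C_{\widehat a_j}$ only through pointwise domination $|\widehat a_j|\le |b_j|$, landing on the ordinary convolutions $\mathcal C_{|b_j|}$. For $tLe^{-tL}$ the relevant $|b_t|$ dominates $|t\partial_t\widehat a_t|$, which is essentially the absolute value of the kernel of $t\Delta e^{t\Delta}$; since $\int|b_t|\sim 1$ uniformly in $t$, the operators $\mathcal C_{|b_t|}$ form an approximate identity and the square function $\int_0^\infty|\mathcal C_{|b_t|}f|^2\,dt/t$ diverges. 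The cancellation needed for the Laplacian square function is destroyed by the absolute value, so the transference does not reduce to the known estimate for $-\Delta$.

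The paper's proof resolves this by working with the \emph{discrete} square function $\sum_j\varepsilon_j(e^{-2^{j+1}sL}-e^{-2^jsL})f$ and using the type-$0$ Weyl calculus hypothesis \emph{directly} rather than via transference: one shows that, for each choice of signs $\epsilon$, the symbol $\sum_{j=1}^N\epsilon_j\bigl(e^{-\lambda_{2^{-j+1}s}(|x|^2+|\xi|^2)}-e^{-\lambda_{2^{-j}s}(|x|^2+|\xi|^2)}\bigr)$ lies in $S^0$ uniformly. The sup-norm bound comes from the crucial observation that the differences all have the same sign (monotonicity of $\lambda_t$), so the sum telescopes; the derivative bounds come from the summability $\sum_j\lambda_{2^{-j}s}^{1/2}<\infty$ (Lemma \ref{lem:exp-symb}). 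The large-$j$ part is handled separately by the exponential decay in Corollary \ref{cor:spectral}. This telescoping structure is the missing technical idea in your proposal.
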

 
For the proof of the theorem we need two lemmas. The first provides an expression for
 derivatives of the exponentials in the Weyl calculus representation formula of the operators $e^{-tL}$ of Theorem \ref{thm:A2B2}.
 
 \begin{lemma}\label{lem:pol} 
 For all multi-indices $\alpha,\beta\in\N^d$ there is a polynomial 
 $p_{\alpha,\beta}$ of degree $(\alpha,\beta)$ in the variables $(x,\xi)\in \R^{2d}$ such that 
 for all $\lambda>0$ we have
 $$\partial_{\xi} ^{\alpha} \partial_{x} ^{\beta}e^{- \lambda(| x|^2+|\xi|^2)}= 
\sqrt{\lambda}^{|\alpha|+|\beta|} p_{\alpha,\beta}(\sqrt{\lambda} x,\sqrt{\lambda}\xi)e^{-\lambda(|x|^2+|\xi|^2)}.$$
 \end{lemma}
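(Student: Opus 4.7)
The plan is to reduce the statement to a one-variable computation by exploiting the tensor-product structure of the Gaussian. Since
\[
e^{-\lambda(|x|^2+|\xi|^2)} = \prod_{j=1}^d e^{-\lambda x_j^2}\prod_{j=1}^d e^{-\lambda\xi_j^2},
\]
and the operator $\partial_\xi^\alpha\partial_x^\beta = \prod_j \partial_{\xi_j}^{\alpha_j}\prod_j \partial_{x_j}^{\beta_j}$ is a tensor product of one-variable derivatives, it suffices to prove the claim for $d=1$ and a single variable, then take the product.

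For the one-variable case, I would perform the substitution $y = \sqrt{\lambda}\,x$ to write
\[
\partial_x^k e^{-\lambda x^2} = \sqrt{\lambda}^{\,k}\,\bigl(\partial_y^k e^{-y^2}\bigr)\big|_{y=\sqrt{\lambda}x}.
\]
A straightforward induction on $k$ shows $\partial_y^k e^{-y^2} = q_k(y)\, e^{-y^2}$ for some polynomial $q_k$ of degree exactly $k$ (the recursion being $q_{k+1}(y) = q_k'(y) - 2y\,q_k(y)$, starting from $q_0 \equiv 1$; up to sign this is the physicists' Hermite polynomial). Substituting back gives
\[
\partial_x^k e^{-\lambda x^2} = \sqrt{\lambda}^{\,k}\, q_k(\sqrt{\lambda}\,x)\, e^{-\lambda x^2},
\]
which is the one-variable version of the assertion.

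To conclude, I would multiply these $2d$ one-variable identities together. Setting
\[
p_{\alpha,\beta}(x,\xi) := \prod_{j=1}^d q_{\beta_j}(x_j)\prod_{j=1}^d q_{\alpha_j}(\xi_j),
\]
a polynomial of total degree $|\alpha|+|\beta|$ in $(x,\xi)$, we obtain the claimed formula with the power $\sqrt{\lambda}^{\,|\alpha|+|\beta|}$ coming out as the product of the individual factors. No real obstacle arises; the only mild bookkeeping is keeping track that the degree is preserved at each step of the Hermite-type recursion, which is immediate from the leading-term computation $q_{k+1}(y) = -2y\,q_k(y) + \text{lower order}$.
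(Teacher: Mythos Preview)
Your proof is correct and essentially the same as the paper's: both rest on the Hermite-type recursion $q_{k+1}(y)=q_k'(y)-2y\,q_k(y)$ obtained by differentiating once more. The only organisational difference is that you first exploit the separability $e^{-\lambda(|x|^2+|\xi|^2)}=\prod_j e^{-\lambda x_j^2}\prod_j e^{-\lambda\xi_j^2}$ to reduce to one variable and then tensorise, whereas the paper carries out the induction directly in $\R^{2d}$, differentiating one coordinate at a time; your product formula $p_{\alpha,\beta}(x,\xi)=\prod_j q_{\beta_j}(x_j)\prod_j q_{\alpha_j}(\xi_j)$ in fact gives the refined multi-degree $(\beta,\alpha)$ (degree $\beta_j$ in $x_j$, degree $\alpha_j$ in $\xi_j$) that the paper's inductive step tracks, not just the total degree $|\alpha|+|\beta|$ you mention.
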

\begin{proof}
If $p$ is a polynomial in $2d$ variables $x=(x_1,\dots,x_d)$ and $\xi = (\xi_1,\dots,\xi_d)$,
of degree $\gamma=(\gamma_1,\dots,\gamma_d)$ in $x$ and $\delta=(\delta_1,\dots,\delta_d)$ in $\xi$, then for any $\lambda>0$, 
\begin{align*} 
 & 
\partial_{x_j} [p( \sqrt{\lambda} x, \sqrt{\lambda} \xi)e^{- \lambda(| x|^2+|\xi|^2)}]
\\ & \qquad \qquad = [ \sqrt{\lambda} (\partial_{x_j}p)( \sqrt{\lambda} x, \sqrt{\lambda} \xi) -2 \lambda x_j p(\sqrt{\lambda} x, \sqrt{\lambda} \xi)]e^{-\lambda (|x|^2+|\xi|^2)}
\\ &  \qquad \qquad  = \sqrt{\lambda} q(\sqrt{\lambda} x, \sqrt{\lambda} \xi)e^{-\lambda (|x|^2+|\xi|^2)},
\end{align*}
where $q$ is polynomial of degree $(\gamma_1,\dots,\gamma_{j-1},\gamma_j+1,\gamma_{j+1},\dots,\gamma_d)$ in $x$ and of degree $\delta$ in $\xi$.
A similar identity holds for the partial derivatives with respect to $\xi_j$, which add one to the degree in the variable $\xi_j$.  
The lemma now follows by induction on $\alpha$ and $\beta$.
\end{proof}

As an application of the preceding lemma, the next lemma provides a uniform bound on the derivatives of certain signed sums of exponentials which will be used later to prove that certain related sums belong to the symbol class $S^{0}$ uniformly.

As before, let $\lambda_{t} = \frac{1-e^{-t}}{1+e^{-t}}$ for $t>0$.

\begin{lemma}\label{lem:exp-symb}
 For all multi-indices $\alpha,\beta\in\N^d$ such that $|\alpha|+|\beta|\not=0$
 the functions
 $$ \kappa_{k,\epsilon,s}(x,\xi) :=\sum_{j=1}^k \epsilon_j \exp(-\lambda_{2^{-j} s}(|x|^2+|\xi|^2))$$
 satisfy
 $$\sup_{(x,\xi)\in \R^{2d}}\, \lb\xi\rb^{|\alpha|}|\partial_\xi^{\alpha}\partial_x^\beta \kappa_{k,\epsilon,s}(x,\xi)| < \infty
 $$ 
 uniformly with respect to 
 $k\ge 1$, $\epsilon = (\epsilon_j)_{j=1}^k\in \{\pm 1\}^k$, and $s\in [1,2]$.
\end{lemma}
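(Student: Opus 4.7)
The plan is to apply Lemma~\ref{lem:pol} termwise and bound the resulting series in $j$ uniformly. Set $\mu_j:=\lambda_{2^{-j}s}$; since $\lambda_t\asymp t$ on $(0,1]$, we have $\mu_j\asymp 2^{-j}$ uniformly in $s\in[1,2]$ and $j\ge 1$. Lemma~\ref{lem:pol} gives
\[
\partial_\xi^\alpha\partial_x^\beta e^{-\mu_j(|x|^2+|\xi|^2)}
=\mu_j^{N/2}\,p_{\alpha,\beta}(\sqrt{\mu_j}\,x,\sqrt{\mu_j}\,\xi)\,e^{-\mu_j(|x|^2+|\xi|^2)},
\]
with $N:=|\alpha|+|\beta|\ge 1$. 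Since $p_{\alpha,\beta}$ has total degree at most $N$, the standard estimate $(1+|u|+|v|)^N e^{-(|u|^2+|v|^2)/2}\lesssim_N 1$ absorbs the polynomial into half of the Gaussian and yields
\[
|\partial_\xi^\alpha\partial_x^\beta e^{-\mu_j(|x|^2+|\xi|^2)}|\lesssim_{\alpha,\beta}\; \mu_j^{N/2}\,e^{-\mu_j|\xi|^2/2}.
\]

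It therefore suffices to prove $\sum_{j\ge 1}\langle\xi\rangle^{|\alpha|}\mu_j^{N/2}e^{-\mu_j|\xi|^2/2}\lesssim 1$ uniformly in $\xi\in\R^d$ and $s\in[1,2]$. The key elementary inequality is
\[
\langle\xi\rangle^{|\alpha|}\mu^{|\alpha|/2}e^{-\mu|\xi|^2/4}\lesssim_\alpha 1 \qquad (\mu\in(0,1],\ \xi\in\R^d),
\]
proved by splitting into $|\xi|\le 1$ (where $\langle\xi\rangle$ is bounded and $\mu^{|\alpha|/2}\le 1$) and $|\xi|\ge 1$ (where $\langle\xi\rangle\lesssim|\xi|$, and the substitution $u=\sqrt{\mu}\,|\xi|$ reduces matters to the boundedness of $u^{|\alpha|}e^{-u^2/4}$). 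This gives the per-term estimate $\langle\xi\rangle^{|\alpha|}\mu_j^{N/2}e^{-\mu_j|\xi|^2/2}\lesssim \mu_j^{|\beta|/2}e^{-\mu_j|\xi|^2/4}$. When $|\beta|\ge 1$, this is dominated by the summable geometric sequence $\mu_j^{|\beta|/2}\lesssim 2^{-j|\beta|/2}$, and the claim follows immediately.

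The main obstacle is the pure-$\xi$ case $|\beta|=0$ (so $|\alpha|\ge 1$), because then $\sum_j e^{-\mu_j|\xi|^2/4}$ diverges for fixed $\xi$ since $\mu_j\to 0$. In this case I retain the finer bound $\langle\xi\rangle^{|\alpha|}\mu_j^{|\alpha|/2}e^{-\mu_j|\xi|^2/2}\lesssim w_j^{|\alpha|}e^{-w_j^2/4}$, where $w_j:=\sqrt{\mu_j}\,\langle\xi\rangle\asymp 2^{-j/2}\langle\xi\rangle$ (using $|\xi|^2\ge \langle\xi\rangle^2/2$ for $|\xi|\ge 1$), and I exploit the geometric distribution of the sample points $w_j$. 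For $|\xi|\le 1$ the sum collapses to $\sum_j \mu_j^{|\alpha|/2}<\infty$. For $|\xi|\ge 1$ I split at the threshold $j^\ast\asymp 2\log_2\langle\xi\rangle$, at which $w_{j^\ast}\asymp 1$: for $j\le j^\ast$ the values $w_j^2\ge 1$ double as $j$ decreases, so writing $m=j^\ast-j$ gives $\sum_{m\ge 0}2^{m|\alpha|/2}e^{-2^m/4}<\infty$; for $j>j^\ast$ the tail $\sum_{j>j^\ast} w_j^{|\alpha|}\lesssim \langle\xi\rangle^{|\alpha|}\cdot 2^{-j^\ast|\alpha|/2}\asymp 1$ by geometric summation, using crucially that $|\alpha|\ge 1$. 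Combining these cases yields the required uniform bound.
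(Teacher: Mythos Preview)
Your proof is correct and follows essentially the same strategy as the paper: apply Lemma~\ref{lem:pol} termwise, use $\mu_j\asymp 2^{-j}$, and split into $|\xi|\le 1$ and $|\xi|>1$. The only real difference is that for $|\xi|>1$ the paper bounds $|\xi|^{|\alpha|}\le |\xi|^{|\alpha|+|\beta|}$ to reduce everything to a single dyadic sum $\sum_j (\sqrt{\mu_j}|\xi|)^{N}\,q(\sqrt{\mu_j}x,\sqrt{\mu_j}\xi)e^{-\mu_j(|x|^2+|\xi|^2)}$ and then cites \cite[Proposition~H.2.3]{HNVW2}, whereas you treat $|\beta|\ge 1$ by the geometric shortcut $\sum_j \mu_j^{|\beta|/2}<\infty$ and, for $|\beta|=0$, spell out the threshold-splitting argument at $j^\ast\asymp 2\log_2\langle\xi\rangle$ that the cited proposition encapsulates; your version is thus a bit more self-contained but otherwise the same.
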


\begin{proof}
Let us set $\mu_{j,s} = \lambda_{2^{-j}s}$ for brevity.
Given any two multi-indices $\alpha,\beta \in  \N^{d}$ 
such that $|\alpha|+|\beta|\not=0$ we may estimate, using Lemma \ref{lem:pol},
\begin{align*}
\ & \lb\xi\rb^{|\alpha|}|\partial_\xi^{\alpha}\partial_x^\beta \kappa_{k,\epsilon,s}(x,\xi)| 
\\ & \qquad \le
\lb\xi\rb^{|\alpha|}
\sum_{j=1}^k 
\sqrt{\mu_{j,s}}^{|\alpha|+|\beta|} 
|p_{\alpha,\beta}(\sqrt{\mu_{j,s}}x,\sqrt{\mu_{j,s}}\xi)|
\exp(-\mu_{j,s}(|x|^2+|\xi|^{2})).
\intertext{For $|\xi|\le 1$ we estimate the right-hand side by}
 & \qquad  \lesssim_\alpha
\sum_{j=1}^k 
\sqrt{\mu_{j,s}}^{|\alpha|+|\beta|} 
|p_{\alpha,\beta}(\sqrt{\mu_{j,s}}x,\sqrt{\mu_{j,s}}\xi)|
\exp(-\mu_{j,s}(|x|^2+|\xi|^{2}))
\\ & \qquad \lesssim_{\alpha,\beta}
\sum_{j=1} ^{k}\sqrt{\mu_{j,s}}^{|\alpha|+|\beta|} 
\\ & \qquad \lesssim_{\alpha,\beta} \sum_{j=1} ^{k} 2^{-\frac12j(|\alpha|+|\beta|)}
\\ & \qquad \lesssim_{\alpha,\beta} 1
\intertext{where we used that $\sup_{(x',\xi')\in \R^{2d}}|p_{\alpha,\beta}(x',\xi')|\exp(-(|x'|^2+|\xi'|^{2}))<\infty$ and $\mu_{j,s} \lesssim 2^{-j}$; while 
for $|\xi|>1$ we may estimate it by
          }
& \qquad \lesssim_{\alpha}
\sum_{j=1}^k (\sqrt{\mu_{j,s}}|\xi|)^{|\alpha|+|\beta|}
p_{\alpha,\beta}(\sqrt{\mu_{j,s}}x,\sqrt{\mu_{j,s}}\xi)|\exp(-\mu_{j,s}(|x|^2+|\xi|^{2}))
\\ & \qquad \lesssim_{\alpha,\beta} 1,
\end{align*}
where the last step follows by an application of \cite[Proposition H.2.3]{HNVW2}.
In all these estimates, 
the  constants are uniform in $k$, $\epsilon$, and $s$.
\end{proof}

 \begin{proof}[Proof of Theorem \ref{thm:HinftyL}]
By Theorem \ref{thm:L-R-sect} $L$ is $R$-sectorial of angle $\theta$ for any $\theta\in (0,\pi)$. Hence by
\cite[Theorem 10.4.9]{HNVW2} it suffices to show that
$$
\|f\|^2 \sim\underset{s \in [1,2]}{\sup}\sup_{N\in \N}\E \Big\|\sum  _{|j|\le N} \eps_{j}(\exp(-2^{j+1}sL) - \exp(-2^{j}sL))f\Big\|^2 \quad \forall f \in \Dom(L)\cap \Ran(L),
$$
where $(\eps_j)_{j\in\Z}$ is a Rademacher sequence,
noting that the function $z\mapsto \exp(-2z) - \exp(-z)$ belongs to $H^1(\Sigma_\theta)\cap
H^\infty(\Sigma_\theta)$ for each $\theta\in (0,\frac12\pi)$ 
using the notation of \cite[Chapter 10]{HNVW2}.
It actually suffices to prove the one-sided inequality
\begin{align}\label{eq:onesided}
\underset{s \in [1,2]}{\sup}\sup_{N\in\N}\E \Big\|\sum  _{|j|\le N} \eps_{j}(\exp(-2^{j+1}sL) - \exp(-2^{j}sL))f\Big\|^2
\lesssim \|f\|^2 \quad \forall f \in X,
\end{align}
since the reverse inequality (for $f\in \overline{\Ran(L)} = \overline{\Dom(L)\cap \Ran(L)}$) will then follow by duality as in the proof of \cite[Theorem 10.4.4(3)]{HNVW2}, noting that the pair of adjoint operators $(B^*,A^*)$ is a Weyl pair in the dual lattice $X^{*}$ (which is UMD by \cite[Proposition 4.2.17]{HNVW1} and \cite[Proposition 7.5.15]{HNVW2}).

Referring to the direct sum decomposition $X = \Ker(L) \oplus\overline{\Ran(L)}$ provided by Proposition \ref{prop:nullrange}, 
we will prove \eqref{eq:onesided} separately for $f\in \Ker(L)$ and $f\in\overline{\Ran(L)}$. 
For $f \in \Ker(L)$, \eqref{eq:onesided} is immediate from the fact that $\exp(-tL)f=f$. 
For $f\in \overline{\Ran(L)}$ we consider indices
$j\in \N$ and $j\in \Z\setminus\N$ separately. For $j\in \N$ and $f\in \Ran(L)$, say $f = Lg$, 
we use that $\exp(-tL)f = L\exp(-tL)g$ decays to $0$ exponentially as $t\to\infty$
by Corollary \ref{cor:spectral}. In combination with the triangle inequality and the contraction principle, this gives
\begin{align*}
\ & \underset{s \in [1,2]}{\sup}\sup_{N\in\N}\Bigl(\E \Big\| \sum_{j=0}^N  
\eps_{j}(\exp(-2^{j+1}sL) - \exp(-2^{j}sL))f\Big\|^2\Bigr)^{1/2} \\
& \qquad \le  
2\underset{s \in [1,2]}{\sup}\sup_{N\in\N}\Bigl(\E \Big\| \sum_{j=0}^N \eps_{j}\exp(-2^{j}sL)f\Big\|^2\Bigr)^{1/2}
\\ & \qquad \le 2
\underset{s \in [1,2]}{\sup}\sup_{N\in\N}\sum_{j=0}^{N} \|\exp(-2^{j}sL)f\|
\\ & \qquad \lesssim\|f\|.
\end{align*}
By continuity, this estimate extends to $f\in \overline{\Ran(L)}$.

Let $$\widetilde a_{t}(x,\xi) := a_{2t}(x,\xi)-a_{t}(x,\xi),$$ 
where as always $a_t(x,\xi) = (1+\lambda_{t})^{d}e^{-\lambda_{t}(|x|^{2}+|\xi|^{2})}
$ with $\lambda_t:= \frac{1-e^{-t}}{1+e^{-t}}$. In view of the cases already dealt with,  
the proof will be complete once we have shown that, for all
$f\in \overline{\Ran(L)}$,
\begin{equation}
\label{eq:sfe}
\sup_{s\in [1,2]}\sup_{N\ge 1}\mathbb{E}\Big\|\sum  _{j=1} ^{N} \eps_{j} \widetilde a_{2^{-j}s}(A,B)f\Big\|^2\lesssim \n f\n^2 \end{equation}
with a constant independent of $f$.
Set 
\begin{align*}\wt b_{t} :& = (1+\lambda_{2t})^{-d}a_{2t} - (1+\lambda_{t})^{-d}a_{t} 
\end{align*}
so that 
\begin{align*}
\wt a_t = \wt b_{t}
- ((1+\lambda_{2t})^{-d}-1)a_{2t} +  ((1+\lambda_{t})^{-d}a_{t}-1)a_t.
\end{align*}
We now take $t = 2^{-j}s$ and estimate each of the resulting three sums separately. 
Fix an integer $N \ge 1$. 
We first estimate
\begin{align*}
\sup_{s\in [1,2]}&\sup_{N\ge 1}\mathbb{E}\Big\|\sum  _{j=1} ^{N} \eps_{j} ((1+\lambda_{2^{-j+1}})^{-d}-1)a_{2^{-j+1}s}(A,B)f\Big\|^2 \\ &\lesssim 
\sup_{s\in [1,2]} \Bigl(\sum  _{j=1} ^{\infty} |(1+\lambda_{2^{-j+1}})^{-d}-1|
\|\exp(-2^{-j+1}sL)f \|\Bigr)^2 \\ &
\lesssim_d \Bigl(\sum  _{j=0} ^{\infty} 2^{-j}
\n f\n\Bigr)^2  \lesssim \n f\n^2 ,
\end{align*}
where we used the bound $(1+\lambda_{2^{-j}})^{-d}-1\lesssim_d 2^{-j}$ 
together with the uniform boundedness of the operators $\exp(-tL) = P(t)$. 
Similarly,
$$ \sup_{s\in [1,2]} \sup_{N\ge 1}\mathbb{E}\Big\|\sum_{j=1}^{N} \eps_{j} ((1+\lambda_{2^{-j}})^{-d}-1)a_{2^{-j}s}(A,B)f\Big\|^2 \lesssim \n f\n^2.$$
To prove \eqref{eq:sfe}, it therefore remains to show that
\begin{equation*}
\sup_{s\in [1,2]}\sup_{N\ge 1}\mathbb{E}\Big\|\sum_{j=1}^{N} \eps_{j} \wt b_{2^{-j}s}(A,B)f\Big\|^2\lesssim \n f\n^2 .
\end{equation*}
To this end we claim that the functions 
$$\wt\kappa_{N,\epsilon,s}:=\sum_{j=1}^N \epsilon_{j} \wt b_{2^{-j}s}$$
belong to the symbol class $S^0$, uniformly in $N$, $\eps\in \{\pm 1\}^N$, and $s\in [1,2]$.
Since by assumption $(A,B)$ has a bounded Weyl calculus of type $0$, this claim, once it has been proved, will prove the theorem.

We have
\begin{align*}
|\wt\kappa_{N,\epsilon,s}(x,\xi)| & 
\le  \sum  _{j=1} ^{N} |\wt b_{2^{-j}s}(x,\xi)|
\\ & =  \sum_{j=1} ^N 
\Bigl|\exp(-\lambda_{2^{-j+1}s}(|x|^{2}+|\xi|^{2}))-\exp(-\lambda_{2^{-j}s}(|x|^{2}+|\xi|^{2}))\Bigr| \\
& = \sum_{j=1} ^N 
\bigl(\exp(-\lambda_{2^{-j+1}s}(|x|^{2}+|\xi|^{2}))-\exp(-\lambda_{2^{-j}s}(|x|^{2}+|\xi|^{2}))\bigr) \\
& \leq 1
\end{align*}
using a telescoping argument in the last step.
In combination with Lemma \ref{lem:exp-symb} (which remains true if we replace the 
summation $\sum_{j=1}^N$ by $\sum_{j=0} ^{N-1}$), this proves the claim.
\end{proof}
 
\begin{remark}\label{rem:no-shift}
For the standard pair $(Q,P)$ on $L^p(\R^d;X)$ with $1<p<\infty$ and $X$ any UMD space, the operator $\frac12(Q^2+P^2)-\frac12d$ is
$R$-sectorial and has a bounded $H^\infty$-calculus for any angle $\theta\in (0,\pi)$. Following the lines of \cite{MaaNee}, this follows from the results of \cite{BCCFR} (see also \cite{AFT}). 
\end{remark}

Using the theory developed in \cite{KriegW}, we can extend the functional calculus of $L$ from $H^{\infty}(\Sigma_{\theta})$ to an appropriate H\"ormander class. We 
thus obtain a calculus in one of their $\mathcal{H}_{2} ^{\beta}$ classes. As pointed out in \cite[Remark 3.3]{KriegW}, these classes are slightly larger (but more complicated to define) than the standard H\"ormander classes of functions $f \in C^{m}[0,\infty)$  satisfying
$$
\underset{R>0}{\sup} \ R^{2k} \int _{\frac12 R} ^{2R} |f^{(k)}(t)|^{2} \,\frac{{\rm d}t}{R}<\infty,
$$
for all $k=0, \dots , m$. Note that the latter class contains all smooth functions
with compact support in $(0,\infty)$.

\begin{theorem}\label{thm:Horm}
 If $(A,B)$ is a Weyl pair on a UMD lattice $X$ with a bounded Weyl calculus of type $0$, then
 $L=\frac12(A^{2}+B^{2})-\frac12d$ has an $R$-bounded $\mathcal{H}_{2} ^{2d+\frac{1}{2}}$-H\"ormander calculus. 
\end{theorem}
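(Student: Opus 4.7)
The plan is to invoke the Kriegler--Weis criterion from \cite{KriegW} with the operator $L$, using as inputs two results already established in earlier sections: the boundedness of the $H^\infty(\Sigma_\theta)$-calculus of $L$ for every $\theta \in (0,\pi)$ (Theorem \ref{thm:HinftyL}), and the precise $R$-sectoriality bound
\[ \mathscr{R}\bigl(\{\exp(-zL):\,z\ne 0,\,|\arg z|<\theta\}\bigr) \lesssim \frac{1}{(\tfrac12\pi-\theta)^{2d}} \]
from Theorem \ref{thm:L-R-sect}. The Kriegler--Weis machinery is designed to upgrade a bounded holomorphic calculus of angle $0$ on a UMD lattice to an $R$-bounded $\mathcal{H}_2^{\beta}$-H\"ormander calculus, the exponent $\beta$ being determined by the rate of explosion of the $R$-bound of the analytic semigroup as one approaches the imaginary axis.

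First, I would verify that $L$ satisfies the standing hypotheses of \cite{KriegW}: $L$ is a densely defined, injective sectorial operator with dense range on the UMD lattice $X$ (densely defined and sectorial of angle $0$ by Theorems \ref{thm:A2B2} and \ref{thm:L-R-sect}; injectivity and dense range follow from Proposition \ref{prop:nullrange} after passing to $\overline{\Ran(L)}$, on which we work, since on $\Ker(L)$ the spectral multiplier calculus is trivial because every bounded Borel function acts as multiplication by $m(0)$). Theorem \ref{thm:HinftyL} gives the bounded $H^\infty$-calculus of angle $0$, which is the qualitative prerequisite in \cite{KriegW}.

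Second, I would feed the quantitative bound from Theorem \ref{thm:L-R-sect} into the Kriegler--Weis framework. The polynomial blow-up with exponent $2d$ in $(\tfrac12\pi-\theta)^{-1}$ is exactly the kind of growth estimate that the criterion in \cite{KriegW} converts into a H\"ormander order $\beta$; more precisely, the growth rate $\alpha=2d$ combined with the extra half-derivative margin inherent in the $\mathcal{H}_2^\beta$ scale (due to passing from $L^\infty$ to $L^2$-norms of dyadic pieces in the Paley--Littlewood decomposition used in \cite{KriegW}) yields $\beta = 2d + \tfrac12$. The $R$-boundedness (as opposed to uniform boundedness) of our family $\{(\tfrac12\pi-\theta)^{2d}\exp(-zL)\}$ is precisely what upgrades the output from a bounded calculus to an $R$-bounded one.

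The main technical point I would need to pin down is the exact matching between our semigroup bound and the hypothesis formulated in \cite{KriegW} (which is usually phrased in terms of $R$-bounds for the analytic extension of the semigroup, or equivalently for resolvents along rays, with a specified polynomial blow-up). Once that translation is made, the conclusion $\mathcal{H}_2^{2d+1/2}$ is immediate. A secondary, minor issue is to confirm that the kernel/range splitting from Proposition \ref{prop:nullrange} is respected by the spectral multipliers $m(L)$ (a bounded Borel function $m$ acts as $m(0)$ on $\Ker(L)$ and via the Kriegler--Weis calculus on $\overline{\Ran(L)}$), which is a routine direct-sum argument. I expect no genuine obstacle beyond this bookkeeping, since both required inputs have already been proved with the sharp constants needed.
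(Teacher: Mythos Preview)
Your proposal is correct and follows essentially the same approach as the paper: invoke the Kriegler--Weis criterion (specifically \cite[Theorem 7.1]{KriegW}) using Theorem~\ref{thm:HinftyL} for the bounded $H^\infty$-calculus of angle $0$ and the quantitative $R$-bound from Theorem~\ref{thm:L-R-sect} as inputs. The one ingredient you do not mention, but which the paper notes explicitly, is that the Kriegler--Weis theorem requires the underlying space to have property~$(\alpha)$; UMD lattices do have this property (see \cite[Theorem 7.5.20]{HNVW2}), so this is a harmless omission, but you should record it when writing up the proof.
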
 

\begin{proof}
By Theorems  \ref{thm:L-R-sect}, \ref{thm:HinftyL}, the assumptions of \cite[Theorem 7.1]{KriegW} are satisfied (note that UMD lattices have the required property $(\alpha)$ by \cite[Theorem 7.5.20; see the Notes to this section for the terminology]{HNVW2}).
\end{proof}

\section{Open problems}
As explained in the introduction, this paper is mostly meant as a foundation for the development of pseudo-differential calculi in ``rough'' settings. We nonetheless think that the general theory of Weyl pairs presented here is also worth developing further in its own right. This would include solving the following problems: 

\begin{enumerate}
\item[\rm(1)]
To extend Mauceri's results on twisted convolutions \cite{mauceri80} to Bochner spaces $L^{p}(\R^{2d};X)$, where $X$ is UMD and $p\in (1,\infty)$. 
\end{enumerate}
An affirmative answer would automatically solve the next problem.  

\begin{enumerate}
\item[\rm(2)]  To extend Theorem \ref{thm:type0} to general Weyl pairs $(A,B)$.  

\end{enumerate}
As observed in Remark \ref{rem:no-shift},
for standard pairs on $L^p(\R^d;X)$ with $1<p<\infty$, the conclusions of
Theorems \ref{thm:L-R-sect}, \ref{thm:HinftyL}, and \ref{thm:Horm} hold for arbitrary UMD Banach spaces $X$. 
In the three theorems for general Weyl pairs, the lattice structure of $X$ was only used through the proof of Lemma \ref{lem:untwist-Rbd}.
Thus one may pose the following problem:

\begin{enumerate}
\item[\rm(3)]  To decide whether 
Theorems \ref{thm:L-R-sect}, \ref{thm:HinftyL}, and \ref{thm:Horm} hold for arbitrary UMD spaces $X$. 
\end{enumerate}
An affirmative answer to the first problem could possibly also solve the third problem, 
since it might pave the way for an alternative proof via transference.
For these three problems, studying the particular case where $X$ is a non-commutative $L^p$-space would be particularly interesting, yet potentially much simpler than the general case (thanks to the availability of both domination and extrapolation techniques).

\begin{enumerate}
\item[\rm(4)]  To prove an analogue of the Stone--von Neumann uniqueness theorem for Weyl pairs.
\end{enumerate}

\end{document}